\documentclass{article}


\usepackage{epsfig}
\usepackage{amsmath,amsthm,amsfonts,amssymb,euscript}
\usepackage{graphicx}
\usepackage{pgf}
\usepackage{caption}
\usepackage{subcaption}
\usepackage{tikz,tkz-tab}
\usepackage{array,multirow,makecell}
\usepackage{tabularx}
\usepackage{hyperref}
\usepackage{makeidx}
\usepackage{geometry}
\usepackage{authblk}
\geometry{left=3cm,right=3cm,top=2.1cm}

\def\XXint#1#2#3{{\setbox0=\hbox{$#1{#2#3}{\int}$}
    \vcenter{\hbox{$#2#3$}}\kern-.5\wd0}}

\def\longrightharpoonup{\DOTSB\relbar\joinrel\rightharpoonup}

\def\RR{\mathbb R}
\def\ZZ{\mathbb Z}

\def\11{\mathbf{1}}
\begin{document}
\numberwithin{equation}{section}
\newtheorem{theoreme}{Theorem}[section]
\newtheorem{proposition}[theoreme]{Proposition}
\newtheorem{remarque}[theoreme]{Remark}
\newtheorem{lemme}[theoreme]{Lemma}
\newtheorem{corollaire}[theoreme]{Corollary}
\newtheorem{definition}[theoreme]{Definition}

\title{Homogenization of the Poisson equation in a non-periodically perforated domain}
\author[1]{X. Blanc}
\author[1]{S. Wolf}
\affil[1]{{\footnotesize Universit\'e de Paris, Sorbonne Universit\'e, CNRS, Laboratoire Jacques-Louis Lions, F-75013 Paris}}

\maketitle
\begin{abstract}
We study the Poisson equation in a perforated domain with homogeneous Dirichlet boundary conditions. The size of the
perforations is denoted by $\varepsilon>0$, and is proportional to the distance between neighbouring
perforations. In the periodic case, the homogenized problem (obtained in the limit $\varepsilon\to 0$) is well understood (see \cite{lions-1980}). We extend
these results to a non-periodic case which is defined as a localized deformation of the periodic setting. We propose
geometric assumptions that make precise this setting, and we prove results which extend those of the periodic
case: existence of a corrector, convergence to the homogenized problem, and two-scale expansion. 
\end{abstract}

\tableofcontents

\newpage

\section{Introduction}
\label{Introduction}

In this article, we study the following problem: 
\begin{equation}
\label{poisson}
\begin{cases}
\begin{aligned}
- \Delta u_{\varepsilon} & = f \ \ \text{in} \ \ \Omega_{\varepsilon} \\
u_{\varepsilon} & = 0 \ \ \text{on} \ \ \partial \Omega_{\varepsilon},
\end{aligned}
\end{cases}
\end{equation}
where $f$ is a given smooth, compactly-supported function (this assumption may be relaxed, as we will see below in
Remarks~\ref{rk1} and \ref{rk3}),
and $\Omega_\varepsilon$ is a perforated domain that we make precise in the following. Our aim is to study the
asymptotic behaviour of $u_\varepsilon$ as $\varepsilon\to 0$, deriving a two-scale expansion and proving convergence
estimates. In \cite{lions-1980}, these results were obtained in the periodic case (that is, if the perforations are a
periodic array of period $\varepsilon$). Here, we adapt this work to a non-periodic setting. Using Assumptions \textbf{(A1)}
and \textbf{(A2)} below, which are inspired from the setting developed in \cite{BLLMilan,BLLcpde,BLLfutur1}, we first prove
the existence of a corrector (Theorem~\ref{theocor} below). While this result is trivial in the periodic case, it
is not in the present setting. Then, we prove the convergence result stated in Theorem~\ref{theo}, which is a generalization of
\cite[Theorem 3.1]{lions-1980} to the present setting. We also prove such a convergence in $L^\infty$ norm
(Theorem~\ref{theoinfini} below), a result which was not proved in \cite{lions-1980}. The crucial point in order to
prove such results is a Poincar\'e inequality with an explicit scaling in $\varepsilon$, for functions vanishing in
the perforations, as in the periodic case (see Lemma~\ref{Poincmic} below in the periodic case, and
Theorem~\ref{Poincmic2} in the non-periodic case).

\medskip

To our knowledge, the first contribution on the homogenization of elliptic problems in perforated domains is \cite{cioranescu-sjp}.
The setting is periodic, the equation is elliptic in divergence form, and the Dirichlet condition on the boundary
of the holes is not $0$. This implies that the limit is not trivial, in contrast to \cite{lions-1980}, where, as we
will see below, $u_\varepsilon (x)\approx \varepsilon^2 f(x)w(x/\varepsilon)$, for some periodic function $w$. The case of
Neumann boundary conditions was studied in \cite{cioranescu-murat-1,cioranescu-murat-2} and  \cite{cioranescu-donato-1988}, where the geometry is
periodic, but the holes are assumed to be asymptotically small compared to the period. In this case, an important tool to study
the problem is the so-called extension operator, which is studied in details in \cite{acerbi}. In
\cite{damlamian-donato-2002}, sufficient conditions on periodic holes are given which allow for homogenization. In
\cite{cioranescu-donato-zaki-cras,cioranescu-donato-zaki}, the case of Robin boundary conditions is addressed, with
the help of the periodic unfolding method
\cite{cioranescu-damlamian-griso,cioranescu-damlamian-donato-griso-zaki}. The case of eigenvalue problems was
considered in \cite{vanninathan}.

In \cite{briane-damlamian-donato-1998}, a formalization in link with the H-convergence was proposed under general assumptions on the perforations. However, the computations are less explicit than in our setting. 
 A general (non-periodic) perforated domain was also considered in \cite{nguetseng-2004}: this setting requires that, among other assumptions, the same perforation is reproduced in some cells of a periodic grid (but not necessarily all of them).






\medskip

In the following subsection, we recall the results proved in the periodic setting in \cite{lions-1980}. Then, in
Subsection~\ref{sec:non-periodic-case}, we study the case of a locally perturbed periodic geometry. We give
conditions on the perforations (inspired from \cite{BLLMilan,BLLcpde,BLLfutur1}), which imply that, away from the defect, the perforations become periodic, and which allow
to prove convergence results similar to those of the periodic case. In Section~\ref{sec:results}, we give the main
results of the article, together with some remarks and comments. Section~\ref{sec:poincare} is devoted to a
Poincar\'e-type inequality which is crucial in our proof. Finally, Section~\ref{sec:proofs} is devoted to the
proofs of the results stated in Section~\ref{sec:results}.

\subsection{The periodic case}
\label{sec:periodic-case}

We start with some notations. We consider the $d-$dimensional unit cube $Q = ]0,1[^d$ with $d \geq 2$. Let $\mathcal{O}_0^{\mathrm{per}}$ be an open
subset of $Q$ such that $\mathcal{O}_0^{\mathrm{per}} \subset \subset Q$ and $Q \setminus
\overline{\mathcal{O}^{\mathrm{per}}_0}$ is connected. For simplicity, we choose to impose that $\overline{\mathcal{O}_0^{\mathrm{per}}}$ cannot
intersect the boundary of $Q$. Suppose, for elliptic regularity, that $\mathcal{O}_0^{\mathrm{per}}$ is a $C^{1,\gamma}$
domain, for some $0 < \gamma < 1$. 

We set, for all $k \in \mathbb{Z}^d$,

\begin{equation}\label{O_k^per}
\mathcal{O}_k^{\mathrm{per}} = \mathcal{O}^{\mathrm{per}}_0 + k \ \ \mathrm{and}  \ \ \mathcal{O}^{\mathrm{per}} = \bigcup_{k \in \mathbb{Z}^d} \mathcal{O}_k^{\mathrm{per}}. 
\end{equation}

If $k \in \mathbb{Z}^d$, we have $\mathcal{O}_k^{\mathrm{per}} \subset\subset Q_k$ where $Q_k := Q + k$. 





Given $\varepsilon > 0$, denote by $\mathcal{O}_{\varepsilon}^{\mathrm{per}}$ the set of $\varepsilon-$perforations :
\begin{equation}\label{O_epsilon^per}
\mathcal{O}_{\varepsilon}^{\mathrm{per}} = \bigcup_{k \in \mathbb{Z}^d} \varepsilon \mathcal{O}_k^{\mathrm{per}} = \bigcup_{k \in \mathbb{Z}^d} \varepsilon (\mathcal{O}_0^{\mathrm{per}} + k) = \varepsilon \mathcal{O}^{\mathrm{per}}.
\end{equation}

We now define some useful functional spaces :
\begin{equation}
\label{eq:h1per}
H^{1,\mathrm{per}}(Q \setminus \overline{\mathcal{O}_0^{\mathrm{per}}} ):= \left\{ u \in H^{1}_{\mathrm{loc}}(\mathbb{R}^d \setminus \overline{\mathcal{O}^{\mathrm{per}}}) \ \mathrm{s.t.} \ u  \ \mathrm{and} \ \partial_i u \ \mathrm{are} \ Q-\mathrm{periodic} \ \mathrm{for} \ \mathrm{all} \ i \in \{1,...,d\} \right\}
\end{equation}
and 
\begin{equation}
\label{eq:h1pernulle}
H^{1,\mathrm{per}}_0(Q \setminus \overline{\mathcal{O}_0^{\mathrm{per}}} ) := \left\{ u \in H^{1,\mathrm{per}}(Q \setminus \overline{\mathcal{O}_0^{\mathrm{per}}}) \ \mathrm{s.t.} \ u_{|\partial \mathcal{O}_0^{\mathrm{per}}} = 0 \right\}.
\end{equation}
The two spaces defined by \eqref{eq:h1per} and \eqref{eq:h1pernulle} are Hilbert spaces for the norm 
$$\| u \|_{H^{1,\mathrm{per}}(Q \setminus \overline{\mathcal{O}^{\mathrm{per}}_0})} := \left(\int_{Q \setminus \overline{\mathcal{O}^{\mathrm{per}}_0}} |u|^2 + \int_{Q \backslash \overline{\mathcal{O}^{\mathrm{per}}_0}} |\nabla u|^2 \right)^{1/2}.$$
In the sequel, a function of $H^{1,\mathrm{per}}(Q \setminus \overline{\mathcal{O}^{\mathrm{per}}_0})$ or of $H^{1,\mathrm{per}}_0(Q \setminus \overline{\mathcal{O}^{\mathrm{per}}_0})$ will naturally be extended to $\mathbb{R}^d \setminus \overline{\mathcal{O}^{\mathrm{per}}}$ by periodicity.

All along the paper, we will denote the $H^1-$semi-norm on a set $V$ by $|\cdot|_{H^1(V)}$:
$$|u|_{H^1(V)} := \left( \int_V | \nabla u|^2 \right)^{1/2}.$$

\vspace{0.4cm}

Let $\Omega$ be a bounded, open and connected domain of $\mathbb{R}^d$. For $\varepsilon > 0$, denote by $\Omega_{\varepsilon} := \Omega \setminus \overline{\mathcal{O}_{\varepsilon}^{\mathrm{per}}}$. Note that $\Omega_{\varepsilon}$ is open and bounded but may not be connected. 

One has
\begin{equation}
\label{domain}
\Omega_{\varepsilon} = \Omega \cap \left( \bigcup_{k \in \mathbb{Z}^d} \varepsilon(Q_k \setminus \overline{\mathcal{O}^{\mathrm{per}}_k}) \right).
\end{equation}
Figure \ref{figper22} shows the set $\Omega_{\varepsilon}$ for two values $\varepsilon_0$ and $\varepsilon_1$ satisfying $\varepsilon_0 > \varepsilon_1$. The set $\Omega_{\varepsilon}$ is colored in light grey.
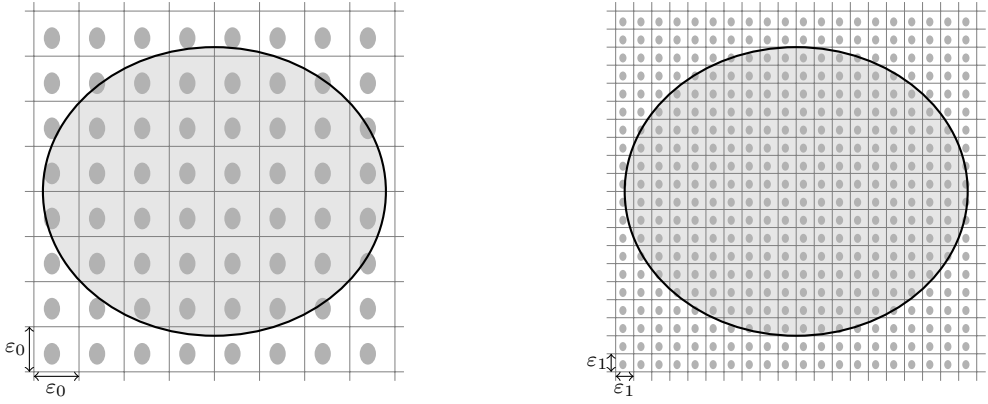
\begin{figure}
\centering
\begin{subfigure}{.5\textwidth}
  \centering
     \begin{tikzpicture}[scale=.12]\footnotesize
 \pgfmathsetmacro{\xone}{-21}
 \pgfmathsetmacro{\xtwo}{21}
 \pgfmathsetmacro{\yone}{-21}
 \pgfmathsetmacro{\ytwo}{21}
 
\fill[gray!20] (0,0) ellipse (19cm and 16cm);

\begin{scope}<+->;
  \draw[step=5cm,gray,very thin] (\xone,\yone) grid (\xtwo,\ytwo);
\end{scope}

\foreach \x in {-20,-15,-10,-5,0,5,10,15}
    \foreach \y in {-20,-15,-10,-5,0,5,10,15}
        \fill[gray!60] (\x+2,\y+2) ellipse (0.9cm and 1.2cm);

\draw[<->] (-20,-20.5) -- (-15,-20.5);
\draw (-17.5,-22) node[]{$\varepsilon_0$};
\draw[<->] (-20.5,-20) -- (-20.5,-15);
\draw (-22,-17.5) node[]{$\varepsilon_0$};

\draw[black,thick] (0,0) ellipse (19cm and 16cm);

\end{tikzpicture}

\end{subfigure}%
\begin{subfigure}{.5\textwidth}
  \centering
\begin{tikzpicture}[scale=.12]\footnotesize
 \pgfmathsetmacro{\xone}{-21}
 \pgfmathsetmacro{\xtwo}{21}
 \pgfmathsetmacro{\yone}{-21}
 \pgfmathsetmacro{\ytwo}{21}
 
\fill[gray!20] (0,0) ellipse (19cm and 16cm);

\begin{scope}<+->;
  \draw[step=2cm,gray,very thin] (\xone,\yone) grid (\xtwo,\ytwo);
\end{scope}

\foreach \x in {-20,-18,-16,-14,-12,-10,-8,-6,-4,-2,0,2,4,6,8,10,12,14,16,18}
    \foreach \y in {-20,-18,-16,-14,-12,-10,-8,-6,-4,-2,0,2,4,6,8,10,12,14,16,18}
        \fill[gray!60] (\x+0.8,\y+0.8) ellipse (0.4cm and 0.5cm);

\draw[<->] (-20,-20.5) -- (-18,-20.5);
\draw (-19,-22) node[]{$\varepsilon_1$};
\draw[<->] (-20.5,-20) -- (-20.5,-18);
\draw (-22,-19) node[]{$\varepsilon_1$};

\draw[black,thick] (0,0) ellipse (19cm and 16cm);
\end{tikzpicture}
\label{figper1/20}
\end{subfigure}

\caption{The periodic set for two choices of $\varepsilon$, $\varepsilon_0$ and $\varepsilon_1 = \varepsilon_0/2.5$.}
\label{figper22}
\end{figure}
We are interested in the Poisson problem~\eqref{poisson}. As we already mentionned, the source term $f$ is
supposed, as in \cite{lions-1980}, smooth and compactly supported in $\Omega$. In fact, (see Remark 3.3 of
\cite{lions-1980}), it is sufficient to suppose that $f \in C^{m-2}(\overline{\Omega})$ and that $D^p f_{|\partial
  \Omega} = 0$ for all $|p| \leq m-2$, where $m$ is the order of the two scale expansion of $u_{\varepsilon}$. As
pointed out in \cite{lebris-legoll-lozinski-2014}, the assumptions on $f$ can be weakened further (see
Remark~\ref{rk1} below).

By a simple application of the Lax-Milgram Lemma, we have existence and uniqueness of a solution $u_\varepsilon$ to \eqref{poisson}.
In order to study the dependance of $u_\varepsilon$ on $\varepsilon$, we will need the following Lemma which is a Poincar\'e-type inequality
in $H^1_0(\Omega_{\varepsilon})$. It is proved in \cite[Lemma 1]{tartar} (see also \cite[Proposition
3.1]{bourgeat-tapiero}). A crucial point in the non-periodic case will be to have a similar result, with the same
scaling in $\varepsilon$. This is why we use Assumption \textbf{(A2)}, which allows to prove Lemma~\ref{Poincmic2} below.

\begin{lemme}[Lemma 1 of \cite{tartar}]
\label{Poincmic}
There exists a constant $C_0> 0$ independent of $\varepsilon$ such that 
$$\forall u \in H^1_0(\Omega_{\varepsilon}), \ \| u \|_{L^2(\Omega_{\varepsilon})} \leq C_0 \varepsilon \| \nabla u \|_{L^2(\Omega_{\varepsilon})}.$$ 
\end{lemme}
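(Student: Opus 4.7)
The strategy is to reduce the global Poincar\'e inequality on $\Omega_{\varepsilon}$ to a single-cell Poincar\'e inequality, and to recover the $\varepsilon$ factor by a scaling argument. First, I would extend $u \in H^1_0(\Omega_{\varepsilon})$ by zero outside $\Omega_{\varepsilon}$ to a function $\tilde u \in H^1(\mathbb{R}^d)$, preserving both $\|u\|_{L^2}$ and $\|\nabla u\|_{L^2}$. Since $u$ vanishes in the trace sense on $\partial\Omega$ and on $\Omega \cap \partial\mathcal{O}^{\mathrm{per}}_{\varepsilon}$, one checks that $\tilde u \equiv 0$ on $(\mathbb{R}^d \setminus \Omega) \cup \overline{\mathcal{O}^{\mathrm{per}}_{\varepsilon}}$. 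In particular, for every $k \in \mathbb{Z}^d$ the function $\tilde u$ vanishes on the whole hole $\varepsilon \mathcal{O}^{\mathrm{per}}_{k}$, whether this hole lies inside, outside, or straddles $\partial \Omega$.

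The local ingredient is a cell-wise Poincar\'e inequality: there exists $C > 0$, depending only on $\mathcal{O}^{\mathrm{per}}_0$, such that
\begin{equation*}
\|v\|_{L^2(Q)} \leq C \, \|\nabla v\|_{L^2(Q)} \qquad \text{for every } v \in H^1(Q) \text{ with } v \equiv 0 \text{ on } \mathcal{O}^{\mathrm{per}}_0.
\end{equation*}
This is a standard consequence of a compactness/contradiction argument: a sequence $v_n$ violating it, after normalization, would converge in $L^2(Q)$ via Rellich to a constant (since its gradient tends to zero), but the condition $v_n \equiv 0$ on a set of positive measure forces that constant to be zero, contradicting $\|v_n\|_{L^2(Q)} = 1$. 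By translation invariance the same constant $C$ works with $Q$ and $\mathcal{O}^{\mathrm{per}}_0$ replaced by $Q_k$ and $\mathcal{O}^{\mathrm{per}}_k$.

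The conclusion is then a scaling-and-summation argument. For each $k \in \mathbb{Z}^d$, I would define $v_k(y) := \tilde u(\varepsilon y)$ on $Q_k$, which satisfies $v_k \equiv 0$ on $\mathcal{O}^{\mathrm{per}}_k$. Applying the cell inequality to $v_k$ and performing the change of variables $x = \varepsilon y$ yields
\begin{equation*}
\|\tilde u\|_{L^2(\varepsilon Q_k)}^2 = \varepsilon^d \|v_k\|_{L^2(Q_k)}^2 \leq C^2 \varepsilon^d \|\nabla v_k\|_{L^2(Q_k)}^2 = C^2 \varepsilon^2 \|\nabla \tilde u\|_{L^2(\varepsilon Q_k)}^2.
\end{equation*}
Summing over the (finitely many) cells $k$ for which $\varepsilon Q_k \cap \Omega \neq \emptyset$, and using that the cells $\varepsilon Q_k$ tile $\mathbb{R}^d$ up to a set of measure zero, gives the announced inequality with $C_0 = C$.

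The step I expect to be the main subtlety is the first one: verifying that the extension by zero genuinely produces an $H^1(\mathbb{R}^d)$ function vanishing on every cell-hole $\varepsilon \mathcal{O}^{\mathrm{per}}_k$, including those cells that meet $\partial \Omega$. This combines the zero extension inside the perforations (licit because the trace of $u$ on $\Omega \cap \partial \mathcal{O}^{\mathrm{per}}_{\varepsilon}$ is zero) and the zero extension outside $\Omega$ (licit because the trace of $u$ on $\partial \Omega \setminus \overline{\mathcal{O}^{\mathrm{per}}_{\varepsilon}}$ is zero); once both are combined, the cell-by-cell estimate is purely mechanical.
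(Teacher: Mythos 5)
Your proof is correct. Note first that the paper does not prove Lemma~\ref{Poincmic} at all: it simply cites Lemma~1 of Tartar. The relevant comparison is therefore with the proof the paper gives of its non-periodic generalization, Lemma~\ref{Poincmic2}, which follows exactly your skeleton (extend by zero, prove a Poincar\'e inequality on a single reference cell for functions vanishing on the hole, transfer it to every cell, rescale by $\varepsilon$ and sum). The one genuine difference is how the cell inequality is obtained. You use a soft compactness/contradiction argument via Rellich, which produces a constant $C$ that exists but is not explicit; this is perfectly adequate in the periodic case, where translation invariance makes the single constant uniform over all cells. The paper instead proves Theorem~\ref{unifpoinc}, a constructive version obtained by integrating $v$ along segments joining each point of $Q$ to a box $\mathcal{R}$ contained in the hole, which yields the explicit constant $d/|\mathcal{R}|$. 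That explicitness is irrelevant for Lemma~\ref{Poincmic} itself, but it is precisely what lets the authors handle the non-periodic setting: there the holes $\mathcal{O}_k$ vary with $k$, compactness arguments would a priori give $k$-dependent constants, and uniformity is recovered because Assumption \textbf{(A2)} guarantees a box of volume bounded below inside every hole. So your route is the natural and correct one for the periodic statement, while the paper's quantitative variant is the one that generalizes.
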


This allows to prove Lemma~2 of \cite{tartar}:
\begin{lemme}[Lemma 2 of \cite{tartar}]
\label{lemimpt}
The solution $u_{\varepsilon}$ of Problem \eqref{poisson} satisfies the estimates
\begin{equation}
\| u_{\varepsilon} \|_{L^2(\Omega_{\varepsilon})} \leq C \varepsilon^2 \ \ \ \ \mathrm{and} \ \ \ \ \| u_{\varepsilon} \|_{H^1_0(\Omega_{\varepsilon})} \leq C \varepsilon,
\end{equation}
where $C$ is a constant independent of $\varepsilon$.
\end{lemme}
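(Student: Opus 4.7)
The plan is to run a standard energy estimate, using $u_\varepsilon$ itself as a test function in the variational formulation of~\eqref{poisson}, and then to close the estimate by invoking the scaled Poincaré inequality of Lemma~\ref{Poincmic} twice. First, since $u_\varepsilon \in H^1_0(\Omega_\varepsilon)$ and $-\Delta u_\varepsilon = f$ in $\Omega_\varepsilon$, integration by parts (or equivalently, the definition of the weak solution) yields
\[
\int_{\Omega_\varepsilon} |\nabla u_\varepsilon|^2 \;=\; \int_{\Omega_\varepsilon} f\,u_\varepsilon.
\]
Applying Cauchy--Schwarz to the right-hand side and using $\|f\|_{L^2(\Omega_\varepsilon)} \leq \|f\|_{L^2(\Omega)}$ (since $f$ is fixed, smooth and compactly supported in $\Omega$), we obtain
\[
|u_\varepsilon|_{H^1(\Omega_\varepsilon)}^2 \;\leq\; \|f\|_{L^2(\Omega)}\,\|u_\varepsilon\|_{L^2(\Omega_\varepsilon)}.
\]

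Next, I would use Lemma~\ref{Poincmic} to bound $\|u_\varepsilon\|_{L^2(\Omega_\varepsilon)} \leq C_0\,\varepsilon\,|u_\varepsilon|_{H^1(\Omega_\varepsilon)}$. Substituting this into the previous inequality and dividing by $|u_\varepsilon|_{H^1(\Omega_\varepsilon)}$ (which we may assume nonzero, otherwise the conclusion is trivial) gives
\[
|u_\varepsilon|_{H^1(\Omega_\varepsilon)} \;\leq\; C_0\,\varepsilon\,\|f\|_{L^2(\Omega)},
\]
which is the second estimate. Re-injecting this bound into Lemma~\ref{Poincmic} yields
\[
\|u_\varepsilon\|_{L^2(\Omega_\varepsilon)} \;\leq\; C_0\,\varepsilon\,|u_\varepsilon|_{H^1(\Omega_\varepsilon)} \;\leq\; C_0^2\,\varepsilon^2\,\|f\|_{L^2(\Omega)},
\]
giving the first estimate with $C = C_0^2\,\|f\|_{L^2(\Omega)}$.

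There is no real obstacle here: the whole argument is a two-line energy estimate, and the only non-trivial ingredient, namely the $\varepsilon$-scaled Poincaré inequality, is supplied by Lemma~\ref{Poincmic}. The reason the statement matters is precisely that the Poincaré constant scales like $\varepsilon$ rather than being $O(1)$, which is what upgrades the usual $H^1_0$-bound $|u_\varepsilon|_{H^1} \leq C$ (valid on any bounded domain) to the sharper $O(\varepsilon)$ rate; correspondingly the $L^2$-bound gains an extra factor of $\varepsilon$. The same chain of inequalities will work verbatim in the non-periodic setting once Theorem~\ref{Poincmic2} is established, which is the reason such care is taken with the scaling in the Poincaré inequality.
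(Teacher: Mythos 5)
Your proof is correct and follows exactly the route the paper intends: the lemma is quoted from Tartar and its proof is the standard energy estimate (test with $u_\varepsilon$, Cauchy--Schwarz, then the $\varepsilon$-scaled Poincar\'e inequality of Lemma~\ref{Poincmic} applied twice), which is also precisely the argument the paper reproduces later for $\phi_\varepsilon$ in the proof of Theorem~\ref{theo}. Nothing is missing.
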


Using a two-scale expansion of the form (see \cite[Section 2]{lions-1980})
\begin{equation}
\label{2scale}
u_{\varepsilon}(x) = u_0^{\mathrm{per}} \left(x,\frac{x}{\varepsilon} \right) + \varepsilon u_1^{\mathrm{per}} \left(x, \frac{x}{\varepsilon} \right) + \varepsilon^2 u_2^{\mathrm{per}} \left(x, \frac{x}{\varepsilon} \right) + \varepsilon^3 u_3^{\mathrm{per}} \left(x, \frac{x}{\varepsilon} \right) + \cdots,
\end{equation}
where the functions $u_i^{\mathrm{per}}$ are defined on $\Omega \times (Q \setminus \overline{\mathcal{O}_0^{\mathrm{per}}})$, smooth and $Q-$periodic in the second variable, one proves that, at least formally,
\begin{equation}
  \label{eq:2-echelles}
  u_{\varepsilon}(x) = \varepsilon^2 w^{\mathrm{per}}\left( \frac{x}{\varepsilon} \right) f(x) + \cdots,
\end{equation}
where $w^{\mathrm{per}}$ is the periodic solution of 
\begin{equation}\label{eq:periodic_corrector}
\begin{cases}
\begin{aligned}
- \Delta w^{\mathrm{per}} & = 1, \\
w^{\mathrm{per}} & \in H^{1,\mathrm{per}}_0(Q \setminus \overline{\mathcal{O}^{\mathrm{per}}_0}).
\end{aligned}
\end{cases}
\end{equation}
We note that Problem \eqref{eq:periodic_corrector} is well-posed. Indeed, it suffices to apply Lax-Milgram's Lemma to the following variational form
$$\forall v \in H^{1,\mathrm{per}}_0(Q \setminus \overline{\mathcal{O}_0^{\mathrm{per}}}),  \quad \int_{Q \setminus \overline{\mathcal{O}_0^{\mathrm{per}}}} \nabla w^{\mathrm{per}} \cdot \nabla v = \int_{Q \setminus \overline{\mathcal{O}_0^{\mathrm{per}}}} v.$$

The following convergence result is proved in \cite[Theorem 3.1]{lions-1980} (take $m=2$ there).

\begin{theoreme}[Consequence of Theorem 3.1 of \cite{lions-1980}]
  Assume that $\mathcal{O}_0^{\mathrm{per}}$ is an open subset of $Q$ such that 
  $\mathcal{O}_0^{\mathrm{per}} \subset\subset Q$. Let $f \in \mathcal{D}(\Omega)$ and $u_{\varepsilon}$ be the solution to \eqref{poisson}. Then there exists a constant $C$ independent of $\varepsilon$ such that
\begin{equation} 
\label{CV}
\varepsilon^{-1}\left\| u_{\varepsilon} - \varepsilon^2 w^{\mathrm{per}}\left( \cdot / \varepsilon \right) f \right\|_{L^2(\Omega_{\varepsilon})} + \left| u_{\varepsilon} - \varepsilon^2 w^{\mathrm{per}}\left( \cdot / \varepsilon \right) f \right|_{H^1(\Omega_{\varepsilon})} \leq C \varepsilon^2,
\end{equation}
where $w^{\mathrm{per}} \in H^{1,\mathrm{per}}_0(Q \setminus \overline{\mathcal{O}^{\mathrm{per}}_0})$ is the unique function satisfying $- \Delta w^{\mathrm{per}} = 1$. 
\label{theoper}
\end{theoreme}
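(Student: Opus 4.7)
My plan is the standard energy method for two-scale expansions, applied to the first-order ansatz. Set $v_\varepsilon(x) = \varepsilon^2 w^{\mathrm{per}}(x/\varepsilon) f(x)$ and define the remainder $R_\varepsilon := u_\varepsilon - v_\varepsilon$. The first thing I would check is that $R_\varepsilon \in H^1_0(\Omega_\varepsilon)$: on $\partial \mathcal{O}^{\mathrm{per}}_\varepsilon$ this holds because $w^{\mathrm{per}}$ vanishes on $\partial \mathcal{O}_0^{\mathrm{per}}$, and on $\partial\Omega$ it holds because $f \in \mathcal{D}(\Omega)$ is compactly supported away from $\partial\Omega$. This is precisely where the hypothesis $f \in \mathcal{D}(\Omega)$ is used.

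Next I would compute the residual. Using $(\nabla_x)[w^{\mathrm{per}}(x/\varepsilon)] = \varepsilon^{-1}(\nabla w^{\mathrm{per}})(x/\varepsilon)$ and $-\Delta w^{\mathrm{per}} = 1$, a direct expansion gives
\begin{equation*}
-\Delta v_\varepsilon(x) = f(x) - 2\varepsilon (\nabla w^{\mathrm{per}})(x/\varepsilon)\cdot \nabla f(x) - \varepsilon^2 w^{\mathrm{per}}(x/\varepsilon)\, \Delta f(x),
\end{equation*}
so that $-\Delta R_\varepsilon = 2\varepsilon (\nabla w^{\mathrm{per}})(x/\varepsilon)\cdot \nabla f + \varepsilon^2 w^{\mathrm{per}}(x/\varepsilon) \Delta f$ in $\Omega_\varepsilon$. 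Testing this equation against $R_\varepsilon$ yields
\begin{equation*}
|R_\varepsilon|_{H^1(\Omega_\varepsilon)}^2 \le \bigl(2\varepsilon \|(\nabla w^{\mathrm{per}})(\cdot/\varepsilon) \nabla f\|_{L^2(\Omega_\varepsilon)} + \varepsilon^2 \|w^{\mathrm{per}}(\cdot/\varepsilon)\Delta f\|_{L^2(\Omega_\varepsilon)} \bigr)\, \|R_\varepsilon\|_{L^2(\Omega_\varepsilon)}.
\end{equation*}

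The key estimate to close the argument is that the rescaled corrector has bounded $L^2$ and gradient-$L^2$ norms on $\Omega_\varepsilon$. Since $f$ has compact support, a change of variables $y = x/\varepsilon$ on each perforated cell reduces the integrals to a sum over $O(\varepsilon^{-d})$ unit cells, each contributing $\varepsilon^d \|w^{\mathrm{per}}\|_{L^2(Q\setminus\overline{\mathcal{O}_0^{\mathrm{per}}})}^2$ (respectively for $\nabla w^{\mathrm{per}}$), so the total is $O(1)$, uniformly in $\varepsilon$. Combined with the Poincar\'e inequality of Lemma~\ref{Poincmic}, which yields $\|R_\varepsilon\|_{L^2(\Omega_\varepsilon)} \le C_0\varepsilon |R_\varepsilon|_{H^1(\Omega_\varepsilon)}$, the right-hand side above is bounded by $C\varepsilon^2 |R_\varepsilon|_{H^1(\Omega_\varepsilon)}$, giving $|R_\varepsilon|_{H^1(\Omega_\varepsilon)} \le C\varepsilon^2$, and then $\|R_\varepsilon\|_{L^2(\Omega_\varepsilon)} \le C\varepsilon^3$, which is exactly the announced bound.

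The whole argument is essentially mechanical once the ansatz is set up; the only genuinely nontrivial ingredient is the Poincar\'e inequality with the correct $\varepsilon$-scaling (Lemma~\ref{Poincmic}), which absorbs one factor of $\varepsilon$ when trading $\|R_\varepsilon\|_{L^2}$ for $|R_\varepsilon|_{H^1}$ and is exactly what turns $\varepsilon$ into $\varepsilon^2$ on the right-hand side. This is also the step whose analogue in the non-periodic setting (Theorem~\ref{Poincmic2}) is the main technical contribution of the paper, which is why it is isolated and treated separately.
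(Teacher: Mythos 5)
Your proof is correct and follows exactly the argument the paper relies on: the paper cites \cite{lions-1980} for this periodic statement, but the proof it gives of the non-periodic analogue (Theorem~\ref{theo}) is precisely your computation — the residual equation $-\Delta R_\varepsilon = \varepsilon g_\varepsilon$ with $g_\varepsilon$ bounded in $L^2(\Omega_\varepsilon)$ uniformly in $\varepsilon$, an energy estimate, and the $\varepsilon$-scaled Poincar\'e inequality (Lemma~\ref{Poincmic}) to close. Your identification of that Poincar\'e inequality as the only nontrivial ingredient, and of its non-periodic counterpart as the paper's main technical point, is also accurate.
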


\begin{remarque}\label{rk1}
  If we assume in addition that $\mathcal{O}_0^{\mathrm{per}}$ is of class
$C^{1,\gamma}$ for some $0<\gamma<1$, then Theorem~\ref{theoper} still holds true under the weaker hypotheses $f \in H^2 \cap L^{\infty}(\Omega)$ and $f_{|\partial \Omega} =
0$ in the trace sense (see \cite[Appendix A.2]{lebris-legoll-lozinski-2014}). If we do not suppose that $f$ vanishes on $\partial \Omega$, $u_{\varepsilon} - \varepsilon^2 w( \cdot / \varepsilon) f$ does not vanish on $\partial \Omega$ either and we have the weaker estimate
$$\| u_{\varepsilon} - \varepsilon^2 w( \cdot / \varepsilon) f \|_{H^1(\Omega_\varepsilon)} \leq C \varepsilon^{3/2} \mathcal{N}(f),$$
where $\mathcal{N}(f) = \| f \|_{L^{\infty}} + \| \nabla f \|_{L^2} + \| \Delta f \|_{L^2}$. 
\end{remarque}

\subsection{The non-periodic case}
\label{sec:non-periodic-case}


We aim at extending the previous results to non-periodically perforated medium, in the special case of local
perturbations of the periodic structure.  More precisely, we define a reference periodic configuration by
\eqref{O_k^per}-\eqref{O_epsilon^per}-\eqref{domain}, and, for each $k \in \mathbb{Z}^d$, we denote by $\mathcal{O}_k$
the (non-periodic) perforation in the cell $k$. We assume that $\mathcal{O}_0^{\mathrm{per}}$ is of class
$C^{1,\gamma}$ for some $0<\gamma<1$. Our first assumption is that perforations should be sufficiently regular:\\

\noindent \textbf{(A1) For all $k \in \mathbb{Z}^d$, $\mathcal{O}_k$ is a $C^{1,\gamma}$ open set such that $\mathcal{O}_k \subset \subset Q_k$ and $Q_k \setminus \overline{\mathcal{O}_k}$ is connected.} \\

We now introduce geometric tools. For $\alpha > 0$, define the Minkowski-content of $\partial \mathcal{O}^{\mathrm{per}}_0$ (i.e a widened boundary of $\mathcal{O}_{0}^{\mathrm{per}}$) by the set
$$\mathcal{U}^{\mathrm{per}}_0(\alpha) := \{x \in \mathbb{R}^d \ \mathrm{s.t.} \ \mathrm{dist}(x,\partial \mathcal{O}^{\mathrm{per}}_0) < \alpha \}.$$
Similarly, if $k \in \mathbb{Z}^d$ and $\alpha > 0$, denote the set
$$\mathcal{U}_k^{\mathrm{per}}(\alpha) := \{x \in \mathbb{R}^d \ \mathrm{s.t.} \ \mathrm{dist}(x,\partial \mathcal{O}^{\mathrm{per}}_k) < \alpha \} = \mathcal{U}^{\mathrm{per}}_0(\alpha) + k.$$
Now (see Figure \ref{def} left), we define the reduction and the enlargement of $\mathcal{O}_k^{\mathrm{per}}$ by
$$\mathcal{O}_k^{\mathrm{per},-}(\alpha) := \mathcal{O}_k^{\mathrm{per}} \setminus \overline{\mathcal{U}_k^{\mathrm{per}}(\alpha)} \ \ \ \mathrm{and} \ \ \
\mathcal{O}_k^{\mathrm{per},+}(\alpha) := \mathcal{O}_k^{\mathrm{per}} \cup \mathcal{U}_k^{\mathrm{per}}(\alpha).$$
One has
$\displaystyle \mathcal{O}_k^{\mathrm{per},-}(\alpha) \subset \mathcal{O}_k^{\mathrm{per},+}(\alpha) \ \ \ \mathrm{and} \ \ \ \mathcal{U}_k^{\mathrm{per}}(\alpha) = \mathcal{O}_k^{\mathrm{per},+}(\alpha) \setminus \overline{\mathcal{O}_k^{\mathrm{per},-}(\alpha)}.$
We clearly have
\begin{equation}
\label{21}
\mathcal{O}_k^{\mathrm{per},+}(\alpha) = \{x \in \mathbb{R}^d \ \mathrm{s.t.} \ \mathrm{dist}(x,\mathcal{O}_k^{\mathrm{per}}) < \alpha \},
\end{equation}
and
\begin{equation}
\label{Omoins}
\mathcal{O}_k^{\mathrm{per},-}(\alpha) = \{x \in \mathcal{O}_k^{\mathrm{per}} \ \mathrm{s.t.} \ \mathrm{dist}(x, \partial \mathcal{O}_k^{\mathrm{per}}) > \alpha \}.
\end{equation}
Assumption \textbf{(A2)} reads :
\vspace{0.15cm}

\noindent \textbf{(A2) There exists a sequence $(\alpha_k)_{k \in \mathbb{Z}^d}$ such that $\alpha_k\geq 0,$  $(\alpha_k)_{k \in \mathbb{Z}^d} \in \ell^1(\mathbb{Z}^d)$ and
\begin{equation}
\label{HYP}
\forall k \in \mathbb{Z}^d, \  \mathcal{O}_k^{\mathrm{per},-}(\alpha_k) \subset \mathcal{O}_k \subset \mathcal{O}_k^{\mathrm{per},+}(\alpha_k)
\end{equation}
i.e $\mathcal{O}_k$ is between the enlargement and the reduction of $\mathcal{O}_k^{\mathrm{per}}$.}
\vspace{0.3cm}

\begin{remarque}
Assumption \textbf{(A2)} is a way to impose that the defect is localized. In \cite{BLLMilan,BLLcpde,BLLfutur1},
such an assumption is written as $a = a^{\rm per} + \widetilde a$, with $\widetilde a\in L^q(\RR^d)$, and $a^{\rm
  per}$ is periodic, where $a$ is the coefficient of the considered elliptic equation. Here, writing a similar condition, we
impose that the characteristic function of the perforations is a perturbation (i.e, a function in $L^q(\RR^d)$) of
the periodic case. For a characteristic function, being in $L^q(\RR^d)$ is equivalent to being in $L^1(\RR^d)$, hence the
condition \textbf{(A2)}.
\end{remarque}


 Note that if $\alpha_k$ is sufficiently large, $ \mathcal{O}_k^{\mathrm{per},-}(\alpha_k) = \emptyset$ and $Q_k
 \subset \mathcal{O}_k^{\mathrm{per},+}(\alpha_k)$. Thus, there is potentially no restriction on (a finite number
 of) $\mathcal{O}_k$. Figure \ref{def} (right) explains Assumption \textbf{(A2)}. \\
 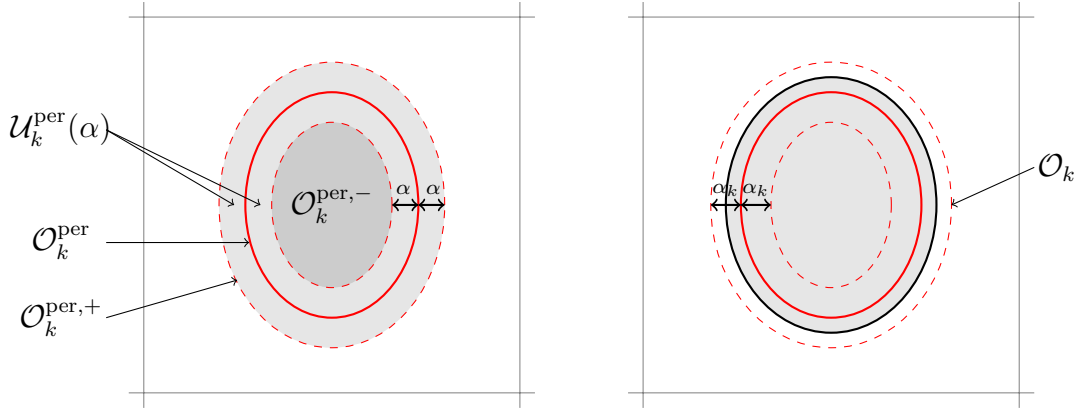
\begin{figure}[h!]
\centering
\begin{subfigure}{.5\textwidth}
  \centering
\begin{tikzpicture}[scale=1.]\footnotesize
 \pgfmathsetmacro{\xone}{-0.2}
 \pgfmathsetmacro{\xtwo}{5.2}
 \pgfmathsetmacro{\yone}{-0.2}
 \pgfmathsetmacro{\ytwo}{5.2}
\begin{scope}<+->;
  \draw[step=5cm,gray,very thin] (\xone,\yone) grid (\xtwo,\ytwo);
\end{scope}

\fill[gray!20] (2.5,2.5) ellipse (1.5cm and 1.9cm);
\fill[gray!40] (2.5,2.5) ellipse (0.8cm and 1.1cm);
\draw[red,thick] (2.5,2.5) ellipse (1.15cm and 1.5cm);
\draw[red,dashed] (2.5,2.5) ellipse (1.5cm and 1.9cm);
\draw[red,dashed] (2.5,2.5) ellipse (0.8cm and 1.1cm);
\draw[<->, thick] (3.65,2.5) -- (3.3,2.5);
\draw[<->, thick] (4,2.5) -- (3.65,2.5);

\draw (2.5,2.5) node[]{\large{$\mathcal{O}_k^{\mathrm{per},-}$}};
\draw (3.85,2.7) node[]{$\alpha$};
\draw (3.45,2.7) node[]{$\alpha$};
\draw[<-] (1.55,2.5) -- (-0.5,3.5); 
\draw[<-] (1.4,2) -- (-0.5,2);
\draw (-1.1,2) node[]{\large{$\mathcal{O}_k^{\mathrm{per}}$}};
\draw[<-] (1.24,1.5) -- (-0.5,1);
\draw (-1.1,1) node[]{\large{$\mathcal{O}_k^{\mathrm{per},+}$}};
\draw[<-] (1.2,2.5) -- (-0.5,3.5);
\draw (-1.1,3.5) node[]{\large{$\mathcal{U}_k^{\mathrm{per}}(\alpha)$}};

\end{tikzpicture}
\label{defa}
\end{subfigure}%
\begin{subfigure}{.5\textwidth}
  \centering
\begin{tikzpicture}[scale=1.]\footnotesize
 \pgfmathsetmacro{\xone}{-0.2}
 \pgfmathsetmacro{\xtwo}{5.2}
 \pgfmathsetmacro{\yone}{-0.2}
 \pgfmathsetmacro{\ytwo}{5.2}
\begin{scope}<+->;
  \draw[step=5cm,gray,very thin] (\xone,\yone) grid (\xtwo,\ytwo);
\end{scope}

\fill[gray!20] (2.5,2.5) ellipse (1.4cm and 1.7cm);
\draw[red,thick] (2.5,2.5) ellipse (1.2cm and 1.5cm);
\draw[black,thick] (2.5,2.5) ellipse (1.4cm and 1.7cm);
\draw[red,dashed] (2.5,2.5) ellipse (1.6cm and 1.9cm);
\draw[red,dashed] (2.5,2.5) ellipse (.8cm and 1.1cm);
\draw (1.5,2.7) node[]{$\alpha_k$};
\draw (1.1,2.7) node[]{$\alpha_k$};
\draw[<->, thick] (0.9,2.5) -- (1.3,2.5);
\draw[<->, thick] (1.3,2.5) -- (1.7,2.5);
\draw[<-] (4.1,2.5) -- (5.2,3);
\draw (5.5,3) node[]{\large{$\mathcal{O}_k$}};

\end{tikzpicture}
\label{defb}
\end{subfigure}%
\caption{On the left, illustration of $\mathcal{O}_k^{\mathrm{per}}$ (red), its widened boundary $\mathcal{U}_k^{\mathrm{per}}$, its enlargement $\mathcal{O}_k^{\mathrm{per},+}$ and its reduction $\mathcal{O}_k^{\mathrm{per},-}$ (grey). On the right, $\mathcal{O}_k$.}
\label{def}
\end{figure}

We define
\begin{equation}\label{eq:def_O}
\mathcal{O} := \bigcup_{k \in \mathbb{Z}^d} \mathcal{O}_k.
\end{equation}
We split the domain $\mathbb{R}^d \setminus \overline{\mathcal{O}}$ into two subdomains:
$$\mathbb{R}^d \setminus (\overline{\mathcal{O} \cup \mathcal{O}^{\mathrm{per}}}) \ \ \ \mathrm{and} \ \ \ \mathcal{O}^{\mathrm{per}} \setminus \overline{\mathcal{O}}.$$ Note that these domains are not necessarily connected. 

We split the boundary of the domain $\mathcal{O}^{\mathrm{per}} \setminus \mathcal{O}$ into two parts (the one surrounding $\mathcal{O}^{\mathrm{per}}$ and the one surrounding $\mathcal{O}$). For $k \in \mathbb{Z}^d$, we define
\begin{equation}
\label{gamma1}
\Gamma_1^{k} = \partial \mathcal{O}_k^{\mathrm{per}} \setminus \overline{\mathcal{O}_k} \ \ \ \mathrm{and} \ \ \ \Gamma_2^k = \partial \mathcal{O}_k \cap \mathcal{O}_k^{\mathrm{per}}\ \ \ \mathrm{s.t} \ \ \ \partial(\mathcal{O}^{\mathrm{per}}_k \setminus \overline{\mathcal{O}_k} ) = \Gamma_1^k \cup \Gamma_2^k.
\end{equation}
We denote by $\Gamma_1$ (resp. $\Gamma_2$) the union of the $\Gamma_1^k$ (resp. $\Gamma_2^k$), $k \in \mathbb{Z}^d$:
\begin{equation}
  \label{gamma12}
  \Gamma_1 = \bigcup_{k\in\ZZ^d} \Gamma_1^k, \quad \Gamma_2 = \bigcup_{k\in\ZZ^d} \Gamma_2^k.
\end{equation}

We also split the boundary of $\mathbb{R}^d \setminus (\overline{\mathcal{O} \cup \mathcal{O}^{\mathrm{per}}})$ into two parts. Write 
$\displaystyle
\partial(\mathbb{R}^d \setminus (\overline{\mathcal{O} \cup \mathcal{O}^{\mathrm{per}}})) = \partial(\mathcal{O} \cup \mathcal{O}^{\mathrm{per}}),
$
and define for $k \in \mathbb{Z}^d$ 
\begin{equation}
\label{gamma3}
\Gamma_3^k = \partial \mathcal{O}_k \setminus \mathcal{O}_k^{\mathrm{per}} \ \ \mathrm{s.t} \ \ \partial(\mathcal{O}_k \cup \mathcal{O}_k^{\mathrm{per}}) = \Gamma_1^k \cup \Gamma_3^k.
\end{equation}
Note that
$$\partial \mathcal{O}_k = \Gamma_2^k \cup \Gamma_3^k.$$
$\Gamma_3$ denotes the union of the $\Gamma_3^k$. Note that $\Gamma_3$ is in fact the complement of $\Gamma_2$ in
$\partial \mathcal{O}$. Figure \ref{figproof} explains the above definitions. \\

\begin{figure}[h!]
\centering
\begin{subfigure}{.5\textwidth}
  \centering
\begin{tikzpicture}[scale=.7]\footnotesize
 \pgfmathsetmacro{\xone}{-1}
 \pgfmathsetmacro{\xtwo}{6}
 \pgfmathsetmacro{\yone}{-1}
 \pgfmathsetmacro{\ytwo}{6}
\begin{scope}<+->;
  \draw[step=5cm,gray,very thin] (\xone,\yone) grid (\xtwo,\ytwo);
\end{scope}

\draw[opacity=0.9,black,ultra thick] (3,3) ellipse (1.3cm and 1cm);
\fill[white] (2,2) ellipse (0.9cm and 1.2cm);
\fill[gray!20] (2,2) ellipse (0.9cm and 1.2cm);
\draw[blue,semithick] (2,2) ellipse (0.9cm and 1.2cm);
\draw (2,1.5) node[]{$\mathcal{O}_k^{\mathrm{per}}\setminus \mathcal{O}_k$};
\fill[gray!60] (3,3) ellipse (1.3cm and 1cm);
\draw[opacity=0.4,red,ultra thick] (3,3) ellipse (1.3cm and 1cm);
\draw (3.5,3) node[]{\large{$\mathcal{O}_k$}};
\draw[blue] (0.8,2) node[]{\large{$\Gamma_1^k$}};
\draw[black] (3,4.3) node[]{\large{$\Gamma_3^k$}};
\draw[red] (2.2,2.7) node[]{\large{$\Gamma_2^k$}};

\end{tikzpicture}
\end{subfigure}%
\begin{subfigure}{.5\textwidth}
  \centering
\begin{tikzpicture}[scale=.7]\footnotesize
 \pgfmathsetmacro{\xone}{-1}
 \pgfmathsetmacro{\xtwo}{6}
 \pgfmathsetmacro{\yone}{-1}
 \pgfmathsetmacro{\ytwo}{6}
\begin{scope}<+->;
  \draw[step=5cm,gray,very thin] (\xone,\yone) grid (\xtwo,\ytwo);
\end{scope}

\fill[gray!20] (2,2) ellipse (0.9cm and 1.2cm);
\draw[blue,semithick] (2,2) ellipse (0.9cm and 1.2cm);
\draw (2,1.5) node[]{$\mathcal{O}_k^{\mathrm{per}}\setminus \mathcal{O}_k$};
\fill[gray!60] (3.7,3.7) ellipse (1cm and 1cm);
\draw[opacity=0.9,black,very thick] (3.7,3.7) ellipse (1cm and 1cm);
\draw[opacity=0.4,red,very thick] (3.7,3.7) ellipse (1cm and 1cm);
\draw (3.7,3.7) node[]{\large{$\mathcal{O}_k$}};
\draw[blue] (0.8,2) node[]{\large{$\Gamma_1^k$}};
\draw[black] (3.7,2.4) node[]{\large{$\Gamma_3^k$}};

\end{tikzpicture}

\end{subfigure}
\caption{Pictures of perforated cells divided into two subdomains (white and light grey) with boundary $\Gamma_i,
  i=1,2,3$. Left: $\mathcal{O}_k\cap \mathcal{O}_k^{\mathrm{per}} \neq \emptyset$. Right: $\mathcal{O}_k\cap \mathcal{O}_k^{\mathrm{per}} = \emptyset$}
\label{figproof}
\end{figure}
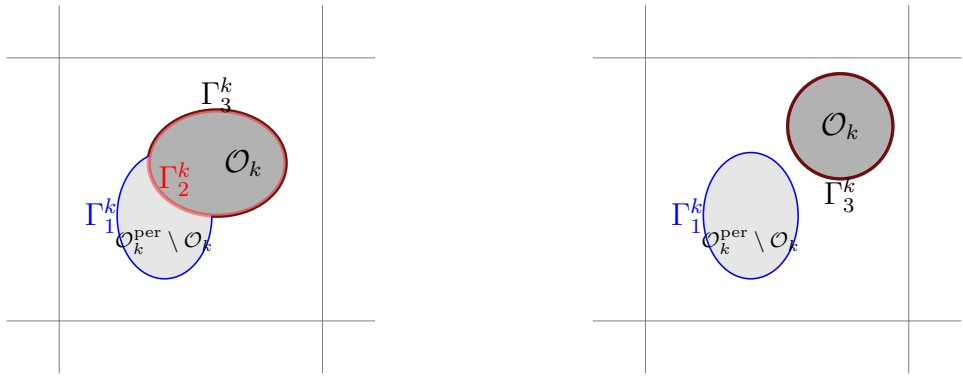

We deduce from Assumption \textbf{(A2)} Lemma~\ref{H1}, \ref{H3} and \ref{H2}, which are stated and proved in
Appendix~\ref{sec:appendix}.


\section{Results}\label{sec:results}

In order to state our main result, we first need to prove that a corrector exists:
\begin{theoreme} [Existence and uniqueness of the corrector] Let $(\mathcal{O}_k)_{k \in
    \mathbb{Z}^d}$ be a sequence of open sets satisfying Assumptions \textbf{(A1)-(A2)}. Let $\mathcal O$ be
  defined by \eqref{eq:def_O}, and
$$g = \11_{\RR^d\setminus\overline{\mathcal{O}^{\mathrm{per}}}} + \widetilde{g}, \quad\text{with}\quad \widetilde{g} \in  L^2(\mathbb{R}^d).$$ There exists a unique $\widetilde{w} \in H^1(\mathbb{R}^d \setminus \overline{\mathcal{O}})$ such that $w := w^{\mathrm{per}} + \widetilde{w} \in H^{1,\mathrm{per}}(Q) + H^1(\mathbb{R}^d \setminus \overline{\mathcal{O}})$ is solution in the sense of distributions of
\begin{equation}
\begin{cases}
\begin{aligned}
- \Delta w & = g \ \mathrm{in} \ \mathbb{R}^d \setminus \overline{\mathcal{O}} \\
w_{| \partial \mathcal{O}}&  = 0,
\end{aligned}
\end{cases}
\label{wtilde}
\end{equation}
where $w^{\mathrm{per}} \in H^{1,\mathrm{per}}_0(Q \setminus \overline{\mathcal{O}_0^{\mathrm{per}}})$ is the unique solution
of the periodic corrector problem \eqref{eq:periodic_corrector} extended by zero to $\mathbb{R}^d$.
\label{theocor}
\end{theoreme}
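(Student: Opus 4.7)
The naive decomposition $w = w^{\mathrm{per}} + \widetilde{w}$, with $w^{\mathrm{per}}$ extended by zero into $\mathcal{O}^{\mathrm{per}}$, is not directly amenable to a variational argument: $w^{\mathrm{per}}$ has a jump in normal derivative across $\Gamma_1 = \partial \mathcal{O}^{\mathrm{per}} \setminus \overline{\mathcal{O}}$, and $\Gamma_1$ may have infinite surface measure whenever the perturbation touches infinitely many cells. My plan is therefore to first replace $w^{\mathrm{per}}$ by a $Q$-periodic $C^{1,\gamma}$ extension $\overline{w}$ of the corrector to all of $\mathbb{R}^d$ (possible by the $C^{1,\gamma}$ regularity of $\mathcal{O}_0^{\mathrm{per}}$); the Laplacian $h := -\Delta \overline{w}$ is then a bounded $Q$-periodic function equal to $1$ on $\mathbb{R}^d \setminus \overline{\mathcal{O}^{\mathrm{per}}}$.

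Looking for $\widehat{w} := w - \overline{w} \in H^1(\mathbb{R}^d \setminus \overline{\mathcal{O}})$ with $\widehat{w}|_{\partial \mathcal{O}} = -\overline{w}|_{\partial \mathcal{O}}$ reduces the problem to
$$-\Delta \widehat{w} = g - h \qquad \text{in } \mathbb{R}^d \setminus \overline{\mathcal{O}}.$$
The decisive observation is that $g - h = \widetilde{g}$ on $A := \mathbb{R}^d \setminus \overline{\mathcal{O} \cup \mathcal{O}^{\mathrm{per}}}$ and $g - h = \widetilde{g} - h$ on $B := \mathcal{O}^{\mathrm{per}} \setminus \overline{\mathcal{O}}$, while Assumption~\textbf{(A2)} forces $B \subset \bigcup_{k} \mathcal{U}_k^{\mathrm{per}}(\alpha_k)$, so $|B| \leq C \sum_k \alpha_k < \infty$; combined with $h \in L^\infty$ and $\widetilde{g} \in L^2$, this yields $g - h \in L^2(\mathbb{R}^d \setminus \overline{\mathcal{O}})$. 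For the inhomogeneous datum, since $\overline{w} = 0$ on $\partial \mathcal{O}_k^{\mathrm{per}}$ and $\partial \mathcal{O}_k$ lies within distance $\alpha_k$ of $\partial \mathcal{O}_k^{\mathrm{per}}$ by \textbf{(A2)}, the Lipschitz bound gives $|\overline{w}| \leq C\alpha_k$ on $\partial \mathcal{O}_k$, and I can build a cell-by-cell lifting $\widehat{w}_0$ equal to $-\overline{w}$ times a cut-off supported in a strip of width $\alpha_k$ around $\partial \mathcal{O}_k$, with $\|\widehat{w}_0\|_{H^1(Q_k)}^2 = O(\alpha_k)$; summation over $k \in \mathbb{Z}^d$ using $(\alpha_k)\in\ell^1$ yields a global lifting $\widehat{w}_0 \in H^1(\mathbb{R}^d \setminus \overline{\mathcal{O}})$.

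With these ingredients I apply Lax--Milgram in $V := \{v \in H^1(\mathbb{R}^d \setminus \overline{\mathcal{O}}) : v|_{\partial \mathcal{O}} = 0\}$, equipped with the Dirichlet seminorm, which is equivalent to the full $H^1$-norm via a global Poincar\'e inequality obtained cell by cell with uniform constants from \textbf{(A1)} and the geometric appendix lemmas \ref{H1}--\ref{H2}. This produces a unique $z \in V$ with
$$\int \nabla z \cdot \nabla v = \int (g-h)\,v - \int \nabla \widehat{w}_0 \cdot \nabla v, \qquad \forall v \in V,$$
and $\widehat{w} := z + \widehat{w}_0$ solves the problem in the sense of distributions. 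Returning to the stated decomposition, $\widetilde{w} := \widehat{w} + (\overline{w} - w^{\mathrm{per}})$ lies in $H^1(\mathbb{R}^d \setminus \overline{\mathcal{O}})$: the difference $\overline{w} - w^{\mathrm{per}}$ vanishes on $A$, equals $\overline{w}$ on $B$ (of finite measure), and extends by zero in an $H^1$ fashion across $\Gamma_1$ because $\overline{w} = 0$ there. Uniqueness of $\widetilde{w}$ follows by testing the homogeneous problem ($\widetilde{g} = 0$ and vanishing trace) against the difference of two candidates and invoking coercivity.

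The main obstacle is the careful verification of the global Poincar\'e inequality on $V$ with uniform-in-$k$ constants and of the lifting estimate $\|\widehat{w}_0\|_{H^1(Q_k)}^2 = O(\alpha_k)$; both rest squarely on the finite-measure/$\ell^1$ content of \textbf{(A2)} combined with the $C^{1,\gamma}$ regularity of \textbf{(A1)}, and are precisely what the appendix lemmas \ref{H1}--\ref{H2} are designed to supply.
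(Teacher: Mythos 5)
Your overall architecture (a cell-by-cell lifting with $H^1$-energy $O(\alpha_k)$ summed via the $\ell^1$ condition, a uniform cell-by-cell Poincar\'e inequality, Lax--Milgram, uniqueness by coercivity) closely parallels the paper's proof, which uses the direct method of the calculus of variations instead of Lax--Milgram; the paper's Lemma~\ref{nonempty} is essentially your lifting and its Lemma~\ref{Poincare} your Poincar\'e inequality. The genuine divergence --- and the gap --- is in your very first step. You replace the extension of $w^{\mathrm{per}}$ by zero with a $Q$-periodic $C^{1,\gamma}$ extension $\overline{w}$ and assert that $h:=-\Delta\overline{w}$ is a bounded function. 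Under the paper's hypotheses this is not available: since $\mathcal{O}_0^{\mathrm{per}}$ is only $C^{1,\gamma}$, Schauder theory gives $w^{\mathrm{per}}\in C^{1,\gamma}$ up to $\partial\mathcal{O}_0^{\mathrm{per}}$ but in general \emph{not} $W^{2,p}$ up to the boundary for large $p$. If an extension $\overline{w}$ with $\Delta\overline{w}\in L^\infty(Q)$ and no singular surface part existed, interior Calder\'on--Zygmund estimates applied across the (now interior) interface $\partial\mathcal{O}_0^{\mathrm{per}}$ would force $\overline{w}\in W^{2,p}_{\mathrm{loc}}(Q)$ for every $p<\infty$, hence $w^{\mathrm{per}}\in W^{2,p}$ up to $\partial\mathcal{O}_0^{\mathrm{per}}$ from outside --- which fails for general $C^{1,\gamma}$ holes. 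So either $\overline{w}$ is merely $C^{1,\gamma}$, in which case $h$ need not even be a locally square-integrable function, or you must reintroduce a normal-derivative jump, i.e.\ a surface measure carried by $\Gamma_1$, which is exactly what you set out to avoid.

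This matters because the boundedness of $h$ is load-bearing: your proof that $g-h\in L^2(\mathbb{R}^d\setminus\overline{\mathcal{O}})$ rests on the estimate $\|h\|_{L^2(B)}\leq\|h\|_{L^\infty}|B|^{1/2}$ on the finite-measure set $B=\mathcal{O}^{\mathrm{per}}\setminus\overline{\mathcal{O}}$, whose intersection with $Q_k$ is a thin shell of measure $O(\alpha_k)$; a merely local $L^2$ control of $h$ gives no summable bound for $\sum_k\int_{\mathcal{O}_k^{\mathrm{per}}\setminus\mathcal{O}_k}h^2$. Two repairs are possible. First, strengthen the regularity of the holes to $C^{1,1}$ (or $C^2$); then $w^{\mathrm{per}}\in W^{2,\infty}$ up to the boundary, an extension with matching Cauchy data and bounded Laplacian exists, and your argument goes through and is arguably cleaner than the paper's. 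Second, keep the $C^{1,\gamma}$ setting and accept the surface term: this is the paper's route, which defines the weak formulation with the boundary integral $\int_{\Gamma_1}\left.\frac{\partial w^{\mathrm{per}}}{\partial n}\right|_{\mathrm{ext}}v$ (Definition~\ref{defifaible}) and controls it, in Lemma~\ref{estim}, by the trace theorem combined with the fact that the extension of $\widetilde{w}$ vanishes on $\mathcal{O}_k\cap\mathcal{O}_k^{\mathrm{per}}$, so that the $k$-th contribution is bounded by $|\mathcal{O}_k^{\mathrm{per}}\setminus\mathcal{O}_k|^{1/2}\left(\|\widetilde{W}\|_{L^2(\mathcal{O}_k^{\mathrm{per}})}+\|\nabla\widetilde{W}\|_{L^2(\mathcal{O}_k^{\mathrm{per}})}\right)$, summable by \textbf{(A2)} and Lemma~\ref{H1}. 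The remaining steps of your proposal (lifting, Poincar\'e inequality, coercivity, uniqueness) are sound and coincide in substance with the paper's Lemmas~\ref{nonempty} and~\ref{Poincare}.
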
 

Using Theorem~\ref{theocor} and a two-scale expansion, as it is done in the periodic case, we have the following
result, which is the generalization of Theorem~\ref{theoper} to the present setting
\begin{theoreme}[Convergence theorem in $H^1-$ norm]

Let $(\mathcal{O}_k)_{k \in \mathbb{Z}^d}$ be a sequence of open sets satisfying Assumptions \textbf{(A1)-(A2)},
and assume that $\mathcal O$ is defined by \eqref{eq:def_O}.

Let $\Omega \subset \mathbb{R}^d$ be a bounded domain and define for $\varepsilon > 0$ the perforated set $\Omega_{\varepsilon} := \Omega \setminus \varepsilon\overline{\mathcal{O}}$.

Let $f \in \mathcal{D}(\Omega)$ and $u_{\varepsilon}$ be the solution of Problem \eqref{poisson}. Then there exists a constant $C > 0$ independent of $\varepsilon$ such that
\begin{equation} 
\label{CV2}
\varepsilon^{-1} \left\| u_{\varepsilon} - \varepsilon^2 w \left( \cdot / \varepsilon \right) f \right\|_{L^2(\Omega_{\varepsilon})} + \left| u_{\varepsilon} - \varepsilon^2 w \left( \cdot / \varepsilon \right) f \right|_{H^1(\Omega_{\varepsilon})} \leq C \varepsilon^2, 
\end{equation}
where $w = w^{\mathrm{per}} + \widetilde{w} \in H^{1,\mathrm{per}}(Q) + H^1(\mathbb{R}^d \setminus
\overline{\mathcal{O}})$ is the unique solution of the corrector Problem \eqref{wtilde} with $g=1$.
\label{theo}
\end{theoreme}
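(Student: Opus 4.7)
My plan is to adapt Tartar's argument for the periodic case (Theorem~\ref{theoper}, cf.~Theorem 3.1 of \cite{lions-1980}) by substituting the periodic corrector with $w = w^{\mathrm{per}}+\widetilde{w}$ from Theorem~\ref{theocor} (applied with $g=1$), and the periodic Poincar\'e inequality (Lemma~\ref{Poincmic}) with its non-periodic analog (Theorem~\ref{Poincmic2}). The whole scheme rests on these two ingredients being available; once they are, the proof is a direct energy computation.

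First I would set $\phi_\varepsilon(x) := \varepsilon^2 w(x/\varepsilon)$, $v_\varepsilon := f\phi_\varepsilon$ and $r_\varepsilon := u_\varepsilon - v_\varepsilon$. The initial task is to check that $r_\varepsilon \in H^1_0(\Omega_\varepsilon)$: on $\partial \Omega$ this follows from $f \in \mathcal{D}(\Omega)$, and on $\varepsilon\partial\mathcal{O}\cap\Omega$ from the Dirichlet condition $w_{|\partial\mathcal{O}}=0$. Since $-\Delta w = 1$ in $\mathbb{R}^d\setminus\overline{\mathcal{O}}$, scaling gives $-\Delta\phi_\varepsilon = 1$ in $\mathbb{R}^d\setminus\varepsilon\overline{\mathcal{O}}$. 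I would then test the weak formulation of \eqref{poisson} against $r_\varepsilon$, expand $\nabla v_\varepsilon = f\nabla\phi_\varepsilon + \phi_\varepsilon\nabla f$, and integrate by parts the piece $\int f\,\nabla\phi_\varepsilon\cdot\nabla r_\varepsilon$ using $fr_\varepsilon \in H^1_0(\Omega_\varepsilon)$ as test function against the equation for $\phi_\varepsilon$. The resulting $\int f r_\varepsilon$ cancels the one coming from $-\Delta u_\varepsilon = f$, leaving the identity
$$\int_{\Omega_\varepsilon} |\nabla r_\varepsilon|^2 \;=\; \int_{\Omega_\varepsilon} r_\varepsilon\,\nabla\phi_\varepsilon\cdot\nabla f \;-\; \int_{\Omega_\varepsilon} \phi_\varepsilon\,\nabla f\cdot\nabla r_\varepsilon.$$

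Next I would bound $\|\phi_\varepsilon\|_{L^2(\Omega)}$ and $\|\nabla\phi_\varepsilon\|_{L^2(\Omega)}$ uniformly in $\varepsilon$. Via the change of variables $y=x/\varepsilon$ and the decomposition $w = w^{\mathrm{per}}+\widetilde{w}$, the periodic piece is controlled by a cell-counting argument (there are $O(\varepsilon^{-d})$ cells $Q_k$ meeting $\varepsilon^{-1}\Omega$, each contributing $\int_Q|w^{\mathrm{per}}|^2$ resp.\ $\int_Q|\nabla w^{\mathrm{per}}|^2$), giving an $O(1)$ bound; the defect piece contributes $\varepsilon^{d/2}\|\widetilde{w}\|_{H^1}$, which is smaller. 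Hence $\|\phi_\varepsilon\|_{L^2(\Omega)} \leq C\varepsilon^2$ and $\|\nabla\phi_\varepsilon\|_{L^2(\Omega)} \leq C\varepsilon$. Combined with $\|\nabla f\|_{L^\infty}<\infty$ and Cauchy--Schwarz, the right-hand side of the above identity is bounded by $C\varepsilon\|r_\varepsilon\|_{L^2(\Omega_\varepsilon)} + C\varepsilon^2\|\nabla r_\varepsilon\|_{L^2(\Omega_\varepsilon)}$.

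Finally I would invoke Theorem~\ref{Poincmic2} to write $\|r_\varepsilon\|_{L^2(\Omega_\varepsilon)} \leq C_0\varepsilon\|\nabla r_\varepsilon\|_{L^2(\Omega_\varepsilon)}$, which upgrades the above to $\|\nabla r_\varepsilon\|_{L^2(\Omega_\varepsilon)}^2 \leq C\varepsilon^2\|\nabla r_\varepsilon\|_{L^2(\Omega_\varepsilon)}$. This yields $\|\nabla r_\varepsilon\|_{L^2(\Omega_\varepsilon)} \leq C\varepsilon^2$ and then, by Poincar\'e again, $\|r_\varepsilon\|_{L^2(\Omega_\varepsilon)} \leq C\varepsilon^3$, which is precisely \eqref{CV2}. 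In this strategy the main obstacle is not in the computation above---which is the direct non-periodic transcription of Tartar's argument---but in the two non-periodic ingredients on which it rests: the existence of the corrector (Theorem~\ref{theocor}) and, most crucially, a Poincar\'e inequality with the same $\varepsilon^{1}$-scaling as in the periodic case (Theorem~\ref{Poincmic2}). The second is exactly what Assumption~\textbf{(A2)} is designed to make possible.
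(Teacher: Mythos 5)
Your proposal is correct and takes essentially the same route as the paper: both define the error $u_\varepsilon-\varepsilon^2 w(\cdot/\varepsilon)f$, check it lies in $H^1_0(\Omega_\varepsilon)$, derive the same energy identity (the paper by computing $-\Delta\bigl(u_\varepsilon-\varepsilon^2 w(\cdot/\varepsilon)f\bigr)=\varepsilon g_\varepsilon$ distributionally and testing against the error, you by testing the two weak formulations and rearranging), and close with Cauchy--Schwarz and the rescaled Poincar\'e inequality of Lemma~\ref{Poincmic2}. Your cell-counting bound on $\|w(\cdot/\varepsilon)\|_{H^1(\Omega_\varepsilon)}$ is exactly what justifies the paper's assertion that $\|g_\varepsilon\|_{L^2(\Omega_\varepsilon)}$ is bounded uniformly in $\varepsilon$.
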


We note that the constant $C$ appearing in Theorem~\ref{theo} is independent of $\varepsilon$ but depends on $f$, on the non-periodic corrector constructed in Theorem~\ref{theocor} and on the Poincar\'e-Friedrichs constant of $\Omega_{\varepsilon}$ (denoted $C$ in Lemma~\ref{Poincmic2} below).

Theorem \ref{theo} provides an error estimate of $u_{\varepsilon} - \varepsilon^2 w(\cdot/\varepsilon) f$ in
$H^1(\Omega_{\varepsilon})-$norm. However, for this choice of norm, the use of a non-periodic corrector appears
to be irrelevant, which means that we could also have used the periodic corrector $w^{\mathrm{per}}$ in \eqref{CV2}
without changing the rate of convergence. Indeed, we have
\begin{equation}
  \label{eq:1}
\left\| \varepsilon^2  \widetilde{w} \left(\frac{\cdot}{\varepsilon} \right) f \right\|_{H^1(\Omega_{\varepsilon})}
= O\left(\varepsilon^2\right).
\end{equation}
In order to prove \eqref{eq:1}, we only deal with the leading order term of the above quantity, that is, the $L^2-$norm of the gradient. One has 
\begin{multline*}
\int_{\Omega_\varepsilon}\left|\nabla \left[ \varepsilon^2 \widetilde{w} \left( \frac{\cdot}{\varepsilon} \right) f \right](x) \right|^2 
\leq 
2\varepsilon^2 \int_{\Omega_\varepsilon}\left| \nabla \widetilde{w} \left( \frac{x}{\varepsilon} \right) \right|^2
|f(x)|^2 + 2\varepsilon^4 \int_{\Omega_\varepsilon}\widetilde{w} \left( \frac{x}{\varepsilon} \right)^2 |\nabla
f(x) |^2\\ \leq C \varepsilon^2 \int_{\Omega_{\varepsilon}} \left|\nabla \widetilde{w} \left( \frac{x}{\varepsilon} \right) \right|^2 \mathrm{d}x + C \varepsilon^4 \int_{\Omega_{\varepsilon}} \left| \widetilde{w} \left( \frac{x}{\varepsilon} \right) \right|^2 \mathrm{d}x.  
\end{multline*}
Thus, after the change of variable $y = x / \varepsilon$,
$$
\int_{\Omega_{\varepsilon}} \left|\nabla \left[ \varepsilon^2 \widetilde{w} \left( \frac{\cdot}{\varepsilon} \right) f \right](x) \right|^2 \mathrm{d}x
\leq 
C \varepsilon^{d+2} \int_{\mathbb{R}^d \setminus \mathcal{O}} |\nabla \widetilde{w}(y)|^2\mathrm{d}y + C \varepsilon^{d+4} \int_{\mathbb{R}^d \setminus \mathcal{O}} | \widetilde{w}(y)|^2 \mathrm{d}y.
$$
We thus have \eqref{eq:1}, which implies (since $d\geq 2$)
$$\left\| u_{\varepsilon} - \varepsilon^2 w^{\mathrm{per}} \left( \frac{\cdot}{\varepsilon} \right) f \right\|_{H^1(\Omega_{\varepsilon})} \leq \left\| u_{\varepsilon} - \varepsilon^2 w \left( \frac{\cdot}{\varepsilon} \right) f \right\|_{H^1(\Omega_{\varepsilon})}  + \left\| \varepsilon^2  \widetilde{w} \left(\frac{\cdot}{\varepsilon} \right) f \right\|_{H^1(\Omega_{\varepsilon})} = O\left(\varepsilon^2 \right).$$
Thus, using $w^{\mathrm{per}}$ instead of $w$ in convergence Theorem \ref{theo} does not change the order $O(\varepsilon^2)$ of the error. \\

The following Theorem states that the use of $w$ instead of $w^{\mathrm{per}}$ improves the rate of convergence in $L^{\infty}-$norm for a non-periodic domain. 

\begin{theoreme}[Convergence Theorem in $L^{\infty}-$norm]
Let $(\mathcal{O}_k)_{k \in \mathbb{Z}^d}$ be a sequence of open sets satisfying Assumptions \textbf{(A1)-(A2)},
and assume that $\mathcal{O}$ is defined by \eqref{eq:def_O}. Assume that the $C^{1,\gamma}$ norms of the charts that flatten $\partial \mathcal{O}_k$ are uniformly bounded in $k$.

Let $\Omega \subset \mathbb{R}^d$ be a bounded domain and define for $\varepsilon > 0$ the perforated set $\Omega_{\varepsilon} := \Omega \setminus \varepsilon\overline{\mathcal{O}}$.

Let $f \in \mathcal{D}(\Omega)$ and $u_{\varepsilon}$ be the solution of \eqref{poisson}. Then there exists a constant $C > 0$ independent of $\varepsilon$ such that
$$
\left\| u_{\varepsilon} - \varepsilon^2 w \left( \cdot / \varepsilon \right) f \right\|_{L^{\infty}(\Omega_{\varepsilon})} \leq C \varepsilon^3, 
$$
where $w = w^{\mathrm{per}} + \widetilde{w} \in H^1_{\mathrm{per}}(Q) + H^1(\mathbb{R}^d \setminus
\overline{\mathcal{O}})$ is the unique solution of \eqref{wtilde} with $g=1$.
\label{theoinfini}
\end{theoreme}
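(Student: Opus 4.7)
The plan is to introduce the residual
\[
r_\varepsilon(x) := u_\varepsilon(x) - \varepsilon^2 w(x/\varepsilon)\,f(x),
\]
show that it solves an elliptic Dirichlet problem on $\Omega_\varepsilon$ whose right-hand side is $O(\varepsilon)$ in $L^\infty$, and then convert this into an $O(\varepsilon^3)$ bound by comparison with an explicit barrier. Using $-\Delta u_\varepsilon = f$, the identity $-\Delta w = 1$, and the chain rule for $x\mapsto w(x/\varepsilon)$, a direct computation yields
\[
-\Delta r_\varepsilon(x) = 2\varepsilon\,\nabla w(x/\varepsilon)\cdot\nabla f(x) + \varepsilon^2\,w(x/\varepsilon)\,\Delta f(x) \quad \text{in } \Omega_\varepsilon,
\]
together with $r_\varepsilon = 0$ on $\partial\Omega_\varepsilon$: on $\varepsilon\partial\mathcal{O}\cap\Omega$ I use $w_{|\partial\mathcal{O}}=0$, and on $\partial\Omega$ I use that $f\in\mathcal{D}(\Omega)$ vanishes there.

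The main preliminary task is to establish the uniform bounds
\[
\|w\|_{L^\infty(\mathbb{R}^d\setminus\overline{\mathcal{O}})} + \|\nabla w\|_{L^\infty(\mathbb{R}^d\setminus\overline{\mathcal{O}})} \leq C.
\]
For $w^{\mathrm{per}}$ this is immediate from its $C^{1,\gamma}$ regularity on the closed cell $\overline{Q\setminus\overline{\mathcal{O}_0^{\mathrm{per}}}}$ combined with periodicity. For the defect part $\widetilde w$ furnished by Theorem~\ref{theocor}, I would proceed in two stages: first promote $\widetilde w \in H^1$ to $L^\infty$ by a De~Giorgi/Moser-type iteration applied to its elliptic equation (whose right-hand side is bounded and, by Assumption \textbf{(A2)}, concentrated on the defect region), and then upgrade to a $C^{1,\gamma}$ estimate up to $\partial\mathcal{O}$ via Schauder theory. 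The extra hypothesis in the statement, that the charts flattening $\partial\mathcal{O}_k$ have $C^{1,\gamma}$ norms uniform in $k$, is exactly what makes the Schauder constants independent of the cell; producing this cell-uniform regularity is the main technical obstacle of the proof.

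With these bounds at hand, the source in the equation for $r_\varepsilon$ is controlled by $C\varepsilon$ in $L^\infty(\Omega_\varepsilon)$, with $C$ depending only on $f$ and on $\|w\|_{L^\infty}+\|\nabla w\|_{L^\infty}$. To convert this into the claimed $L^\infty$ bound on $r_\varepsilon$ itself, I use the barrier $\Phi(x) := \varepsilon^2\,w(x/\varepsilon)$, which satisfies $-\Delta\Phi=1$ in $\Omega_\varepsilon$, vanishes on $\varepsilon\partial\mathcal{O}\cap\Omega$, and is nonnegative on $\partial\Omega$ (indeed $w\geq 0$ in $\mathbb{R}^d\setminus\overline{\mathcal{O}}$ by the maximum principle, since $-\Delta w=1\geq 0$, $w_{|\partial\mathcal{O}}=0$, and $w$ is bounded). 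The two functions $C\varepsilon\,\Phi \mp r_\varepsilon$ are then nonnegative on $\partial\Omega_\varepsilon$ and satisfy $-\Delta(C\varepsilon\,\Phi \mp r_\varepsilon)\geq 0$ in $\Omega_\varepsilon$, so the weak maximum principle yields
\[
|r_\varepsilon(x)| \leq C\varepsilon\,\Phi(x) \leq C\varepsilon\cdot\varepsilon^2\,\|w\|_{L^\infty} \leq C'\,\varepsilon^3,
\]
which is the announced estimate.
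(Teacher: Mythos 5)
Your proposal is essentially the paper's proof: the same residual, the same computation of its source term (which is the paper's \eqref{227}), the same key regularity input ($w,\nabla w\in L^\infty$ via De Giorgi--Nash--Moser plus Schauder estimates made cell-uniform by the hypothesis on the charts, which is the paper's Lemma~\ref{lem14}), and the same comparison argument via the weak maximum principle; the paper merely performs the argument in the rescaled variable $y=x/\varepsilon$, which is cosmetic. The one point to repair is your barrier: you take $\Phi=\varepsilon^2 w(\cdot/\varepsilon)$ and justify $\Phi\geq 0$ on $\partial\Omega$ by asserting $w\geq 0$ ``by the maximum principle'', but $\mathbb{R}^d\setminus\overline{\mathcal{O}}$ is unbounded and the maximum principle does not apply off the shelf there (one would need a Phragm\'en--Lindel\"of argument or to exploit the $H^1$ decay of $\widetilde w$). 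The paper sidesteps this entirely by using the barrier $w+\|w\|_{L^\infty}$, which is trivially nonnegative and still satisfies $-\Delta(w+\|w\|_{L^\infty})=1$; with that substitution your argument goes through verbatim.
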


Note that $\| \varepsilon^2 \widetilde{w}(\cdot/\varepsilon) f \|_{L^{\infty}(\Omega_{\varepsilon})}$ is generally of order $\varepsilon^2$ exactly. \\ Fix $K \subset \Omega$. One has 
$$\left\| \left[ \varepsilon^2 \widetilde{w} \left( \frac{\cdot}{\varepsilon} \right) f \right](\varepsilon \cdot) \right\|_{L^{\infty}(K)} \underset{\varepsilon \rightarrow 0}{\sim} \varepsilon^2 f(0) \| \widetilde{w} \|_{L^{\infty}(K)}.$$
Besides, Theorem \ref{theoinfini} implies
$$\left\| \left[ u_{\varepsilon} - \varepsilon^2 w\left( \frac{\cdot}{\varepsilon} \right) f \right](\varepsilon \cdot) \right\|_{L^{\infty}(K)} \leq C \varepsilon^3.$$
Thus, 
$$\left\| \left[ u_{\varepsilon} - \varepsilon^2 w^{\mathrm{per}} \left( \frac{\cdot}{ \varepsilon} \right) f \right](\varepsilon \cdot) \right\|_{L^{\infty}(K)} \sim C \varepsilon^2.$$
We have the same results for $L^{\infty}(K)-$norm replaced by $L^2(K)-$norm. This proves that convergence of $u_{\varepsilon}/\varepsilon^2 - w(\cdot / \varepsilon) f$ holds at the microscale in $L^2-$norm when we use $w$. This is not the case when we use the periodic corrector $w^{\mathrm{per}}$. 

\begin{remarque}\label{rk3} This Remark is analogous to Remark~\ref{rk1} in the present non-periodic setting.
The condition $f \in \mathcal{D}(\Omega)$ can be weakened in Theorem~\ref{theo} provided that we use Lemma~\ref{lem14} proved below. Under H\"{o}lder regularity conditions on the perforations, one has thanks to Lemma \ref{lem14} that $w \in L^{\infty}(\mathbb{R}^d \setminus \overline{\mathcal{O}})$ and $\nabla w \in L^{\infty}(\mathbb{R}^d \setminus \overline{\mathcal{O}})$. Thus, if we suppose that $f \in H^2(\Omega)$ and $f_{|\partial \Omega} = 0$ in the trace sense, we obtain (see \eqref{227}),
$$\| g_{\varepsilon} \|_{L^2(\Omega_{\varepsilon})} \leq 2 \| \nabla w\|_{L^{\infty}} \| \nabla f \|_{L^2}  + \| w \|_{L^{\infty}} \| \Delta f\|_{L^2} \leq C \| f \|_{H^2(\Omega)}
$$ 
for $\varepsilon < 1$. We deduce by integration by parts that $\| u_{\varepsilon} - \varepsilon^2 w(\cdot/\varepsilon) f \|_{H^1(\Omega_{\varepsilon})} \leq C \varepsilon^2$.

If $f$ does not vanish on $\partial \Omega$, we can prove that there exists a constant $C$ independent of $\varepsilon$ such that
 $$\| u_{\varepsilon} - \varepsilon^2 w(\cdot/\varepsilon) f \|_{H^1(\Omega_{\varepsilon})} \leq C \varepsilon^{3/2} \mathcal{N}(f).$$
The proof is analogous to \cite[Appendix A.2]{lebris-legoll-lozinski-2014} provided we use Lemma \ref{lem14} below. This requires $f \in H^2 \cap L^{\infty}(\Omega)$.
\end{remarque}


\section{Poincar\'e-Friedrichs inequalities}\label{sec:poincare}

The main ingredient of the proof of Theorem~\ref{theocor} is the following Poincar\'e-type inequality.

\begin{theoreme}
\label{unifpoinc} 
Let $Q$ be the unit cube of $\mathbb{R}^d$ and let $U$ be an open subset of $Q$ containing a box $\mathcal{R} = \prod_{i=1}^d [a_i,b_i]$. Then
\begin{equation}
\forall v \in H^1(Q \setminus\overline{U}) \ \ \mathrm{s.t} \ \ v_{|\partial U} = 0, \ \int_{Q \setminus \overline{U}} |v|^2 \leq \frac{d}{|\mathcal{R}|} \int_{Q \setminus \overline{U}} |\nabla v|^2.
\label{Poincareequation}
\end{equation}
Similarly,
$$
\forall v \in H^1(Q) \ \mathrm{s.t} \ \ v_{|U} = 0, \ \int_{Q} |v|^2 \leq \frac{d}{|\mathcal{R}|} \int_{Q} |\nabla v|^2.
$$
\end{theoreme}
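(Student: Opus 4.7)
The plan is to first extend $v$ by zero from $Q\setminus\overline{U}$ to all of $Q$: the trace condition $v_{|\partial U}=0$ ensures that the extension $\overline v$ lies in $H^1(Q)$, vanishes on $U$ (and in particular on $\mathcal R\subset U$), and satisfies $\|\overline v\|_{L^2(Q)}=\|v\|_{L^2(Q\setminus\overline U)}$ together with $|\overline v|_{H^1(Q)}=|v|_{H^1(Q\setminus\overline U)}$. This reduces the first inequality to the second, which is a Poincar\'e inequality on $Q$ for $H^1$ functions vanishing on the box $\mathcal R$; a single argument then covers both cases.

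For $x\in Q$ and $y\in\mathcal R$, using $\overline v(y)=0$ and walking from $y$ to $x$ along the axis-aligned broken path that changes one coordinate at a time, I would write
\begin{equation*}
\overline v(x)=\sum_{i=1}^d \int_{y_i}^{x_i} \partial_i\overline v(x_1,\dots,x_{i-1},s,y_{i+1},\dots,y_d)\,ds,
\end{equation*}
and then apply Cauchy--Schwarz on the sum of $d$ terms, followed by Cauchy--Schwarz on each one-dimensional integral (using $|x_i-y_i|\le 1$ and $[y_i,x_i]\subset[0,1]$), to get the pointwise bound
\begin{equation*}
|\overline v(x)|^2 \le d\sum_{i=1}^d \int_0^1 |\partial_i\overline v(x_1,\dots,x_{i-1},s,y_{i+1},\dots,y_d)|^2\,ds.
\end{equation*}

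I would then integrate this over $(x,y)\in Q\times\mathcal R$. The left-hand side becomes $|\mathcal R|\int_Q|\overline v|^2$. In the $i$-th term on the right, the integrand depends only on $(x_1,\dots,x_{i-1},s,y_{i+1},\dots,y_d)$; integrating out the redundant variables $x_i,\dots,x_d$ and $y_1,\dots,y_i$ produces a factor $\prod_{j\le i}(b_j-a_j)$, which is at most $1$ because $b_j-a_j\le 1$ (since $\mathcal R\subset Q$), while the remaining variables parametrize a sub-box of $Q$, so that term is bounded by $\int_Q|\partial_i\overline v|^2$. Summing in $i$ and dividing by $|\mathcal R|$ gives the stated factor $d/|\mathcal R|$. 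The only point requiring care is this bookkeeping of integration variables so that $|\mathcal R|$ on the left cancels cleanly against $\prod_{j\le i}(b_j-a_j)$ on the right; there is no genuine obstacle. A symmetric variant, averaging the broken-line argument over all permutations of the coordinates, would sharpen the constant, but the asymmetric version already suffices.
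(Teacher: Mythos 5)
Your proof is correct, and it reaches the same constant $d/|\mathcal{R}|$ by a genuinely different route. The paper connects $x$ to the single point $\hat{x}=(a_i+x_i(b_i-a_i))_i\in\mathcal{R}$ by a \emph{straight} segment, applies Cauchy--Schwarz along that segment, and then controls $\int_Q|\nabla v((1-t)\hat x+tx)|^2\,dx$ via the change of variables $\phi_t(x)=(1-t)\hat x+tx$, whose Jacobian is bounded below by $\prod_i(b_i-a_i)=|\mathcal{R}|$; the constant comes out of that Jacobian estimate. You instead connect $x$ to a point $y$ ranging over all of $\mathcal{R}$ by the axis-parallel \emph{broken} path, and recover the factor $|\mathcal{R}|$ by averaging over $y\in\mathcal{R}$ and doing the Fubini bookkeeping; your accounting of which variables are redundant in the $i$-th term is right, and the product $\prod_{j\le i}(b_j-a_j)\le 1$ indeed cancels cleanly against $|\mathcal{R}|$ on the left. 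Your version is more elementary (no change of variables, only Fubini and Cauchy--Schwarz) but uses the product structure of $Q$ so that the broken paths stay inside the cube, whereas the paper's segment argument only needs convexity of $Q$ and is shorter to write. Two shared caveats, neither specific to your approach: the reduction of the first inequality to the second via extension by zero (and the a.e.\ validity of the one-dimensional fundamental theorem of calculus along lines, or equivalently a density argument) implicitly uses some regularity of $U$, exactly as the paper's own opening sentence ``by density'' does; and your remark that averaging over coordinate permutations would sharpen the constant is plausible but unnecessary here.
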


An important point in \eqref{Poincareequation}, is that the constant is explicit and depends only on $\mathcal
R$. This crucial point will allow us, with the help of Assumption \textbf{(A2)}, to prove Lemma~\ref{Poincmic2}
below, in which the fundamental point is that the constant does not depend on $\varepsilon$. We thus have an
explicit scaling with respect to $\varepsilon$, similarly to the periodic case. This allows us to adapt the proofs
of \cite{lions-1980}.

\begin{proof}
By density, it is enough to show the result for $v \in C^1(Q)$ satisfying $v=0$ on $U$. Fix $x \in Q$ and write
$$
v(x) - v(\hat{x}) = \int_0^1 \nabla v ( (1-t)\hat{x} + t x)\cdot ( x - \hat{x} ) \mathrm{d}t,
$$
where $\hat{x} = (a_i + x_i(b_i - a_i) )_{1 \leq i \leq d} \in \mathcal{R}$. Note that $v(\hat{x}) = 0$ and $|x - \hat{x}|^2 \leq d$. Thus by the Cauchy-Schwarz inequality
$\displaystyle
|v(x)|^2 \leq d \int_0^1 \left| \nabla v ( (1-t)\hat{x} + t x) \right|^2 \mathrm{d}t.
$
Integrating with respect to $x \in Q$ and exchanging the two integrals yields
$$
\int_Q |v(x)|^2 \mathrm{d}x \leq d \int_0^1 \left( \int_Q \left| \nabla v ( (1-t)\hat{x} + t x) \right|^2 \mathrm{d}x \right) \mathrm{d}t
$$
Fix $t \in [0,1]$ and define the diffeomorphism $\phi_t : Q \ni x \mapsto (1-t)\hat{x} + tx$. Note that $\phi_t(Q) \subset Q$ and that
$\displaystyle
|\det J(\phi_t)| = \prod_{i=1}^d \left[ (1-t)(b_i - a_i) + t \right] \geq \prod_{i=1}^d (b_i - a_i).
$
Thus by a change of variables,
$$
\int_Q \left| \nabla v ( (1-t)\hat{x} + t x) \right|^2 \mathrm{d}x \leq \frac{1}{\prod_{i=1}^d (b_i - a_i)} \int_Q |\nabla v|^2.
$$
Integrating with respect to $t$ concludes the proof.
\end{proof}

Theorem~\ref{unifpoinc} and Assumption \textbf{(A2)} allow to prove the following, which is a generalization to the present setting of Lemma~\ref{Poincmic}.

\begin{lemme}[Poincar\'e-type inequality in $H^1_0(\Omega_{\varepsilon}$)]
\label{Poincmic2}
Let $(\mathcal{O}_k)_{k \in \mathbb{Z}^d}$ be a sequence of open sets such that $\mathcal{O}_k \subset\subset Q_k$. Suppose that the sequence $(\mathcal{O}_k)_{k \in \mathbb{Z}^d}$ satisfies Assumption \textbf{(A2)}. Let $\Omega$ be an open subset of $\mathbb{R}^d$. Define for $\varepsilon > 0$,
$$\Omega_{\varepsilon} = \Omega \setminus \varepsilon \overline{\mathcal{O}} = \Omega \cap \left( \bigcup_{k \in \mathbb{Z}^d} \varepsilon ( Q_k \setminus \overline{\mathcal{O}_k}) \right).$$
There exists a constant $C > 0$ independent of $\varepsilon$ such that
$$
\forall u \in H^1_0(\Omega_{\varepsilon}), \ \int_{\Omega_{\varepsilon}} u^2 \leq C \varepsilon^2 \int_{\Omega_{\varepsilon}} |\nabla u|^2.
$$
\end{lemme}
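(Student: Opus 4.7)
The strategy is to reduce to a unit-scale problem by rescaling, and then apply Theorem~\ref{unifpoinc} cell-by-cell. First I extend $u \in H^1_0(\Omega_\varepsilon)$ by zero to $\mathbb{R}^d$ and set $v(y) := u(\varepsilon y)$. Then $v \in H^1(\mathbb{R}^d)$ vanishes on $(\mathbb{R}^d \setminus \varepsilon^{-1}\Omega) \cup \mathcal{O}$, and in particular on every perforation $\mathcal{O}_k$, $k\in \mathbb{Z}^d$. The scaling identities
\[
\int_{\mathbb{R}^d} u^2\,dx = \varepsilon^d \int_{\mathbb{R}^d} v^2\,dy, \qquad \int_{\mathbb{R}^d} |\nabla u|^2\,dx = \varepsilon^{d-2}\int_{\mathbb{R}^d} |\nabla v|^2\,dy
\]
reduce the claim to proving $\int_{\mathbb{R}^d} v^2 \leq C \int_{\mathbb{R}^d} |\nabla v|^2$ with $C$ independent of $\varepsilon$.

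The second step uses Assumption \textbf{(A2)} to exhibit one fixed box contained in almost every perforation. Since $\mathcal{O}_0^{\mathrm{per}}$ is a nonempty open set, I pick a closed box $\mathcal{R} \subset \mathcal{O}_0^{\mathrm{per}}$ with $\alpha_0 := \tfrac{1}{2}\mathrm{dist}(\mathcal{R}, \partial\mathcal{O}_0^{\mathrm{per}}) > 0$. For every index $k$ satisfying $\alpha_k \leq \alpha_0$, \textbf{(A2)} gives $\mathcal{R}+k \subset \mathcal{O}_k^{\mathrm{per},-}(\alpha_k) \subset \mathcal{O}_k$; and since $(\alpha_k) \in \ell^1(\mathbb{Z}^d)$, the exceptional set $K := \{k \in \mathbb{Z}^d : \alpha_k > \alpha_0\}$ is finite. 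For each good index $k \notin K$, the second statement of Theorem~\ref{unifpoinc} applied on $Q_k = Q+k$ (translation is harmless) yields
\[
\int_{Q_k} v^2 \;\leq\; \frac{d}{|\mathcal{R}|} \int_{Q_k} |\nabla v|^2.
\]

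It remains to treat the finitely many bad cells in $K$ in bulk, which is the main (minor) obstacle: on such cells, \textbf{(A2)} gives no volumetric lower bound on $\mathcal{O}_k$. Since $K$ is finite, I enclose $\bigcup_{k\in K} Q_k$ together with at least one adjacent good cell $Q_{k_0}$, $k_0 \notin K$, inside a single bounded rectangle $R$ with integer vertices. The proof of Theorem~\ref{unifpoinc} carries over verbatim from the unit cube to such an $R$---one only replaces the bound $|x-\hat{x}|^2 \leq d$ by the squared diameter of $R$ and the Jacobian lower bound by $|\mathcal{R}|/|R|$---producing
\[
\int_R v^2 \;\leq\; C_R \int_R |\nabla v|^2,
\]
with $C_R$ finite and independent of $\varepsilon$, since the perforation $\mathcal{O}_{k_0} \subset R$ still contains the fixed box $\mathcal{R}+k_0$ on which $v$ vanishes. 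Summing this estimate with the good-cell bounds over $\{k \notin K : Q_k \not\subset R\}$ gives $\int_{\mathbb{R}^d}v^2 \leq \max(C_R, d/|\mathcal{R}|) \int_{\mathbb{R}^d}|\nabla v|^2$, and undoing the rescaling via $y = x/\varepsilon$ yields the claim. The $\ell^1$ summability of $(\alpha_k)$ is precisely what keeps $K$ finite, $R$ of bounded size, and $C_R$ harmlessly $\varepsilon$-independent.
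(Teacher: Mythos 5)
Your proof is correct and follows essentially the same route as the paper's: a cell-by-cell application of Theorem~\ref{unifpoinc} with a uniformly sized box extracted from Assumption \textbf{(A2)}, a finite exceptional set of cells, and the rescaling $v = u(\varepsilon\,\cdot)$ at the end. The only divergence is in the exceptional cells: the paper takes $\mathcal{K}=\{k : \mathcal{O}_k\cap\mathcal{O}_k^{\mathrm{per}}=\emptyset\}$ and picks a box inside each (nonempty, open) $\mathcal{O}_k$ individually, whereas you absorb your exceptional set $\{k:\alpha_k>\alpha_0\}$ into one large rectangle containing a good cell and rerun the proof of Theorem~\ref{unifpoinc} there --- a valid variant whose minor advantage is that it does not require the exceptional perforations to be nonempty.
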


\begin{proof}
We first recall (see Lemma \ref{H2} in the appendix) that $\mathcal{K} := \{k \in \mathbb{Z}^d, \quad \mathcal{O}_k \cap \mathcal{O}_k^{\mathrm{per}} = \emptyset \}$ is finite.
We show that there exists $\widetilde{\rho} > 0$ such that for all $k \in \mathbb{Z}^d$, there exists a box $\mathcal{R}_k \subset \mathcal{O}_k$ satisfying $|\mathcal{R}_k| \geq \widetilde{\rho}$. Fix $k \in \mathbb{Z}^d$, there are two cases :
\begin{itemize}
\item \textit{Case 1}: $k \in \mathcal{K}$ (see Lemma \ref{H1}). The open set $\mathcal{O}_k$ contains a ball and thus a box $\mathcal{R}_k$.
\item \textit{Case 2}: $k \notin \mathcal{K}$. By Lemma \ref{H3}, there exists a ball $B_k \subset \mathcal{O}_k$ such that $|B_k| \geq \rho$ with $\rho$ independent of $k$. Thus, there exists a box $\mathcal{R}_k \subset \mathcal{O}_k$ such that $|\mathcal{R}_k| \geq C(d) \rho$ where $C(d)$ is a constant depending only on $d$. 
\end{itemize}
We define $\widetilde{\rho} := \min \left( C(d) \rho, \min\limits_{k \in \mathcal{K}} |\mathcal{R}_k| \right) > 0$ to conclude.

 We next use Theorem \ref{unifpoinc}. We get that
$$
\forall k \in \mathbb{Z}^d, \ \forall w \in 
H^1_0\left(\RR^d\setminus \overline{\mathcal O}\right),
\ \ \int_{Q_k \setminus \overline{\mathcal{O}_k}} w^2 \leq \frac{d}{\widetilde{\rho}} \int_{Q_k \setminus \overline{\mathcal{O}_k}} |\nabla w|^2.
$$
Summing over $k \in \mathbb{Z}^d$ each inequality and defining $C := d/\widetilde{\rho}$ yields
\begin{equation}
\label{Po}
\forall w \in H^1_0(\mathbb{R}^d \setminus \overline{\mathcal{O}}), \ \int_{\mathbb{R}^d \setminus \overline{\mathcal{O}}} w^2 \leq C \int_{\mathbb{R}^d \setminus \overline{\mathcal{O}}} |\nabla w|^2.
\end{equation}

Now, fix $u \in H^1_0(\Omega_{\varepsilon})$. We extend $u$ by zero to $\mathbb{R}^d \setminus \varepsilon \overline{\mathcal{O}}$ and define $v := u(\varepsilon \cdot)$. It is clear that $v \in H^1_0(\mathbb{R}^d \setminus \overline{\mathcal{O}})$ and that
\begin{equation}
\label{213}
\forall y \in \mathbb{R}^d \setminus \overline{\mathcal{O}}, \ \nabla v\left(y \right) =\varepsilon \nabla u(\varepsilon y).
\end{equation}
Applying \eqref{Po} with $w = v \in H^1_0(\mathbb{R}^d \setminus \overline{\mathcal{O}})$ and using \eqref{213} yields
$$\int_{\frac{1}{\varepsilon} \Omega_{\varepsilon}} u^2(\varepsilon y) \mathrm{d}y = \int_{\mathbb{R}^d \setminus \overline{\mathcal{O}}} u^2\left(\varepsilon y \right) \mathrm{d}y \leq C\varepsilon^2 \int_{\mathbb{R}^d \setminus \overline{\mathcal{O}}} |\nabla u|^2\left(\varepsilon y \right) \mathrm{d}y = C \varepsilon^2 \int_{\frac{1}{\varepsilon} \Omega_{\varepsilon}} |\nabla u|^2(\varepsilon y) \mathrm{d}y.$$
Making the change of variables $x=\varepsilon y$ in each integral finally concludes the proof.

\end{proof}

\section{Proofs}\label{sec:proofs}

\subsection{Two-scale expansion}

The aim of this section is to find an asymptotic equivalent of $u_{\varepsilon}$ as $\varepsilon$ goes to zero. We begin by the two scale expansion of $u_{\varepsilon}$. Write \\
$$
u_{\varepsilon}(x) = u_0 \left(x,\frac{x}{\varepsilon} \right) + \varepsilon u_1 \left(x, \frac{x}{\varepsilon} \right) + \varepsilon^2 u_2 \left(x, \frac{x}{\varepsilon} \right) + \varepsilon^3 u_3 \left(x, \frac{x}{\varepsilon} \right) + \cdots,
$$
where the functions $u_i$ are now defined on $\Omega \times (\mathbb{R}^d \setminus \overline{\mathcal{O}})$,
 and are of the form $u_i^{\mathrm{per}} + \widetilde{u_i}$. Suppose that $\widetilde{u_i}(x,\cdot) \in
H^1(\mathbb{R}^d \setminus \overline{\mathcal{O}})$ and use the $u_i^{\mathrm{per}}$'s defined in
Section~\ref{sec:periodic-case} and extended by zero to $\mathbb{R}^d$. Because of the Dirichlet Boundary
conditions on $u_{\varepsilon}$, we impose that $u_i(x,y) = 0$ for $y \in \partial \mathcal{O}$ and any $x \in
\Omega$. The calculations leading to \eqref{eq:2-echelles} (see \cite[Section 2]{lions-1980}) are still valid, so we have:
\begin{equation}
\label{eqperturbe}
\begin{cases}
- \Delta_y u_0 = 0 \\
- \Delta_y u_1 - 2(\nabla_x \cdot \nabla_y) u_0 = 0 \\
- \Delta_y u_2 - 2(\nabla_x \cdot \nabla_y) u_1 - \Delta_x u_0 = f \\
- \Delta_y u_3 - 2(\nabla_x \cdot \nabla_y) u_2 - \Delta_x u_1 = 0 \\
\cdots
\end{cases},
\end{equation}
where all these equations are posed on $\Omega \times (\mathbb{R}^d  \setminus \overline{\mathcal{O}})$. These equations imply that $u_0$ and $u_1$ are constantly equal to zero. Indeed, fix $x \in \Omega$. Since $u_0^{\mathrm{per}} \equiv 0$, we get that $\widetilde{u_0}(x,\cdot)$ satisfies the PDE 
$$
\begin{cases}
\begin{aligned}
- \Delta_y \widetilde{u_0} & = 0 \ \mathrm{in} \ \mathbb{R}^d \setminus \overline{\mathcal{O}}, \\
\widetilde{u_0}_{|\partial \mathcal{O}}  & = 0.
\end{aligned}
\end{cases}
$$
Multiplying by $\widetilde{u_0}(x,\cdot) \in H^1(\mathbb{R}^d \setminus \overline{\mathcal{O}})$ and integrating by parts yields $\widetilde{u_0}(x,\cdot) \equiv 0$. Thus $u_0 \equiv 0$. Similarly, $u_1 \equiv 0$. We are now left with the following equation on $u_2$ :

\begin{equation}
\label{u2pert}
\begin{cases}
\begin{aligned}
- \Delta_y u_2(x,y) & = f(x) \ \ \mathrm{in} \ \ \mathbb{R}^d \setminus \overline{\mathcal{O}} \\ 
u_2(x,y) & = 0, \ \ x \in \Omega, y \in \partial \mathcal{O}.
\end{aligned}
\end{cases}
\end{equation}

According to (\ref{u2pert}), $u_2(x,y) = f(x) w(y)$, where $w$ is a solution to the corrector equation
(\ref{wtilde}) with $g\equiv 1$. This is why we introduced the corrector equation. 
\subsection{Proof of the existence of a corrector}

The aim of this section is to prove Theorem \ref{theocor}. The difficulty of this theorem is that equation \eqref{wtilde} is posed on an unbounded domain. 

We search for $w$ in the form $w^{\mathrm{per}} + \widetilde{w}$, where we impose that $\widetilde{w} \in H^1(\mathbb{R}^d \setminus \overline{\mathcal{O}})$. We write the equation on $\widetilde{w}$ and prove by energy minimization that there is a solution. 

\subsubsection{Perturbed corrector}

The equation we want to solve for $\widetilde{w}$ is 
\begin{equation}
\label{wtilde2}
- \Delta \widetilde{w} = \11_{\RR^d\setminus \overline{\mathcal O}_{\mathrm{per}}} + \widetilde{g} + \Delta w^{\mathrm{per}},
\end{equation}
where $\widetilde{g} \in L^2(\mathbb{R}^d)$ and $w^{\mathrm{per}} \in H^{1,\mathrm{per}}(Q)$ is the solution to \eqref{eq:periodic_corrector} defined in Section~\ref{Introduction}. We recall that $w^{\mathrm{per}}$ is extended by zero in $\mathcal{O}^{\mathrm{per}}$. We impose that $\widetilde{w} = - w^{\mathrm{per}}$ on $\partial \mathcal{O}$.

It is worth noticing that $w^{\mathrm{per}} \notin H^2(Q)$, and thus the right-hand side of \eqref{wtilde2} cannot
be in $L^2(\mathbb{R}^d \setminus \overline{\mathcal{O}})$. Thus the linear form of the weak formulation of
\eqref{wtilde2} is not of the form $v \mapsto \int f v$. In fact, we will have to deal with boundary terms along
$\partial  \mathcal{O}^{\mathrm{per}}$. These terms express the fact that $\Delta w_{\mathrm{per}}$ is a Dirac
measure on $\partial \mathcal{O}^{\mathrm{per}}$ (or that $w^{\mathrm{per}}$ has normal derivative jumps along
$\partial \mathcal{O}^{\mathrm{per}}$). 

\paragraph{Notation.} We denote by $\left.\frac{\partial u}{\partial n}\right|_{\mathrm{ext}}$ (resp. $\left.\frac{\partial u}{\partial n}\right|_{\mathrm{int}}$ ) the exterior normal derivative of $u$ on the outside (resp. inside) of a piecewise smooth closed surface $\Gamma$ (when it is defined i.e $u$ is $H^2$ on each side of the boundary). 

\begin{definition}
We say that $\widetilde{w} \in H^1(\mathbb{R}^d \setminus \overline{\mathcal{O}})$ is a weak solution of \eqref{wtilde2} if
\begin{equation}
\label{formfaible}
\forall v \in C^1_c(\mathbb{R}^d \setminus \overline{\mathcal{O}}), \ \int_{\mathbb{R}^d \setminus \overline{\mathcal{O}}} \nabla \widetilde{w} \cdot \nabla v + \int_{\Gamma_1} \left.\frac{\partial w^{\mathrm{per}}}{\partial n}\right|_{\mathrm{ext}} v - \int_{\mathbb{R}^d \setminus \mathcal{O}} \widetilde{g} v  
= 0, 
\end{equation}
and $\widetilde{w}_{|\partial\mathcal O} = - w_{\mathrm{per}}$ in the trace sense.
\label{defifaible}
\end{definition}

\begin{remarque} 
We could also have written equation \eqref{wtilde2} as a system of PDEs coupled by transmission conditions:
\begin{equation}
\label{system}
\begin{cases}
- \Delta \widetilde{w} = \widetilde{g} \ \ \ \mathrm{in} \ \ \ \mathbb{R}^d \setminus (\overline{\mathcal{O} \cup \mathcal{O}^{\mathrm{per}}}) \\
- \Delta \widetilde{w} =  \widetilde{g} \ \ \ \mathrm{in} \ \ \ \mathcal{O}^{\mathrm{per}} \setminus \overline{\mathcal{O}} \\
\widetilde{w} = - w^{\mathrm{per}} \ \ \mathrm{on} \ \ \Gamma_2 \cup \Gamma_3 \\
 \left.\frac{\partial \widetilde{w}}{\partial n}\right|_{\mathrm{ext}} + \left.\frac{\partial \widetilde{w}}{\partial n}\right|_{\mathrm{int}} = \left.\frac{\partial w^{\mathrm{per}}}{\partial n}\right|_{\mathrm{ext}} \ \mathrm{on} \ \ \Gamma_1
\end{cases}
\end{equation}
The three first equations are obviously necessary. The last equation is necessary to guarantee that $w = w^{\mathrm{per}} + \widetilde{w} \in H^2_{\mathrm{loc}}(\mathbb{R}^d \setminus \overline{\mathcal{O}})$. 
\end{remarque}



Using standard tools of the calculus of variations, one easily proves the following:
\begin{lemme}
\label{lemmin}
Assume that $\widetilde w\in H^1\left(\RR^d\setminus {\mathcal O}\right)$. It is a weak solution of \eqref{wtilde2}
in the sense of Definition~\ref{defifaible}, if and only if it is a solution to the following minimization problem:
\begin{equation}
\label{minPB}
\inf_{\widetilde{w} \in V} 
\left\{ \frac{1}{2} \int_{\mathbb{R}^d \setminus \overline{\mathcal{O}}} |\nabla \widetilde{w}|^2 + \int_{\Gamma_1} \left. \frac{\partial w^{\mathrm{per}}}{\partial n} \right|_{\mathrm{ext}} \widetilde{w} - \int_{\mathbb{R}^d \setminus \overline{\mathcal{O}}} \widetilde{g} \widetilde{w} 
\right\},
\end{equation}
where the minimization space $V$ is defined by
\begin{equation}
\label{min}
V := \left\{ \widetilde{w} \in H^1(\mathbb{R}^d \setminus \overline{\mathcal{O}}) \ \mathrm{s.t} \ \ \widetilde{w}_{|\partial \mathcal{O}} = - w_{\mathrm{per}}  \right\}.
\end{equation}
\end{lemme}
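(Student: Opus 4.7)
The plan is to treat Lemma~\ref{lemmin} as the standard equivalence between the minimizer of a strictly convex quadratic functional on an affine space and the solution of its associated Euler--Lagrange equation. The functional
\[
J(\widetilde w) := \tfrac12\int_{\mathbb R^d\setminus\overline{\mathcal O}}|\nabla\widetilde w|^2 + \int_{\Gamma_1}\left.\tfrac{\partial w^{\mathrm{per}}}{\partial n}\right|_{\mathrm{ext}}\widetilde w - \int_{\mathbb R^d\setminus\overline{\mathcal O}}\widetilde g\,\widetilde w
\]
is a positive semidefinite quadratic form plus a continuous affine term on the affine space $V$ defined by \eqref{min}, so its critical points on $V$ (along the tangent directions vanishing on $\partial\mathcal O$) are exactly its minimizers.

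First I would fix $\widetilde w\in V$ and a test function $v\in C^1_c(\mathbb R^d\setminus\overline{\mathcal O})$. Because $v$ has compact support at positive distance from $\partial\mathcal O$, $v$ vanishes on $\partial\mathcal O$, so $\widetilde w + tv\in V$ for every $t\in\mathbb R$. A direct expansion gives
\[
J(\widetilde w + tv) - J(\widetilde w) = \frac{t^2}{2}\int_{\mathbb R^d\setminus\overline{\mathcal O}}|\nabla v|^2 + t\,\Phi(v),
\]
where
\[
\Phi(v) := \int_{\mathbb R^d\setminus\overline{\mathcal O}}\nabla\widetilde w\cdot\nabla v + \int_{\Gamma_1}\left.\tfrac{\partial w^{\mathrm{per}}}{\partial n}\right|_{\mathrm{ext}}v - \int_{\mathbb R^d\setminus\overline{\mathcal O}}\widetilde g\,v
\]
is precisely the expression appearing in the weak formulation \eqref{formfaible}.

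From this single identity both implications follow. If $\widetilde w$ minimizes $J$ on $V$, then $t\mapsto J(\widetilde w + tv)$ has a minimum at $t=0$; differentiating forces $\Phi(v)=0$ for every $v\in C^1_c(\mathbb R^d\setminus\overline{\mathcal O})$, which is exactly \eqref{formfaible}. Conversely, if $\widetilde w$ is a weak solution, then $\Phi(v)=0$ for every admissible test function, so the identity above yields $J(\widetilde w + v) - J(\widetilde w) = \tfrac12\int|\nabla v|^2\geq 0$ for every $v\in C^1_c(\mathbb R^d\setminus\overline{\mathcal O})$; since any competitor $\widetilde w'\in V$ satisfies $\widetilde w' - \widetilde w\in H^1_0(\mathbb R^d\setminus\overline{\mathcal O})$ and $C^1_c$ is dense in that space, this extends to all competitors and shows that $\widetilde w$ minimizes $J$ on $V$.

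The computations themselves are routine; the only genuine subtlety is ensuring that every term entering $J$ and $\Phi$ is well defined on the unbounded domain. Here I would invoke $\widetilde g\in L^2(\mathbb R^d)$ together with $\widetilde w\in H^1$ for the volume integrals, and for the boundary term on $\Gamma_1$ I would note that $\left.\partial_n w^{\mathrm{per}}\right|_{\mathrm{ext}}$ is periodic and bounded (by the $C^{1,\gamma}$ regularity of $\mathcal O_0^{\mathrm{per}}$ and elliptic regularity for the periodic corrector), while Assumption \textbf{(A2)} together with the trace theorem makes the trace of $\widetilde w$ square-integrable on $\Gamma_1$. This is the only place where the geometric hypotheses enter; modulo these integrability checks, the equivalence is the standard textbook argument.
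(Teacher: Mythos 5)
Your proposal is correct and is precisely the argument the paper has in mind: the authors state Lemma~\ref{lemmin} with only the remark that it follows ``using standard tools of the calculus of variations,'' and your expansion of $J(\widetilde w+tv)$, the identification of the first variation with \eqref{formfaible}, and the convexity argument for the converse (together with the density of $C^1_c$ in $H^1_0(\mathbb{R}^d\setminus\overline{\mathcal O})$ and the continuity of the boundary functional, which the paper records separately in Remark~\ref{rq:continuite_forme_lineaire}) is exactly that standard route. No gap.
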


\begin{definition}
  \label{def:extension_V}
  Let $\widetilde w \in V$. We denote by $\widetilde W$ its extension to $\RR^d$ defined by $\widetilde w =
  -w^{\rm per}$ in $\mathcal O$.
\end{definition}

The extension $\widetilde W$ of $\widetilde w$ satisfies $\widetilde{W} \in H^1(\mathbb{R}^d)$ under Assumptions \textbf{(A1)-(A2)} on the sequence $(\mathcal{O}_k)_{k \in \mathbb{Z}^d}$. Figure \ref{figproof4} shows a function $\widetilde{w} \in V$ (extended to $\mathcal{O}$ by $-w^{\mathrm{per}}$). 

In order to study the minimization problem (\ref{minPB}), we will need the following Poincar\'e type inequality on
$V$.
\begin{figure}
\centering
\begin{subfigure}{.5\textwidth}
  \centering
\begin{tikzpicture}[scale=.9]\footnotesize
 \pgfmathsetmacro{\xone}{-1}
 \pgfmathsetmacro{\xtwo}{6}
 \pgfmathsetmacro{\yone}{-1}
 \pgfmathsetmacro{\ytwo}{6}
\begin{scope}<+->;
  \draw[step=5cm,gray,very thin] (\xone,\yone) grid (\xtwo,\ytwo);
\end{scope}

\draw[black] (2.5,3) ellipse (1.5cm and 1.3cm);
\fill[white] (2,2) ellipse (1cm and 1.2cm);
\fill[opacity=0.5, gray!20] (2,2) ellipse (1cm and 1.2cm);
\draw[black] (2,2) ellipse (1cm and 1.2cm);
\draw (2,1.35) node[]{$\mathcal{O}_k^{\mathrm{per}} \setminus \mathcal{O}_k$};
\draw (2.1,2.3) node[]{$\mathcal{O}_k \cap \mathcal{O}_k^{\mathrm{per}}$};
\fill[opacity=0.5,gray!100] (2.5,3) ellipse (1.5cm and 1.3cm);
\draw[black] (2.5,3) ellipse (1.5cm and 1.3cm);
\draw (3.5,3) node[]{\large{$\mathcal{O}_k$}};
\draw[black] (0.9,1.4) node[]{$\Gamma_1^k$};
\draw[black] (3.2,4.35) node[]{$\Gamma_3^k$};
\draw[black] (-0.8,1.8) node[]{$\Gamma_2^k$};
\draw[red] (-0.8,2.2) node[]{$\widetilde{W} = 0$};
\draw[blue] (2,2.8) node[]{$\widetilde{W}=0$};
\draw[blue] (2.5,3.5) node[]{$\widetilde{W}=-w^{\mathrm{per}}$};
\draw[-stealth] (1,4) -- (1.5,3.05);
\draw[red] (0.8,4.2) node[]{$\widetilde{W} = 0$};
\draw[-stealth] (-0.15,2) -- (1.3,2.22);

\end{tikzpicture}
\end{subfigure}%
\begin{subfigure}{.5\textwidth}
  \centering
\begin{tikzpicture}[scale=.9]\footnotesize
 \pgfmathsetmacro{\xone}{-1}
 \pgfmathsetmacro{\xtwo}{6}
 \pgfmathsetmacro{\yone}{-1}
 \pgfmathsetmacro{\ytwo}{6}
\begin{scope}<+->;
  \draw[step=5cm,gray,very thin] (\xone,\yone) grid (\xtwo,\ytwo);
\end{scope}

\fill[gray!20] (2,2) ellipse (0.9cm and 1.2cm);
\draw[black] (2,2) ellipse (0.9cm and 1.2cm);
\draw (2,1.5) node[]{\large{$\mathcal{O}_k^{\mathrm{per}}$}};
\fill[gray!60] (3.7,3.7) ellipse (1cm and 1cm);
\draw[black] (3.7,3.7) ellipse (1cm and 1cm);
\draw[black] (3.7,3.7) ellipse (1cm and 1cm);
\draw (3.7,4) node[]{\large{$\mathcal{O}_k$}};
\draw[blue] (3.8,3.5) node[]{$\widetilde{W} = - w^{\mathrm{per}}$};
\draw[black] (0.8,2) node[]{\large{$\Gamma_1^k$}};
\draw[black] (3.7,2.4) node[]{\large{$\Gamma_3^k$}};

\end{tikzpicture}

\end{subfigure}
\caption{Function $\widetilde{w}$ (its extension $\widetilde{W}$) on a perforated cell with and without overlapping}
\label{figproof4}
\end{figure}
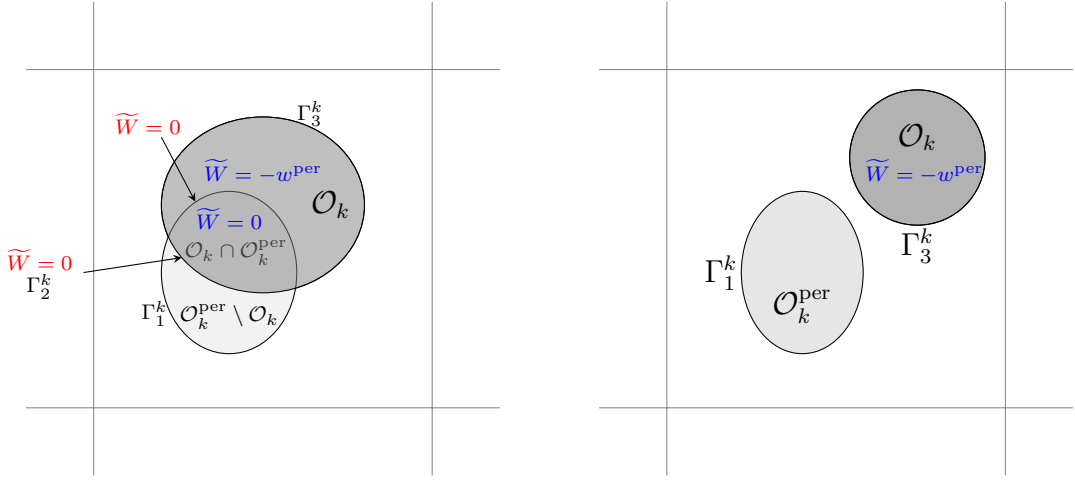

\begin{lemme}[Poincar\'e-type inequality in $V$]
\label{Poincare}
Let $(\mathcal{O}_k)_{k \in \mathbb{Z}^d}$ be a sequence of sets satisfying Assumptions \textbf{(A1)-(A2)}. Define
$\mathcal{O} = \bigcup_{k \in \mathbb{Z}^d} \mathcal{O}_k$. Let $w^{\mathrm{per}}$ be the periodic corrector
solution to \eqref{eq:periodic_corrector}. There exist constants $C_0>0$ and $C_1 > 0$ such that for any $\widetilde{w} \in V$,
\begin{equation}
\label{Poincareineq}
\int_{\mathbb{R}^d \setminus  \overline{\mathcal{O}}} \widetilde{w}^2 \leq C_0 \int_{\mathbb{R}^d \setminus \overline{\mathcal{O}}} | \nabla \widetilde{w} |^2 + C_1.
\end{equation}
Denoting by $\widetilde W$ the extension of $\widetilde{w}$ (see Definition~\ref{def:extension_V}), we also have
$$
\int_{\mathbb{R}^d} \widetilde{W}^2 \leq C_0 \int_{\mathbb{R}^d } | \nabla \widetilde{W} |^2 + C_1.
$$
\end{lemme}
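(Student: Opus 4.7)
The plan is to first establish the global inequality on $\mathbb{R}^d$ for the extension $\widetilde{W}$ (introduced in Definition~\ref{def:extension_V}) and then deduce the inequality for $\widetilde{w}$ on $\mathbb{R}^d \setminus \overline{\mathcal{O}}$. The key structural observation is that since $w^{\mathrm{per}}$ is extended by zero on $\mathcal{O}^{\mathrm{per}}$, we have $\widetilde{W} = -w^{\mathrm{per}} = 0$ on $\mathcal{O} \cap \mathcal{O}^{\mathrm{per}}$. Under \textbf{(A1)-(A2)}, $\widetilde{W} \in H^1(\mathbb{R}^d)$, as already noted in the excerpt.

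I then decompose $\mathbb{R}^d = \bigcup_{k \in \mathbb{Z}^d} Q_k$ and estimate $\int_{Q_k}\widetilde{W}^2$ cell by cell, splitting $\mathbb{Z}^d$ according to $\mathcal{K} = \{k \in \mathbb{Z}^d : \mathcal{O}_k \cap \mathcal{O}_k^{\mathrm{per}} = \emptyset\}$, which is finite by Lemma~\ref{H2}. For $k \notin \mathcal{K}$, the geometric construction already used in the proof of Lemma~\ref{Poincmic2} (relying on Lemma~\ref{H3} and the fact that $\alpha_k$ is small for all but finitely many $k$) provides a box $\mathcal{R}_k \subset \mathcal{O}_k \cap \mathcal{O}_k^{\mathrm{per}}$ with $|\mathcal{R}_k| \geq \widetilde\rho > 0$ independent of $k$. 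Since $\widetilde{W} \equiv 0$ on $\mathcal{R}_k$, Theorem~\ref{unifpoinc} applied on the unit cube $Q_k$ gives
$$
\int_{Q_k} \widetilde{W}^2 \leq \frac{d}{\widetilde\rho}\int_{Q_k} |\nabla \widetilde{W}|^2.
$$
For $k \in \mathcal{K}$ (finitely many), I pick a ball $B_k \subset \mathcal{O}_k$ (nonempty by \textbf{(A1)}/Lemma~\ref{H1}) and apply the standard Poincar\'e--Wirtinger inequality on $Q_k$ with mean $c_k = \fint_{B_k} \widetilde{W} = -\fint_{B_k} w^{\mathrm{per}}$, using $|c_k| \leq \|w^{\mathrm{per}}\|_{L^\infty}$ (finite by elliptic regularity, since $\partial \mathcal{O}_0^{\mathrm{per}}$ is $C^{1,\gamma}$). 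Summing over $\mathbb{Z}^d$, the finite contribution from $k \in \mathcal{K}$ is absorbed into $C_1$, and one obtains the global inequality on $\mathbb{R}^d$.

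To pass to $\mathbb{R}^d \setminus \overline{\mathcal{O}}$, note that $\int_{\mathbb{R}^d \setminus \overline{\mathcal{O}}} \widetilde{w}^2 \leq \int_{\mathbb{R}^d}\widetilde{W}^2$, and
$$
\int_{\mathbb{R}^d} |\nabla \widetilde{W}|^2 = \int_{\mathbb{R}^d \setminus \overline{\mathcal{O}}} |\nabla \widetilde{w}|^2 + \int_{\mathcal{O}} |\nabla w^{\mathrm{per}}|^2.
$$
The last integral is finite: since $\nabla w^{\mathrm{per}} \equiv 0$ on $\mathcal{O}^{\mathrm{per}}$ and $\mathcal{O}_k \setminus \mathcal{O}_k^{\mathrm{per}} \subset \mathcal{U}_k^{\mathrm{per}}(\alpha_k)$ by \textbf{(A2)}, it is bounded by $\|\nabla w^{\mathrm{per}}\|_{L^\infty}^2 \sum_k |\mathcal{U}_k^{\mathrm{per}}(\alpha_k)| \leq C \sum_k \alpha_k < \infty$, using $(\alpha_k) \in \ell^1(\mathbb{Z}^d)$. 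Combining everything gives both inequalities, possibly with a larger $C_1$.

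The main obstacle is the selection, for all $k \notin \mathcal{K}$, of a box contained in the intersection $\mathcal{O}_k \cap \mathcal{O}_k^{\mathrm{per}}$ with a uniform lower bound on its volume. This is exactly what Assumption \textbf{(A2)} (through $\alpha_k \to 0$ and the inclusion $\mathcal{O}_k^{\mathrm{per},-}(\alpha_k) \subset \mathcal{O}_k$) and Lemma~\ref{H3} guarantee; once that is in hand, everything else reduces to Theorem~\ref{unifpoinc} and standard Poincar\'e--Wirtinger estimates on the fixed cube $Q$.
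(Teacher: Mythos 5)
Your proposal is correct and follows essentially the same route as the paper: cell-by-cell decomposition of $\mathbb{R}^d$, splitting $\mathbb{Z}^d$ along the finite set $\mathcal{K}$, applying Theorem~\ref{unifpoinc} on each cell $Q_k$ with $k\notin\mathcal{K}$ via the uniform box $\mathcal{R}_k\subset\mathcal{O}_k\cap\mathcal{O}_k^{\mathrm{per}}$ from Lemma~\ref{H3}, and absorbing the finitely many exceptional cells and the discrepancy between $\widetilde{W}$ and $\widetilde{w}$ (controlled by $\|w^{\mathrm{per}}\|_{W^{1,\infty}}$ and $\sum_k|\mathcal{O}_k\setminus\mathcal{O}_k^{\mathrm{per}}|$) into $C_1$. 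The only cosmetic differences are that you treat the cells $k\in\mathcal{K}$ by Poincar\'e--Wirtinger with a mean over a ball in $\mathcal{O}_k$ where the paper uses the classical Poincar\'e inequality for $\widetilde{w}+w^{\mathrm{per}}$ vanishing on $\partial\mathcal{O}_k$, and that you derive the bound for $\widetilde{W}$ first and then pass to $\widetilde{w}$, whereas the paper runs both in parallel.
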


\begin{proof}
Fix $\widetilde{w} \in V$ and extend $\widetilde{w}$ by $-w^{\mathrm{per}}$ in $\mathcal{O}$. This gives a function $\widetilde{W} \in H^1(\mathbb{R}^d)$. Note that 
$$\forall k \in \mathbb{Z}^d, \ \widetilde{W} = 0 \ \mathrm{in} \ \mathcal{O}_k \cap \mathcal{O}_k^{\mathrm{per}}.$$ Fix $k \in \mathbb{Z}^d$, there are two cases :

\textit{Case 1} : $k \in \mathcal{K}$, that is $\mathcal{O}_k \cap \mathcal{O}_k^{\mathrm{per}} = \emptyset$. Then $\widetilde{w} + w^{\mathrm{per}} = 0$ on $\partial\mathcal{O}_k$. Thus classical Poincar\'e inequality gives the existence of $C_k = C(Q_k \setminus \overline{\mathcal{O}_k}, \partial \mathcal{O}_k)$ satisfying $C_k \geq 1$ such that
$$
\int_{Q_k \setminus \overline{\mathcal{O}_k}} (\widetilde{w} + w^{\mathrm{per}})^2 \leq C_k \int_{Q_k \setminus \overline{\mathcal{O}_k}} | \nabla \widetilde{w} + \nabla w^{\mathrm{per}}|^2.
$$
We get
\begin{multline}
\label{BA}
\int_{Q_k \setminus \overline{\mathcal{O}_k}} \widetilde{w}^2 \leq 2 C_k \int_{Q_k \setminus
  \overline{\mathcal{O}_k}} | \nabla \widetilde{w} |^2 + 2 C_k \|w^{\mathrm{per}}\|^2_{H^1(Q_k \setminus
  \overline{\mathcal{O}_k})} \\
 \leq 2 C_k \int_{Q_k \setminus \overline{\mathcal{O}_k}} | \nabla \widetilde{w} |^2 + 2 C_k \|w^{\mathrm{per}}\|^2_{W^{1,\infty}(Q)}.
\end{multline}
Now, the fact that $\widetilde{W} = -w^{\mathrm{per}}$ on $\mathcal{O}_k$ implies
\begin{equation}
\label{AB}
\int_{Q_k} \widetilde{W}^2 \leq 2 C_k \int_{Q_k} | \nabla \widetilde{W} |^2 + 2 C_k \|w^{\mathrm{per}}\|^2_{H^1(Q_k)}\leq 2 C_k \int_{Q_k} | \nabla \widetilde{W} |^2 + 2 C_k \|w^{\mathrm{per}}\|^2_{W^{1,\infty}(Q)}.
\end{equation}

\textit{Case 2 :} $k \notin \mathcal{K}$ so that $\mathcal{O}_k \cap \mathcal{O}^{\mathrm{per}}_k \neq \emptyset$. Note that $\widetilde{W} = 0$ on $\mathcal{O}_k \cap \mathcal{O}^{\mathrm{per}}_k$. We now use Lemma~\ref{H3}: there exists a ball $B_k \subset \mathcal{O}_k \cap \mathcal{O}_k^{\mathrm{per}}$ such $|B_k| \geq \rho$ and thus a box $\mathcal{R}_k \subset \mathcal{O}_k \cap \mathcal{O}_k^{\mathrm{per}}$ such that $|\mathcal{R}_k| \geq C(d)\rho$ where $C(d)$ depends only on the dimension. \\
Theorem \ref{unifpoinc} gives the existence of a constant $C = C(d)/\rho$ chosen $\geq 1$ such that
\begin{equation}
\label{AA}
\int_{Q_k} \widetilde{W}^2 \leq C \int_{Q_k} | \nabla \widetilde{W} |^2.
\end{equation}
Recall that 
$\displaystyle\int_{\mathcal{O}_k} |\nabla \widetilde{W} |^2 \leq \| \nabla w^{\mathrm{per}} \|^2_{L^{\infty}(Q)}  |\mathcal{O}_k \setminus \mathcal{O}_k^{\mathrm{per}} | \leq  \|w^{\mathrm{per}} \|^2_{W^{1,\infty}(Q)} |\mathcal{O}_k \setminus \mathcal{O}_k^{\mathrm{per}}|.$
We thus have
\begin{equation} 
\label{BB}
\int_{Q_k \setminus \overline{\mathcal{O}_k}} \widetilde{w}^2 \leq C \int_{Q_k \setminus \overline{\mathcal{O}_k}} | \nabla \widetilde{w} |^2 + C \|w^{\mathrm{per}} \|^2_{W^{1,\infty}(Q)} |\mathcal{O}_k \setminus \mathcal{O}_k^{\mathrm{per}}|.
\end{equation}

Define
$$
C_0 = \max\left( 2 \max_{k \in \mathcal{K}} C_k, C \right) \ \ \ \ \mathrm{and} \ \ \ \ C_1 = C_0\|w^{\mathrm{per}} \|^2_{W^{1,\infty}(Q)} \left[ |\mathcal{K}| + \sum_{k \in \mathbb{Z}^d} |\mathcal{O}_k \setminus \mathcal{O}_k^{\mathrm{per}} | \right]  < + \infty.
$$
We have proved (see equations \eqref{BA} and \eqref{BB}) that
$$
\forall k \in \mathbb{Z}^d, \ \int_{Q_k \setminus \overline{\mathcal{O}_k}} \widetilde{w}^2 \leq C_0 \int_{Q_k \setminus \overline{\mathcal{O}_k}} |\nabla \widetilde{w}|^2 + C_0 \|w^{\mathrm{per}} \|^2_{W^{1,\infty}(Q)} \delta_k,
$$
where $\delta_k = 1$ if $k \in \mathcal{K}$ and $\delta_k = |\mathcal{O}_k \setminus \mathcal{O}_k^{\mathrm{per}}|$ if $k \notin \mathcal{K}$.

Summing over $k$ gives the desired results for $\widetilde{w}$. Equations \eqref{AB} and \eqref{AA} give the analogous result for $\widetilde{W}$.
\end{proof}

Using Lemma~\ref{Poincare}, we prove the following:
\begin{lemme} 
\label{estim}
Suppose that the sequence $(\mathcal{O}_k)_{k \in \mathbb{Z}^d}$ satisfies Assumption \textbf{(A2)}. Let
$\widetilde{w} \in V$ and denote by $\widetilde W\in H^1(\RR^d)$ its extension (see Definition~\ref{def:extension_V}). Then, one has the following estimates:
\begin{equation}
\label{x}
\left| \int_{\Gamma_1} \left. \frac{\partial w^{\mathrm{per}}}{\partial n} \right|_{\mathrm{ext}} \widetilde{W} \right| \leq C + \frac{1}{4} \|\nabla \widetilde{W} \|_{L^2(\mathbb{R}^d)}^2, 
\end{equation}
where $C$ is a constant independent of $\widetilde{w}$,
\begin{equation}
\label{y}
\left| \int_{\mathbb{R}^d \setminus \mathcal{O}} \widetilde{g} \widetilde{W} \right| \leq \|\widetilde{g} \|_{L^2(\mathbb{R}^d)} \| \widetilde{W} \|_{L^2(\mathbb{R}^d)}
\end{equation}
and
\begin{equation}
\label{z}
\left| \int_{\mathcal{O}^{\mathrm{per}} \setminus \mathcal{O}} \widetilde{W} \right| \leq | \mathcal{O}^{\mathrm{per}} \setminus \mathcal{O}|^{\frac{1}{2}}  \|\widetilde{W} \|_{L^2(\mathbb{R}^d)}.
\end{equation}
\end{lemme}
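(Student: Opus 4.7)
Estimates (y) and (z) are immediate from the Cauchy--Schwarz inequality in $L^2(\mathbb{R}^d)$: for (y), both $\widetilde W$ and $\widetilde g$ lie in $L^2(\mathbb{R}^d)$; for (z), the crucial point is that $|\mathcal{O}^{\mathrm{per}} \setminus \mathcal{O}| < \infty$, which follows from \textbf{(A2)} since this set is contained in $\bigcup_k \mathcal{U}_k^{\mathrm{per}}(\alpha_k)$ whose measure is bounded by $C\sum_k \alpha_k < \infty$.

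For the substantive estimate (x), my plan rests on two uniform ingredients. First, elliptic regularity applied to \eqref{eq:periodic_corrector} with $\partial\mathcal{O}_0^{\mathrm{per}} \in C^{1,\gamma}$ yields $w^{\mathrm{per}} \in W^{1,\infty}(Q)$, hence $\left.\partial w^{\mathrm{per}}/\partial n\right|_{\mathrm{ext}}$ is uniformly bounded on each $\partial \mathcal{O}_k^{\mathrm{per}}$ (by periodicity). Second, the boundary condition $\widetilde W = -w^{\mathrm{per}}$ on $\partial \mathcal{O}_k$ together with \textbf{(A2)} (which places $\partial \mathcal{O}_k$ inside $\overline{\mathcal{U}_k^{\mathrm{per}}(\alpha_k)}$, the $\alpha_k$-tube around $\partial\mathcal{O}_k^{\mathrm{per}}$ where $w^{\mathrm{per}}$ vanishes) yields, by the mean-value inequality,
\[
\|\widetilde W\|_{L^\infty(\partial \mathcal{O}_k)} \leq \|\nabla w^{\mathrm{per}}\|_{L^\infty}\,\alpha_k \leq C\alpha_k.
\]

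To transfer this control to $\Gamma_1^k \subset \partial\mathcal{O}_k^{\mathrm{per}}$, I use a thin-collar argument in Fermi coordinates near $\partial \mathcal{O}_k^{\mathrm{per}}$. For each $x \in \Gamma_1^k$, the normal fiber through $x$ meets $\partial \mathcal{O}_k$ at some point $y(x)$ with $|x-y(x)|\leq \alpha_k$: by \textbf{(A2)} the inward endpoint of the fiber of length $\alpha_k$ lies in $\mathcal{O}_k^{\mathrm{per},-}(\alpha_k)\subset\mathcal{O}_k$, whereas the outward endpoint lies outside $\mathcal{O}_k^{\mathrm{per},+}(\alpha_k)\supset\mathcal{O}_k$, so continuity gives a crossing. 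The fundamental theorem of calculus and Cauchy--Schwarz on the line integral then produce
\[
\int_{\Gamma_1^k}|\widetilde W|^2 \leq 2C^2\alpha_k^2\,|\Gamma_1^k| + 2\alpha_k\int_{\mathcal{U}_k^{\mathrm{per}}(\alpha_k)}|\nabla \widetilde W|^2 \leq C'\bigl(\alpha_k^2 + \alpha_k\|\nabla \widetilde W\|^2_{L^2(Q_k)}\bigr),
\]
using that $|\Gamma_1^k|\leq|\partial\mathcal{O}_0^{\mathrm{per}}|$ uniformly in $k$. One Cauchy--Schwarz on $\Gamma_1^k$, the $L^\infty$ bound on $\left.\partial w^{\mathrm{per}}/\partial n\right|_{\mathrm{ext}}$, and summation in $k$ (with one more Cauchy--Schwarz on the cross term $\sum_k\sqrt{\alpha_k}\|\nabla\widetilde W\|_{L^2(Q_k)}\leq(\sum_k\alpha_k)^{1/2}\|\nabla\widetilde W\|_{L^2(\mathbb{R}^d)}$) then deliver
\[
\Bigl|\int_{\Gamma_1}\left.\tfrac{\partial w^{\mathrm{per}}}{\partial n}\right|_{\mathrm{ext}}\widetilde W\Bigr| \leq C_0\sum_k\alpha_k + C_1\Bigl(\sum_k\alpha_k\Bigr)^{1/2}\|\nabla\widetilde W\|_{L^2(\mathbb{R}^d)},
\]
which is finite by \textbf{(A2)}; Young's inequality $ab\leq a^2 + b^2/4$ finishes the proof of (x).

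The main obstacle is the thin-collar trace estimate: the normal-coordinate parameterization requires $\alpha_k$ below the reach of $\partial \mathcal{O}_k^{\mathrm{per}}$ (positive and $k$-independent by \textbf{(A1)} and periodicity), so the finitely many $k$ with $\alpha_k$ above this threshold---finite because $\alpha\in\ell^1(\mathbb{Z}^d)$ forces $\alpha_k\to 0$---must be handled separately by a generic trace inequality on $Q_k$ and absorbed into the constant. The auxiliary lemmas \textbf{H1}--\textbf{H3} from the Appendix presumably package the geometric facts needed for this step.
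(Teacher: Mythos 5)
Your treatment of \eqref{y} and \eqref{z} matches the paper: both are direct Cauchy--Schwarz, with $|\mathcal{O}^{\mathrm{per}}\setminus\mathcal{O}|<\infty$ supplied by Assumption \textbf{(A2)} via Lemma~\ref{H1}. For \eqref{x}, however, you take a genuinely different route from the paper, and that route has a gap. The paper never transfers control from $\partial\mathcal{O}_k$ to $\Gamma_1^k$ through a collar: it applies the $W^{1,1}(\mathcal{O}_k^{\mathrm{per}})\to L^1(\partial\mathcal{O}_k^{\mathrm{per}})$ trace theorem on the \emph{fixed periodic inclusion} (a single domain up to translation, hence a uniform constant), bounding $\int_{\Gamma_1^k}|\widetilde W|$ by $C\bigl(\|\widetilde W\|_{L^1(\mathcal{O}_k^{\mathrm{per}})}+\|\nabla\widetilde W\|_{L^1(\mathcal{O}_k^{\mathrm{per}})}\bigr)$. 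Since $\widetilde W\equiv 0$ on $\mathcal{O}_k\cap\mathcal{O}_k^{\mathrm{per}}$, these $L^1$ integrals are supported on $\mathcal{O}_k^{\mathrm{per}}\setminus\mathcal{O}_k$, and Cauchy--Schwarz yields the factor $|\mathcal{O}_k^{\mathrm{per}}\setminus\mathcal{O}_k|^{1/2}$, whose squares are summable by Lemma~\ref{H1}; Young's inequality and Lemma~\ref{Poincare} then produce the $\tfrac14$. The smallness in $k$ thus comes from the measure of the mismatch set, not from the geometry of a tubular neighbourhood.

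The gap in your version is the Fermi-coordinate step. You need (i) the inward normal segment of length $\alpha_k$ issued from $x\in\Gamma_1^k$ to terminate in $\mathcal{O}_k^{\mathrm{per},-}(\alpha_k)$, and (ii) the map $(x,s)\mapsto x-s\,n(x)$ to be an a.e.\ injection of $\Gamma_1^k\times(0,\alpha_k)$ into the collar with controlled Jacobian. Both require $\alpha_k$ to be below the reach of $\partial\mathcal{O}_0^{\mathrm{per}}$, and you assert this reach is positive ``by \textbf{(A1)} and periodicity''. Periodicity gives $k$-independence, but $C^{1,\gamma}$ regularity with $\gamma<1$ does \emph{not} give positive reach: such a boundary can have unbounded curvature (locally the graph of $|t|^{1+\gamma}$ already has zero reach from the concave side), so normal fibers may cross at arbitrarily small scales, the coarea Jacobian is uncontrolled, and the point $x-\alpha_k n(x)$ need not satisfy $\mathrm{dist}(\cdot,\partial\mathcal{O}_k^{\mathrm{per}})>\alpha_k$ (even with positive reach its distance is exactly $\alpha_k$, so it lies on the boundary of $\mathcal{O}_k^{\mathrm{per},-}(\alpha_k)$ rather than inside, and the intermediate-value crossing argument needs a strictly longer fiber). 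Repairing this requires either strengthening the regularity to $C^{1,1}$ or replacing the fiberwise argument by a trace inequality that avoids normal coordinates --- at which point one is essentially back to the paper's proof. It is worth noting that your intermediate bound, were it available, would control \eqref{x} by $C+C\|\nabla\widetilde W\|_{L^2(\mathbb{R}^d)}$ alone and so would dispense with the appeal to Lemma~\ref{Poincare}; that would be a real simplification, but it is not justified under the stated hypotheses.
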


\begin{proof}
Fix $\widetilde{w} \in V$. Let us first show that $\widetilde{W} \in H^1(\mathbb{R}^d)$. Write
\begin{equation}
\int_{\mathcal{O}} \widetilde{W}^2 + \int_{\mathcal{O}} |\nabla \widetilde{W} |^2  = 
\int_{\mathcal{O} \setminus \mathcal{O}^{\mathrm{per}}} (w^{\mathrm{per}})^2 + \int_{\mathcal{O} \setminus \mathcal{O}^{\mathrm{per}}} |\nabla w^{\mathrm{per}} |^2  \leq \|w^{\mathrm{per}}\|^2_{W^{1,\infty}(Q)} \sum_{k \in \mathbb{Z}^d} |\mathcal{O}_k \setminus \mathcal{O}^{\mathrm{per}}_k|.
\label{Eqintperfo}
\end{equation}
By Lemma \ref{H1}, we conclude that $\widetilde{W} \in H^1(\mathcal{O})$. This proves that $\widetilde{W} \in H^1(\mathbb{R}^d)$.

We now prove estimate \eqref{x}. Standard elliptic regularity implies $\left.\frac{\partial
    w^{\mathrm{per}}}{\partial n}\right|_{\mathrm{ext}} \in L^{\infty}(\partial \mathcal{O}^{\mathrm{per}})$. We
apply the trace theorem \cite[Theorem 1, p 272]{evans} for $p=1$ to the open subset $\mathcal{O}^{\mathrm{per}}_0$ (and thus to $\mathcal{O}_k^{\mathrm{per}}$ by periodicity with the same constant):
\begin{multline}
\label{57}
\left| \int_{\Gamma_1} \left. \frac{\partial w^{\mathrm{per}}}{\partial n} \right|_{\mathrm{ext}} \widetilde{w} \right|  \leq \sum_{k \in \mathbb{Z}^d} \int_{\Gamma_1^k} \left| \left. \frac{\partial w^{\mathrm{per}}}{\partial n} \right|_{\mathrm{ext}} \widetilde{w} \right| 
 \leq \sum_{k \in \mathbb{Z}^d} \int_{\partial \mathcal{O}^{\mathrm{per}}_k} \left| \left. \frac{\partial w^{\mathrm{per}}}{\partial n} \right|_{\mathrm{ext}} \widetilde{W} \right| \\
 \leq  \left\| \left. \frac{\partial w^{\mathrm{per}}}{\partial n} \right|_{\mathrm{ext}} \right\|_{L^{\infty}(\partial \mathcal{O}^{\mathrm{per}}_0)} \sum_{k \in \mathbb{Z}^d} \int_{\partial \mathcal{O}^{\mathrm{per}}_k} |\widetilde{W}| 
\leq C(w^{\mathrm{per}},\mathcal{O}^{\mathrm{per}}) \sum_{k \in \mathbb{Z}^d} \left( \int_{\mathcal{O}^{\mathrm{per}}_k} |\widetilde{W}| + \int_{\mathcal{O}^{\mathrm{per}}_k} | \nabla \widetilde{W} | \right). 
\end{multline} 
Now, recall that $\widetilde{W} = 0$ in $\mathcal{O}_k \cap \mathcal{O}^{\mathrm{per}}_k$, so that using
successively the Cauchy-Schwarz inequality and trace continuity (see \cite[Theorem 1, p 272]{evans} with $p=2$), we have
$$
\left| \int_{\Gamma_1} \left. \frac{\partial w^{\mathrm{per}}}{\partial n} \right|_{\mathrm{ext}} \widetilde{W} \right| \leq C \sum_{k \in \mathbb{Z}^d} |\mathcal{O}_k^{\mathrm{per}} \setminus \mathcal{O}_k |^{1/2} \left(  \|\widetilde{W} \|_{L^2(\mathcal{O}^{\mathrm{per}}_k)} +  \| \nabla \widetilde{W} \|_{L^2(\mathcal{O}^{\mathrm{per}}_k)} \right).  
$$
We use the inequality $ab \leq D \frac{a^2}{2} + \frac{ b^2}{2D}$ with $D$ to be chosen later:
\begin{equation}
\label{58}
\left| \int_{\Gamma_1} \left. \frac{\partial w^{\mathrm{per}}}{\partial n} \right|_{\mathrm{ext}} \widetilde{W} \right| \leq \frac{CD}{2} \sum_{k \in \mathbb{Z}^d} |\mathcal{O}_k^{\mathrm{per}} \setminus \mathcal{O}_k | + \frac{C}{D} \sum_{k \in \mathbb{Z}^d} \left(  \|\widetilde{W} \|^2_{L^2(\mathcal{O}^{\mathrm{per}}_k)} +  \| \nabla \widetilde{W} \|_{L^2(\mathcal{O}^{\mathrm{per}}_k)}^2 \right). 
\end{equation}
Thus,
$$
\left| \int_{\Gamma_1} \left. \frac{\partial w^{\mathrm{per}}}{\partial n} \right|_{\mathrm{ext}} \widetilde{W} \right| \leq \frac{CD}{2} \sum_{k \in \mathbb{Z}^d} |\mathcal{O}_k^{\mathrm{per}} \setminus \mathcal{O}_k | + \frac{C}{D}  \left(  \|\widetilde{W} \|^2_{L^2(\mathbb{R}^d)} +  \| \nabla \widetilde{W} \|_{L^2(\mathbb{R}^d)}^2 \right).
$$
Lemma \ref{Poincare} implies
$$
\frac{C}{D}  \left(  \|\widetilde{W} \|^2_{L^2(\mathbb{R}^d)} +  \| \nabla \widetilde{W} \|_{L^2(\mathbb{R}^d)}^2 \right) \leq \frac{2CC_0}{D}    \| \nabla \widetilde{W} \|_{L^2(\mathbb{R}^d)}^2 + \frac{CC_1}{D}.
$$
Choosing $D = 8CC_0$ yields finally
\begin{equation}
\left| \int_{\partial \mathcal{O}^{\mathrm{per}}} \left. \frac{\partial w^{\mathrm{per}}}{\partial n} \right|_{\mathrm{ext}} \widetilde{W} \right| \leq C \sum_{k \in \mathbb{Z}^d} |\mathcal{O}_k^{\mathrm{per}} \setminus \mathcal{O}_k | + C + \frac{1}{4} \| \nabla \widetilde{W} \|_{L^2(\mathbb{R}^d)}^2, 
\label{2}
\end{equation}
with $C$ being a constant independent of $\widetilde{w}$. We infer \eqref{x} thanks to Lemma \ref{H1}.

The two last estimates \eqref{y} and \eqref{z} are consequences of the Cauchy-Schwarz inequality.
\end{proof}

\begin{remarque}\label{rq:continuite_forme_lineaire}
Let $v \in H^1_0(\mathbb{R}^d \setminus \overline{\mathcal{O}})$. Computations \eqref{57}-\eqref{58} with $\widetilde{w}$ replaced by $v$ and $D=1$ are valid and give
$$
\left| \int_{\Gamma_1} \left. \frac{\partial w^{\mathrm{per}}}{\partial n} \right|_{\mathrm{ext}} v \right| \leq C |\mathcal{O^{\mathrm{per}}} \setminus \mathcal{O} | + C \| v \|_{H^1_0(\mathbb{R}^d \setminus \overline{ \mathcal{O}})}^2.
$$
Thus, the linear form $v \mapsto \int_{\Gamma_1} \left. \frac{\partial w^{\mathrm{per}}}{\partial n} \right|_{\mathrm{ext}} v$ is continuous on $H^1_0(\mathbb{R}^d \setminus \overline{\mathcal{O}})$. 
\end{remarque}

First, we prove below that the minimization space $V$ is not empty: 
\begin{lemme}
Let $(\mathcal{O}_k)_{k \in \mathbb{Z}^d}$ satisfy Assumption \textbf{(A1)} and Assumption \textbf{(A2)}. 
Then $V$ defined by \eqref{min} is not empty.
\label{nonempty}
\end{lemme}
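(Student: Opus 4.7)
The plan is to construct an explicit element of $V$ by adding up local corrections, one per perforated cell, that glue to $-w^{\mathrm{per}}$ on $\partial\mathcal O$. Concretely, for each $k\in\ZZ^d$ I will choose a smooth cutoff $\chi_k$ satisfying $\chi_k\equiv 1$ on $\mathcal{O}_k^{\mathrm{per},+}(\alpha_k)$ and $\chi_k\equiv 0$ outside $\mathcal{O}_k^{\mathrm{per},+}(2\alpha_k)$, with $|\nabla\chi_k|\le C/\alpha_k$, and set $\widetilde W_k := -\chi_k\, w^{\mathrm{per}}$ (recalling that $w^{\mathrm{per}}$ is extended by $0$ inside $\mathcal{O}^{\mathrm{per}}$). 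Assumption \textbf{(A2)} guarantees $\partial\mathcal{O}_k\subset\mathcal{O}_k^{\mathrm{per},+}(\alpha_k)$, so $\widetilde W_k=-w^{\mathrm{per}}$ on $\partial\mathcal{O}_k$ as required. The candidate will be $\widetilde W:=\sum_{k\in\ZZ^d}\widetilde W_k$, and $\widetilde w$ will be its restriction to $\RR^d\setminus\overline{\mathcal O}$.

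The key quantitative step is an $H^1$-estimate of each $\widetilde W_k$ in terms of $\alpha_k$. Elliptic $C^{1,\gamma}$ regularity gives $w^{\mathrm{per}}\in W^{1,\infty}(Q)$, and since $w^{\mathrm{per}}$ vanishes on $\partial\mathcal{O}_0^{\mathrm{per}}$, one has $|w^{\mathrm{per}}|\le C\,\alpha_k$ on the support of $\chi_k$. Combined with $|\mathcal{U}_k^{\mathrm{per}}(2\alpha_k)|\le C\alpha_k$ (for $\alpha_k$ below a fixed threshold $\alpha_0$, depending on $\mathrm{dist}(\mathcal{O}_0^{\mathrm{per}},\partial Q)$ and on $|\partial\mathcal{O}_0^{\mathrm{per}}|$), this yields
\[
\|\widetilde W_k\|_{L^2(\RR^d)}^2\le C\alpha_k^{3},\qquad \|\nabla\widetilde W_k\|_{L^2(\RR^d)}^2\le C\alpha_k,
\]
with $C$ independent of $k$. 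For $k$ with $\alpha_k\ge\alpha_0$, Lemma~\ref{H2} (or directly $(\alpha_k)\in\ell^1$) shows that this happens only for finitely many indices, and for each of them I pick any smooth cutoff $\chi_k$ of compact support equal to $1$ on a neighborhood of $\overline{\mathcal{O}_k}$ — so $\widetilde W_k\in H^1(\RR^d)$ trivially for those $k$.

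For $\alpha_k<\alpha_0$ the support of $\widetilde W_k$ lies inside $Q_k$, hence the supports are pairwise disjoint, and
\[
\|\widetilde W\|_{H^1(\RR^d)}^2 \le \sum_{k\in\ZZ^d}\|\widetilde W_k\|_{H^1(\RR^d)}^2 \le C\sum_{k\in\ZZ^d}\alpha_k + (\text{finite remainder}) < +\infty
\]
by Assumption \textbf{(A2)}. Restricting $\widetilde W$ to $\RR^d\setminus\overline{\mathcal O}$ gives an $H^1$ function with trace $-w^{\mathrm{per}}$ on $\partial\mathcal O$, i.e. an element of $V$. The main technical point to be careful about is the uniformity of the constant in the estimates above: both $\|w^{\mathrm{per}}\|_{W^{1,\infty}(Q)}$ and the surface measure $|\partial\mathcal{O}_0^{\mathrm{per}}|$ are $k$-independent by periodicity, which is what makes the single sum $\sum_k\alpha_k$ control everything and lets $\ell^1$ suffice.
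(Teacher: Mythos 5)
Your construction is correct and is essentially the paper's own proof: a per-cell correction $-\chi_k\,w^{\mathrm{per}}$ with a cutoff at scale $\alpha_k$, the Lipschitz bound $|w^{\mathrm{per}}|\le C\alpha_k$ near $\mathcal{O}_k^{\mathrm{per}}$, the Minkowski-content estimate $|\mathcal{U}_k^{\mathrm{per}}(2\alpha_k)|\le C\alpha_k$, and summation using $(\alpha_k)\in\ell^1(\mathbb{Z}^d)$, with the finitely many degenerate cells treated separately. The only immaterial differences are that the paper anchors $\chi_k$ to $\mathcal{O}_k$ itself (via $\varepsilon_k=\min(\alpha_k,\delta_0/2)$) rather than to $\mathcal{O}_k^{\mathrm{per},+}(\alpha_k)$, and obtains the $L^2$ part of the $H^1$ bound from Lemma~\ref{Poincare} instead of estimating $\|\widetilde W_k\|_{L^2}$ directly.
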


\begin{proof}
We want to build a function $\phi \in H^1(\mathbb{R}^d \setminus \overline{\mathcal{O}})$ satisfying the boundary conditions $\phi = - w^{\mathrm{per}}$ on $\partial \mathcal{O}$. We will first build $\phi$ on each cell $Q_k$. 

Let $k\in \ZZ^d$. Recall that $\delta_0^{\mathrm{per}} = \mathrm{dist}(\mathcal{O}_k^{\mathrm{per}},\partial Q_k)$ and that $\delta_0$ is defined in Lemma \ref{H2} of the Appendix. Set 

$$\varepsilon_k^{\mathrm{per}} = \min(2 \alpha_k, \delta_0^{\mathrm{per}}/2) \ \ \ \mathrm{and} \ \ \ \varepsilon_k = \min(\alpha_k, \delta_0/2)$$
 and note that since $\alpha_k \underset{|k| \rightarrow +\infty}{\longrightarrow} 0$, there exists $k_0$ such that
$$\forall |k| \geq k_0, \ \varepsilon_k^{\mathrm{per}} = 2 \alpha_k \ \ \ \mathrm{and} \ \ \ \varepsilon_k = \alpha_k.$$

Define $\mathcal{U}_k^{\mathrm{per}}(\varepsilon_k^{\mathrm{per}})$ (resp. $\mathcal{U}_k(\varepsilon_k)$) to be the $\varepsilon_k^{\mathrm{per}}$ (resp. $\varepsilon_k$) Minkowski content of $\partial \mathcal{O}_k^{\mathrm{per}}$ (resp. $\partial \mathcal{O}_k$) that is  
$$\mathcal{U}_k^{\mathrm{per}}(\varepsilon_k^{\mathrm{per}}) = \left\{x \in \mathbb{R}^d \ \mathrm{s.t} \ \mathrm{dist}(x,\partial \mathcal{O}_k^{\mathrm{per}}) < \varepsilon_k^{\mathrm{per}} \right\} \subset Q_k$$
and
$$\mathcal{U}_k(\varepsilon_k) = \left\{x \in \mathbb{R}^d \ \mathrm{s.t} \ \mathrm{dist}(x,\partial \mathcal{O}_k) < \varepsilon_k \right\} \subset Q_k.$$ 
Denote
$$\mathcal{O}_k^{\mathrm{per},+}(\varepsilon_k^{\mathrm{per}}) = \mathcal{O}_k^{\mathrm{per}} \cup \mathcal{U}_k^{\mathrm{per}}(\varepsilon_k^{\mathrm{per}}) = \left\{x \in \mathbb{R}^d \ \mathrm{s.t} \ \mathrm{dist}(x, \mathcal{O}_k^{\mathrm{per}}) < \varepsilon_k^{\mathrm{per}} \right\} \subset Q_k$$
and
$$\mathcal{O}_k^{+}(\varepsilon_k) = \mathcal{O}_k \cup \mathcal{U}_k(\varepsilon_k) = \left\{x \in \mathbb{R}^d \ \mathrm{s.t} \ \mathrm{dist}(x, \mathcal{O}_k) < \varepsilon_k \right\} \subset Q_k.$$
Now, let $\chi_k \in C^{\infty}_c(Q_k)$ be a cut-off function satisfying
$$
 \begin{cases}
 0 \leq \chi_k \leq 1 \ \ \mathrm{and} \ \ \chi_k \equiv 1 \ \ \mathrm{in} \ \ \mathcal{O}_k \\
 \mathrm{supp}(\chi_k) \subset \mathcal{O}_k^{+}, \ \ \mathrm{supp}(\nabla \chi_k) \subset \mathcal{U}_k(\varepsilon_k) \\
 | \nabla \chi_k | \leq C / \varepsilon_k.
 \end{cases}
$$

We define $\phi_k = - \chi_k w^{\mathrm{per}}$. It is clear that $\phi_k \in H^1(\mathbb{R}^d)$ and that $\phi_k = - w^{\mathrm{per}}$ on $\partial \mathcal{O}_k$. 

One defines
 $$\phi(x) = \sum_{k \in \mathbb{Z}^d} \phi_k(x) = \sum_{k \in \mathbb{Z}^d} \phi_k(x) 1_{Q_k}(x).$$
 Note that since $\mathrm{supp}(\phi_k) \subset Q_k$, all terms but one (which depends on $x$) vanish in the above sum. Thus $\phi = - w^{\mathrm{per}}$ on $\partial \mathcal{O}$.

 Our goal is to prove that $\phi \in H^1(\mathbb{R}^d \setminus \overline{\mathcal{O}})$ to conclude the proof. By Lemma \ref{Poincare}, it is sufficient to show that $\nabla \phi \in L^2(\mathbb{R}^d \setminus \overline{\mathcal{O}})$. Showing this is equivalent to prove that   
 $$\sum_{k \in \mathbb{Z}^d} \| \nabla \phi_k \|_{L^2(\mathcal{U}_k(\varepsilon_k))}^2 < + \infty.$$
 We are thus left to estimate each term $\| \nabla \phi_k \|_{L^2(\mathcal{U}(\varepsilon_k))}$ where $k \in \mathbb{Z}^d$. We study these terms only when $|k| \geq k_0$ and $k \notin \mathcal{K}$ where $\mathcal{K}$ is defined in Lemma \ref{H1} of the Appendix (there are only a finite number of terms $k$ such that $k \in \mathcal{K}$ and $|k| < k_0$). \\

Let $k \in \mathbb{Z}^d$ such that $|k| \geq k_0$ and $k \notin \mathcal{K}$ that is $\mathcal{O}_k \cap \mathcal{O}_k^{\mathrm{per}} \neq \emptyset$. One has - using Assumption \textbf{(A2)} 
- the inclusions, 
\begin{equation}
\label{inclusionss}
\mathcal{O}_k \subset \mathcal{O}_k^{+}(\alpha_k) \subset \mathcal{O}_k^{\mathrm{per,+}}(2 \alpha_k) \ \ \ \ \mathrm{and} \ \ \ \ \mathcal{U}_k(\alpha_k) \subset \mathcal{U}_k^{\mathrm{per}}(2\alpha_k). 
\end{equation}
We write 
$$
\begin{aligned}
\int_{\mathcal{U}_k(\alpha_k)} \left| \nabla \left(\chi_k w^{\mathrm{per}} \right) \right|^2 & \leq 2 \int_{\mathcal{U}_k(\alpha_k)} | \nabla w^{\mathrm{per}} |^2 |\chi_k|^2 + 2 \int_{\mathcal{U}_k(\alpha_k)} | w^{\mathrm{per}} |^2 | \nabla \chi_k|^2 \\
& \leq 2 \| \nabla w^{\mathrm{per}} \|^2_{L^{\infty}(\mathcal{U}_k(\alpha_k))} |\mathcal{U}_k(\alpha_k)| + 2 \|w^{\mathrm{per}} \|^2_{L^{\infty}(\mathcal{U}_k(\alpha_k))} \frac{C^2}{\alpha_k^2} |\mathcal{U}_k(\alpha_k)|.
\end{aligned}
$$
Using that $\nabla w^{\mathrm{per}}\in L^\infty(\RR^d)$, that
$d\left(\mathcal{U}_k(\alpha_k),\mathcal{O}^{\mathrm{per}}_k\right)\leq \alpha_k$ and that $w^{\mathrm{per}} = 0$ in $\mathcal{O}_k^{\mathrm{per}}$, we infer
$$\| w^{\mathrm{per}} \|_{L^{\infty}(\mathcal{U}_k(\alpha_k))} \leq 2 \alpha_k \| \nabla w^{\mathrm{per}} \|_{L^{\infty}(Q)}.$$
We conclude that 
$$
\int_{\mathcal{U}_k(\alpha_k)} |\nabla \phi_k|^2 \leq C| \mathcal{U}_k(\alpha_k) |  + C |\mathcal{U}_k(\alpha_k) | \alpha_k^2 / \alpha_k^2 \leq C | \mathcal{U}_k(\alpha_k) |.
$$
Using \eqref{inclusionss}, this yields
$$\int_{\mathcal{U}_k(\alpha_k)} |\nabla \phi_k|^2 \leq C | \mathcal{U}_k^{\mathrm{per}}(2\alpha_k)|.$$
We deduce that for $k$ large enough, $\int_{\mathcal{U}_k(\alpha_k)} |\nabla \phi_k|^2 \leq 2 C\alpha_k$ (see \eqref{Mesth}). Since $(\alpha_k)_{k \in \mathbb{Z}^d} \in \ell^1(\mathbb{Z}^d)$, one concludes that $\phi \in H^1(\mathbb{R}^d \setminus \overline{\mathcal{O}})$.
\end{proof}

\begin{proposition}
\label{propmin}
Under the assumptions \textbf{(A1)} and \textbf{(A2)}, the minimization Problem \eqref{minPB} has a solution.
\end{proposition}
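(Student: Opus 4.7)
The plan is to apply the direct method of the calculus of variations to the functional
$$ J(\widetilde{w}) := \frac{1}{2}\int_{\mathbb{R}^d \setminus \overline{\mathcal{O}}} |\nabla \widetilde{w}|^2 + \int_{\Gamma_1} \left.\frac{\partial w^{\mathrm{per}}}{\partial n}\right|_{\mathrm{ext}} \widetilde{w} - \int_{\mathbb{R}^d \setminus \overline{\mathcal{O}}} \widetilde{g}\, \widetilde{w}, $$
on the affine space $V$ defined by \eqref{min}. Lemma~\ref{nonempty} already guarantees that $V\neq\emptyset$, so $\inf_V J < +\infty$. The four standard ingredients I would verify are: (i) $J$ is well defined on $V$ and bounded from below, (ii) $J$ is coercive in the sense that $J(\widetilde w_n)\to +\infty$ whenever $\|\nabla \widetilde w_n\|_{L^2}\to +\infty$, (iii) weakly lower semi-continuity of $J$, (iv) closedness of $V$ under weak $H^1$ convergence.

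The main quantitative step is (i)–(ii), which I would handle by feeding the two non-quadratic terms of $J$ into the estimates of Lemma~\ref{estim}. For any $\widetilde w\in V$, denoting by $\widetilde W$ its extension on $\mathbb{R}^d$ (Definition~\ref{def:extension_V}), estimate \eqref{x} gives
$$\left|\int_{\Gamma_1} \left.\tfrac{\partial w^{\mathrm{per}}}{\partial n}\right|_{\mathrm{ext}} \widetilde{w}\right| \leq C + \tfrac{1}{4}\|\nabla \widetilde W\|_{L^2(\mathbb{R}^d)}^2,$$
and since $\|\nabla \widetilde W\|_{L^2(\mathbb{R}^d)}^2 = \|\nabla \widetilde w\|_{L^2(\mathbb{R}^d\setminus\overline{\mathcal O})}^2 + \|\nabla w^{\mathrm{per}}\|_{L^2(\mathcal O\setminus\mathcal O^{\mathrm{per}})}^2$, the last term being finite by $w^{\mathrm{per}}\in W^{1,\infty}(Q)$ and Lemma~\ref{H1}, this absorbs only a quarter of the quadratic form. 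Estimate \eqref{y}, Lemma~\ref{Poincare} (applied to $\widetilde W$), and Young's inequality $ab\leq \eta a^2 + \tfrac{1}{4\eta}b^2$ then turn $|\int \widetilde g\,\widetilde w|$ into a term that I can absorb in $\tfrac18\|\nabla\widetilde w\|^2$ plus a constant. Combining these bounds yields
$$ J(\widetilde w) \geq \tfrac{1}{8}\|\nabla\widetilde w\|_{L^2(\mathbb R^d\setminus\overline{\mathcal O})}^2 - C, $$
which simultaneously gives (i) and (ii).

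Given coercivity, I pick a minimizing sequence $(\widetilde w_n)\subset V$. By the previous inequality, $(\|\nabla \widetilde w_n\|_{L^2})$ is bounded, and Lemma~\ref{Poincare} then yields boundedness of $\widetilde w_n$ in $H^1(\mathbb{R}^d\setminus\overline{\mathcal O})$ (equivalently of $\widetilde W_n$ in $H^1(\mathbb{R}^d)$). Extracting a subsequence, there exists $\widetilde w_\star\in H^1(\mathbb{R}^d\setminus\overline{\mathcal O})$ with $\widetilde w_n\rightharpoonup \widetilde w_\star$ weakly in $H^1$. The closedness of $V$ follows from weak continuity of the trace operator cell by cell: $\widetilde w_n|_{\partial \mathcal O_k} \rightharpoonup \widetilde w_\star|_{\partial\mathcal O_k}$ in $H^{1/2}(\partial\mathcal O_k)$, so $\widetilde w_\star|_{\partial\mathcal O}=-w^{\mathrm{per}}$. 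The functional $J$ is the sum of a non-negative quadratic form and a continuous linear form (the latter being continuous on $H^1$ by Remark~\ref{rq:continuite_forme_lineaire} and Cauchy–Schwarz); hence $J$ is convex and strongly continuous on $V$, therefore weakly l.s.c., giving $J(\widetilde w_\star)\leq \liminf_n J(\widetilde w_n) = \inf_V J$. Thus $\widetilde w_\star$ is a minimizer.

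The main obstacle is coercivity: the boundary integral on $\Gamma_1$ is a priori only controlled by a trace norm, and summing the contributions over all cells $k$ could a priori produce a term of the same order as $\|\nabla\widetilde w\|_{L^2}^2$ without the crucial small factor $\tfrac14$. This is precisely why Lemma~\ref{estim} exploits $\widetilde W=0$ on $\mathcal O_k\cap \mathcal O_k^{\mathrm{per}}$ together with Assumption \textbf{(A2)} (via the $\ell^1$ summability of $|\mathcal O_k^{\mathrm{per}}\setminus\mathcal O_k|$) to produce a sub-quadratic bound, and the trick of choosing the Young constant large enough in its proof is what makes the absorption possible. All the other steps are standard.
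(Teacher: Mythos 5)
Your proposal is correct and follows essentially the same route as the paper: the direct method on $V$, with coercivity obtained by absorbing the $\Gamma_1$-term via Lemma~\ref{estim} and the $\widetilde g$-term via Lemma~\ref{Poincare}, then weak compactness, weak closedness of $V$ through the trace operator, and weak lower semi-continuity of $J$ (the paper handles the boundary term by applying the continuity of the linear form of Remark~\ref{rq:continuite_forme_lineaire} to $\widetilde w_n-\phi\in H^1_0(\RR^d\setminus\overline{\mathcal O})$, which is the precise form of your "continuous linear part" step, since that continuity is only established on $H^1_0$ and not on all of $H^1$). No gap.
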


\begin{proof}
Let $(\widetilde{w_n})_{n \in \mathbb{N}} \subset V$ be a minimizing sequence of Problem \eqref{minPB} which exists by Lemma \ref{nonempty}, that is
$$
\frac{1}{2} \int_{\mathbb{R}^d \setminus \overline{\mathcal{O}}} |\nabla \widetilde{w_n}|^2 + \int_{\Gamma_1} \left. \frac{\partial w^{\mathrm{per}}}{\partial n} \right|_{\mathrm{ext}} \widetilde{w_n} - \int_{\mathbb{R}^d \setminus \overline{\mathcal{O}}} \widetilde{g} \widetilde{w_n} 
\underset{n \rightarrow +\infty}{\longrightarrow} \inf_{u \in V} J(u).
$$
We extend each $\widetilde{w_n}$ by $- w^{\mathrm{per}}$ in the perforations and denote by $\widetilde{W_n}$ the
extension (see Definition~\ref{def:extension_V}). The sequence
$$
\frac{1}{2} \int_{\mathbb{R}^d} |\nabla \widetilde{W_n}|^2 + \int_{\Gamma_1} \left. \frac{\partial w^{\mathrm{per}}}{\partial n} \right|_{\mathrm{ext}} \widetilde{W_n} - \int_{\mathbb{R}^d \setminus \overline{\mathcal{O}}} \widetilde{g} \widetilde{W_n} 
$$
admits an upper bound independent of $n$.
We first prove that $\| \nabla \widetilde{W_n} \|_{L^2(\mathbb{R}^d)}$ is bounded independently of $n$. We use Lemma \ref{Poincare} and Lemma \ref{estim} to bound each term:
$$\left| \int_{\Gamma_1} \left. \frac{\partial w^{\mathrm{per}}}{\partial n} \right|_{\mathrm{ext}} \widetilde{W_n}
\right| \leq C + \frac{1}{4} \| \nabla \widetilde{W_n} \|^2_{L^2(\mathbb{R}^d)} \ ,$$
$$\left| \int_{\mathbb{R}^d \setminus \overline{\mathcal{O}}} \widetilde{g} \widetilde{W_n} \right| \leq C \|
\widetilde{W_n} \|_{L^2(\mathbb{R}^d)} \underset{\mathrm{Lemma} \ \ref{Poincare}}{\leq} C + C \|\nabla
\widetilde{W_n} \|_{L^2(\mathbb{R}^d)}\ ,$$
where $C$ denotes various constants independent of $n$. Hence, one gets
\begin{displaymath}
C  \geq   \frac{1}{2} \int_{\mathbb{R}^d} |\nabla \widetilde{W_n}|^2 + \int_{\partial \mathcal{O}^{\mathrm{per}}} \left. \frac{\partial w^{\mathrm{per}}}{\partial n} \right|_{\mathrm{ext}} \widetilde{W_n} - \int_{\mathbb{R}^d \setminus \overline{\mathcal{O}}} \widetilde{g} \widetilde{W_n} 
 \geq  \frac{1}{4} \| \nabla \widetilde{W_n} \|^2_{L^2(\mathbb{R}^d)} - C \| \nabla \widetilde{W_n} \|_{L^2(\mathbb{R}^d)} - C,
\end{displaymath}
and thus
$$
\| \nabla \widetilde{W_n} \|^2_{L^2(\mathbb{R}^d)} \leq C \| \nabla \widetilde{W_n} \|_{L^2(\mathbb{R}^d)} + C.
$$
This proves that $\| \nabla \widetilde{W_n} \|_{L^2(\mathbb{R}^d)}$ is bounded independently of $n$. With Lemma \ref{Poincare}, one deduces
that $\| \widetilde{W_n} \|_{H^1(\mathbb{R}^d)}$ is also bounded independently of $n$.

Thus, by weak compactness, there exists a weak limit $\widetilde{W} \in H^1(\mathbb{R}^d)$ such that
$$
\widetilde{W_n} \underset{H^1}{\longrightharpoonup} \widetilde{W} \quad \mathrm{and} \quad \widetilde{W_n} \underset{L^2_{\mathrm{loc}}}{\longrightarrow} \widetilde{W}.
$$

Denote $\widetilde{w} = \widetilde{W}_{|\mathbb{R}^d \setminus \mathcal{O}}$. We first show that $\widetilde{w} \in V$. \\
Strong convergence in $L^2_{\mathrm{loc}}$ and $\widetilde{W}_n = - w^{\mathrm{per}}$ in $\mathcal{O}_k$ imply
$\widetilde{W} = - w^{\mathrm{per}}$ in $\mathcal{O}_k$. For the boundary $\partial \mathcal{O}_k$, recall that the
trace operator $T_k$ (see \cite[Theorem 1, p 272]{evans} ) is weakly continuous from $H^1(\mathcal{O}_k)$ to $L^2(\partial \mathcal{O}_k)$. Thus
$$\left. \widetilde{w} \right|_{\partial \mathcal{O}_k} = T_k \widetilde{W} = - T_k w^{\mathrm{per}} = \left. - w^{\mathrm{per}} \right|_{\partial \mathcal{O}_k}.$$
Since this is true for all $k \in \mathbb{Z}^d$, we have proved that $\widetilde{w}_{| \partial \mathcal{O}} = - w^{\mathrm{per}}$. Moreover, $\widetilde{w} \in H^1(\mathbb{R}^d \setminus \overline{\mathcal{O}})$. Thus $\widetilde{w} \in V$. 

We can now pass to the limit. Since $w \ni H^1(\mathbb{R}^d \setminus \overline{\mathcal{O}}) \mapsto \int |\nabla w |^2$ is convex and continuous (in the strong norm), it is weakly lower semi-continuous and thus
\begin{equation}
\label{11}
 \int_{\mathbb{R}^d \setminus \overline{\mathcal{O}}} |\nabla \widetilde{w}|^2 \leq \liminf_{n \rightarrow +\infty} \int_{\mathbb{R}^d \setminus \overline{\mathcal{O}}} |\nabla \widetilde{w_n}|^2.
\end{equation} 
By weak $H^1-$convergence, since $\widetilde{g} \in L^2(\mathbb{R}^d \setminus \overline{\mathcal{O}})$,
\begin{equation}
\int_{\mathbb{R}^d \setminus \overline{\mathcal{O}}} \widetilde{g} \widetilde{w_n} \underset{n \rightarrow +\infty}{\longrightarrow} \int_{\mathbb{R}^d \setminus \overline{\mathcal{O}} }\widetilde{g} \widetilde{w}.
\label{12}
\end{equation}
 
Let us treat the remaining term. We first recall (see Remark~\ref{rq:continuite_forme_lineaire}) that the linear form 
$v \mapsto \int_{\Gamma_1}  \left. \frac{\partial w^{\mathrm{per}}}{\partial n} \right|_{\mathrm{ext}} v$ is
strongly and thus weakly continous on $H^1_0(\mathbb{R}^d \setminus \overline{\mathcal{O}})$. We apply this
continuity to $v_n = \widetilde w_n - \phi$, where $\phi$ was defined in the proof of Lemma \ref{nonempty}. Since
$\displaystyle \int_{\Gamma_1} \left.\frac{\partial w^{\mathrm{per}}}{\partial n}\right|_{\mathrm{ext}}\phi = 0,$
we deduce
\begin{equation}
\int_{\Gamma_1}  \left. \frac{\partial w^{\mathrm{per}}}{\partial n} \right|_{\mathrm{ext}} \widetilde{w_n} \underset{n \rightarrow +\infty}{\longrightarrow} \int_{\Gamma_1}  \left. \frac{\partial w^{\mathrm{per}}}{\partial n} \right|_{\mathrm{ext}} \widetilde{w}.
\label{13}
\end{equation}

Finally, collecting \eqref{11}, \eqref{12}
and \eqref{13} and letting $n \rightarrow +\infty$, we conclude that 
$$
\frac{1}{2} \int_{\mathbb{R}^d} |\nabla \widetilde{w}|^2 + \int_{\Gamma_1} \left. \frac{\partial w^{\mathrm{per}}}{\partial n} \right|_{\mathrm{ext}} \widetilde{w} - \int_{\mathbb{R}^d \setminus \overline{\mathcal{O}}} \widetilde{g} \widetilde{w} 
\leq \inf_{u \in V} J(u).
$$
This finishes the proof of existence.

\medskip

To conclude the proof, we prove uniqueness: let $\widetilde{w_1}$ and $\widetilde{w_2}$ be two weak solutions of
\eqref{minPB} (in the sense of Definition~\ref{defifaible}). We have that
$$
\forall v \in H^1_0(\mathbb{R}^d \setminus \overline{\mathcal{O}}), \ \ \ \int_{\mathbb{R}^d \setminus \overline{\mathcal{O}}} \nabla \widetilde{w_i} \cdot \nabla v + \int_{\Gamma_1} \left.\frac{\partial w^{\mathrm{per}}}{\partial n}\right|_{\mathrm{ext}} v - \int_{\mathbb{R}^d \setminus \overline{\mathcal{O}}} \widetilde{g} v 
= 0,
$$
for $i=1,2$. Substracting the two equations yields
$$
\forall v \in H^1_0(\mathbb{R}^d \setminus \overline{ \mathcal{O}}), \ \ \ \int_{\mathbb{R}^d \setminus \overline{\mathcal{O}}} \nabla (\widetilde{w_1} - \widetilde{w_2}) \cdot \nabla v = 0
$$

Since $\widetilde{w_1} - \widetilde{w_2} \in H^1_0(\mathbb{R}^d \setminus \overline{\mathcal{O}})$, we may choose $v = \widetilde{w_1}- \widetilde{w_2}$ in the previous expression. 
The Poincar\'e inequality on $Q \setminus \overline{\mathcal{O}}_0$ with $\Gamma = \partial \mathcal{O}_0$ implies $\widetilde{w_1} - \widetilde{w_2} = 0$.
\end{proof}

\begin{remarque}
We could also have applied Lax-Milgram's lemma to show that Problem \eqref{wtilde2} admits a weak solution. The
ingredients are basically the same. Coercivity of the bilinear form is a direct consequence of
Lemma~\ref{unifpoinc} (see \eqref{Po}). Continuity is proved using the same method as in the proof of
Proposition~\ref{propmin}, when passing to the limit in the minimizing sequence. 
\end{remarque}

\subsection{Proof of the convergence results}

\subsubsection{$H^1$ convergence}

\begin{proof}[Proof of Theorem~\ref{theo}]

 We first define the second order approximation of $u_{\varepsilon}$. Let $g = \11_{\mathbb{R}^d \setminus \mathcal{O}}$. With this choice of $g$, one has
$$\widetilde{g} = \11_{\mathcal{O}^{\mathrm{per}} \setminus \mathcal{O} } -  \11_{\mathcal{O} \setminus \mathcal{O}^{\mathrm{per}} } = \sum_{k \in \mathbb{Z}^d} \left( \11_{\mathcal{O}_k^{\mathrm{per}} \setminus \mathcal{O}_k} -  \11_{\mathcal{O}_k \setminus \mathcal{O}^{\mathrm{per}}_k } \right).$$
Moreover, Lemma \ref{H1} implies that $\widetilde{g} \in L^2(\mathbb{R}^d)$. Thus we can apply Theorem \ref{theocor} and get the existence of a unique function $\widetilde{w} \in H^1(\mathbb{R}^d \setminus \overline{\mathcal{O}})$ such that $w := w^{\mathrm{per}} + \widetilde{w}$ satisfies 
$$
\begin{cases}
\begin{aligned}
- \Delta w & = 1 \ \mathrm{in} \ \mathbb{R}^d \setminus \overline{\mathcal{O}} \\
w_{|\partial \mathcal{O}} & = 0.
\end{aligned}
\end{cases}
$$
in sense of distribution. Note that $w \in H^1_{\mathrm{loc}}(\mathbb{R}^d \setminus \mathcal{O})$.

Now, set
$$\phi_{\varepsilon} := u_{\varepsilon} - \varepsilon^2 w(\cdot/\varepsilon)f.$$
Since $f \in \mathcal{D}(\Omega)$, $w = 0$ on $\partial \mathcal{O}$ and $w \in H^1_{\mathrm{loc}}(\mathbb{R}^d  \setminus \mathcal{O})$, one gets that $\phi_{\varepsilon}\in H^1_0(\Omega_{\varepsilon})$.

We have, in the sense of distributions,
\begin{equation}
\label{227}
- \Delta \phi_{\varepsilon}  = f + \Delta w \left(\frac{\cdot}{\varepsilon} \right) f + 2 \varepsilon \nabla w \left( \frac{\cdot}{\varepsilon} \right) \cdot \nabla f + \varepsilon^2 w \left(\frac{\cdot}{\varepsilon}\right)  \Delta f  = f - f + \varepsilon g_{\varepsilon}  = \varepsilon g_{\varepsilon},
\end{equation}
where
$$g_{\varepsilon} =  2 \nabla w \left( \frac{\cdot}{\varepsilon} \right) \cdot \nabla f + \varepsilon w \left(\frac{\cdot}{\varepsilon}\right)  \Delta f.$$
Note that $\| g_{\varepsilon} \|_{L^2(\Omega_{\varepsilon})}$ is bounded independently of $\varepsilon$. 

Next, we multiply \eqref{227} by $\phi_{\varepsilon}$, integrate by parts and apply the Cauchy-Schwarz inequality:
$$\int_{\Omega_{\varepsilon}} |\nabla \phi_{\varepsilon}|^2 = \varepsilon \int_{\Omega_{\varepsilon}} g_{\varepsilon} \phi_{\varepsilon} \leq C \varepsilon \left( \int_{\Omega_{\varepsilon}}  \phi_{\varepsilon}^2 \right)^{1/2}.$$
Thanks to Lemma \ref{Poincmic2}, one concludes that 
$$\left( \int_{\Omega_{\varepsilon}} |\nabla \phi_{\varepsilon}|^2 \right)^{1/2} \leq C \varepsilon^2 \ \ \ \mathrm{and} \ \ \ \left( \int_{\Omega_{\varepsilon}} \phi_{\varepsilon}^2 \right)^{1/2} \leq C \varepsilon^3,$$
which concludes the proof.
\end{proof}

\subsubsection{$L^\infty$ convergence}

We first prove the following Lemma:

\begin{lemme}
\label{lem14}
Let $(\mathcal{O}_k)_{k \in \mathbb{Z}^d}$ be a sequence of open sets satisfying Assumptions
\textbf{(A1)-(A2)}. Let $w$ be the solution to \eqref{wtilde} with $g=1$. 
Then $w \in L^{\infty}\left(\mathbb{R}^d \setminus \overline{\mathcal{O}}\right)$. Moreover, if the $C^{1,\gamma}$ norms of the charts that flatten $\partial \mathcal{O}_k$ are uniformly bounded in $k$, we have that $\nabla w \in L^{\infty}\left(\mathbb{R}^d \setminus \overline{\mathcal{O}}\right)$. 
\end{lemme}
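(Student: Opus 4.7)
The plan is to decompose $w = w^{\mathrm{per}} + \widetilde{w}$ and to invoke local elliptic regularity estimates whose constants are uniform across $\mathbb{R}^d \setminus \overline{\mathcal{O}}$. The periodic part $w^{\mathrm{per}}$ is globally bounded, with bounded gradient, by classical Schauder estimates applied to \eqref{eq:periodic_corrector} combined with periodicity. Meanwhile, Theorem~\ref{theocor} gives $\widetilde{w} \in H^1(\mathbb{R}^d \setminus \overline{\mathcal{O}})$, which will be used only through the uniform finiteness of $\|\widetilde{w}\|_{L^2}$.

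First I would prove the $L^\infty$ bound on $w$. Since $-\Delta w = 1$ in $\mathbb{R}^d \setminus \overline{\mathcal{O}}$ with $w_{|\partial \mathcal{O}} = 0$, the local De Giorgi--Nash--Moser boundedness estimate, applied on balls $B_r(x_0)$ of some fixed radius $r$, yields
\[
\| w \|_{L^\infty(B_{r/2}(x_0) \setminus \overline{\mathcal{O}})} \leq C \left( \| w \|_{L^2(B_{r}(x_0) \setminus \overline{\mathcal{O}})} + 1 \right).
\]
The right-hand side is uniformly bounded in $x_0$, thanks to the trivial estimate $\|w\|_{L^2(B_r(x_0) \setminus \overline{\mathcal{O}})} \leq |B_r|^{1/2} \| w^{\mathrm{per}} \|_{L^\infty} + \|\widetilde{w}\|_{L^2(\mathbb{R}^d \setminus \overline{\mathcal{O}})}$. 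The uniformity of the constant $C$ in $x_0$ follows from Assumption \textbf{(A1)} together with Lemma~\ref{H3}, which provides a uniform lower bound on the volume of a box contained in each $\mathcal{O}_k$: this gives a uniform measure-density condition along $\partial \mathcal{O}$ and prevents the Moser constants from degenerating.

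Next I would upgrade to a uniform bound on $\nabla w$ under the extra hypothesis on the charts. Feeding the $L^\infty$ bound above into the classical Schauder $C^{1,\gamma}$ estimate up to the boundary yields
\[
\| \nabla w \|_{L^\infty(B_{r/4}(x_0) \setminus \overline{\mathcal{O}})} \leq C \left( \| w \|_{L^\infty(B_{r/2}(x_0) \setminus \overline{\mathcal{O}})} + 1 \right).
\]
The hypothesis that the $C^{1,\gamma}$ norms of the charts flattening $\partial \mathcal{O}_k$ are uniformly bounded in $k$ is exactly what makes the Schauder constant $C$ uniform in $x_0$. Combined with the previous step, this produces $\nabla w \in L^\infty(\mathbb{R}^d \setminus \overline{\mathcal{O}})$.

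The main obstacle I expect is not an analytic subtlety but the bookkeeping of uniformity of the elliptic constants as $x_0$ ranges over $\mathbb{R}^d \setminus \overline{\mathcal{O}}$. In the bulk region far from the defect, the perforations are close to the periodic ones (since $\alpha_k \to 0$), but Assumption \textbf{(A2)} alone says nothing about the smoothness of the parametrization of $\partial \mathcal{O}_k$, so the Schauder norms of the flattening charts could in principle degenerate along a subsequence of $k$. This is precisely why the extra chart hypothesis is imposed for the gradient bound but not for the mere $L^\infty$ bound, for which the more robust Moser iteration only needs the measure-density information already supplied by \textbf{(A1)}-\textbf{(A2)} and Lemma~\ref{H3}.
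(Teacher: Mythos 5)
Your gradient step is essentially the paper's: interior Schauder estimates away from $\partial\mathcal{O}$, then \cite[Corollary 8.36]{GT} up to $\partial\mathcal{O}_k$, with the uniform bound on the $C^{1,\gamma}$ norms of the flattening charts guaranteeing that the constants do not degenerate in $k$. That part is fine. The $L^\infty$ step, however, contains a genuine gap. You apply a De Giorgi--Nash--Moser local boundedness estimate \emph{up to the boundary} $\partial\mathcal{O}$ and claim that the constant is uniform in $x_0$ because Lemma~\ref{H3} ``provides a uniform measure-density condition along $\partial\mathcal{O}$''. It does not. Lemma~\ref{H3} produces, for each $k\notin\mathcal K$, a \emph{single} ball of volume $\geq\rho$ somewhere inside $\mathcal{O}_k\cap\mathcal{O}_k^{\mathrm{per}}$; this says nothing about the ratio $|B_r(x_0)\cap\mathcal{O}_k|/|B_r(x_0)|$ at an arbitrary boundary point $x_0\in\partial\mathcal{O}_k$. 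Assumption \textbf{(A1)} gives each $\mathcal{O}_k$ a $C^{1,\gamma}$ boundary, hence a density condition cell by cell, but with no uniformity in $k$ (the perforations could develop thinner and thinner $C^{1,\gamma}$ fingers as $|k|\to\infty$), which is exactly why the statement reserves the uniform-chart hypothesis for the gradient bound. So the uniformity of your Moser constant, as written, is unjustified.

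The conclusion is nonetheless reachable, in two ways. One repair of your route is to avoid any boundary condition on the domain altogether: since $w=0$ on $\partial\mathcal{O}$ in the trace sense and $-\Delta(\pm w)\leq 1$, the functions $\max(\pm w,0)$ extended by zero into $\mathcal{O}$ are $H^1_{\mathrm{loc}}(\mathbb{R}^d)$ subsolutions of $-\Delta v\leq 1$ on whole balls, so the \emph{interior} local boundedness estimate applies with constants depending only on $d$ and $r$. The paper takes a different and arguably cleaner path: it first bounds $w$ on the cell interfaces $\partial Q_k$ uniformly in $k$, by covering $\partial Q_k$ with finitely many balls that stay at distance $\geq\delta$ from all perforations (Lemma~\ref{H2}) and applying interior De Giorgi--Nash--Moser to the harmonic function $\widetilde w$ there, controlled by $\|\widetilde w\|_{L^2(\mathbb{R}^d\setminus\overline{\mathcal O})}$; it then runs a maximum principle on each $Q_k\setminus\overline{\mathcal{O}_k}$ with explicit quadratic barriers $w\pm\bigl(|x-k|^2/(2d)+\mathrm{const}\bigr)$, using the zero Dirichlet data on $\partial\mathcal{O}_k$. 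This barrier argument requires no regularity of $\partial\mathcal{O}_k$ whatsoever, which is the cleanest way to see why the $L^\infty$ bound needs no chart hypothesis. You should either adopt one of these two fixes or supply a genuine uniform exterior density condition, which your current hypotheses do not yield.
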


\begin{proof}
Let us first prove that $w \in L^{\infty}(\mathbb{R}^d \setminus \mathcal{O})$. Fix $k \in \mathbb{Z}^d$ and recall that 
\begin{equation}
\label{soussol}
\begin{cases}
\begin{aligned}
- \Delta w & = 1 \ \mathrm{in} \ Q_k \setminus \overline{\mathcal{O}_k} \\
w_{|\partial \mathcal{O}_k} & = 0.
\end{aligned}
\end{cases}
\end{equation}
There exists a constant $C$ independent of $k$ such that 
\begin{equation} 
\label{unifbord}
\| w \|_{L^{\infty}(\partial Q_k)} \leq C.
\end{equation}
Proving \eqref{unifbord} is equivalent to prove that $\| \widetilde{w} \|_{L^{\infty}(\partial Q_k)} \leq C$. Lemma \ref{H2} implies that there exists $\delta > 0$ such that for all $k \in \mathbb{Z}^d$, $\overline{\mathcal{O}_k \cup \mathcal{O}_k^{\mathrm{per}}} \subset [k+\delta,k+1-\delta]^d$. By translation invariance and since $\partial Q$ is compact, there exists $x_1,x_2,...,x_{\ell} \in \partial Q$ such that 
\begin{equation}
\label{boules}
\forall k \in \mathbb{Z}^d, \ \partial Q_k \subset \bigcup_{i=1}^{\ell} B(x_i + k, \delta/2). 
\end{equation}

On each ball $B(x_i + k, \delta)$, $\widetilde{w}$ satisfies $- \Delta \widetilde{w}=  0$. De Giorgi-Nash-Moser Theory (see \cite{zhong}, Theorem 4.22, p. 155) implies that there exists a constant $C = C(d,\delta)$ independent of $x_i$ and $k$ such that
\begin{equation}
\label{Giorgi}
\sup\limits_{ B(x_i + k, \delta/2)} |\widetilde{w}| \leq C(d,\delta) \left( \int_{B(x_i + k,\delta)} |\widetilde{w}(x)|^2 \mathrm{d}x \right)^{\frac{1}{2}} \leq C \| \widetilde{w} \|_{L^2(\mathbb{R}^d \setminus \overline{\mathcal{O}})}.
\end{equation}
The inclusion \eqref{boules} together with \eqref{Giorgi} proves \eqref{unifbord}. We now apply the Maximum
principle on $w$ for each domain $Q_k \setminus \overline{\mathcal{O}_k}$. Let $R$ be such that $Q_k \subset B(k,R)$. The functions
$$w^+(x) := w(x) + \frac{|x-k|^2}{2d} + \| w \|_{L^{\infty}(\partial Q_k)} \ \ \ \mathrm{and} \ \ \ w^-(x) = w(x) + \frac{|x-k|^2 - R^2}{2d} - \| w \|_{L^{\infty}(\partial Q_k)} $$
are respectively supersolution and subsolution of \eqref{soussol}. Thus, thanks to \eqref{unifbord}, $\| w
\|_{L^{\infty}(Q_k \setminus \mathcal{O}_k)}$ is bounded independently of $k$. Hence $w\in L^\infty\left(\RR^d\setminus
\overline{\mathcal O}\right)$.

For $\nabla w$, we use H\"older Regularity results for the first derivatives. First recall that Assumption
\textbf{(A1)} implies that $\mathbb{R}^d \setminus \overline{\mathcal{O}}$ is connected. For all $x \in
\mathbb{R}^d \setminus \overline{\mathcal{O}}$ such that $\mathrm{dist}(x,\partial \mathcal{O})> \delta/2$, there
exists a ball $B_x$ centered at $x$ such that $\mathrm{dist}(B_x,\partial \mathcal{O}) = \delta/2$. Interior
estimates (see \cite{GT}, Theorem 8.32, p. 210) give the existence of a constant $C = C(\delta,d)$ independent of
$x$ such that 
$$\| w \|_{C^{1,\gamma}(B_x)} \leq C \left( \| w \|_{L^{\infty}(\mathbb{R}^d \setminus \mathcal{O})} + 1 \right) \leq C.$$

We have proved that $\nabla w$ is bounded at a distance $\delta/2$ of $\partial \mathcal{O}$.

For the proof up to the boundary $\partial \mathcal{O}$, we use Corollary 8.36 p. 212 of \cite{GT} with the sets $\Omega_k = \{x \ \mathrm{s.t} \ \mathrm{dist}(x,\partial \mathcal{O}_k) < \delta \} \setminus \overline{\mathcal{O}_k}$, $\Omega'_k = \{x \ \mathrm{s.t} \ \mathrm{dist}(x,\partial \mathcal{O}_k) < \delta/2 \} \setminus \overline{\mathcal{O}_k}$ and $T_k = \partial \mathcal{O}_k$. We have $d' = \delta/2$ which is independent of $k$ and thus
$$\| w \|_{C^{1,\gamma}(\Omega'_k)} \leq C(T_k,\delta,d) \left( \| w \|_{L^{\infty}(\mathbb{R}^d \setminus \mathcal{O})} + 1 \right)$$
where the dependence on $T_k$ appears through the $C^{1,\gamma}-$norms of the charts that flatten $T_k$ (see
\cite{GT}, p.210). By hypothesis, we get that $C(T_k) \leq C_0$. This concludes the proof.
\end{proof}

\begin{proof}
[Proof of Theorem \ref{theoinfini}] Fix $\varepsilon > 0$ and define $v_{\varepsilon} = u_{\varepsilon}(\varepsilon \cdot)$. Then $v_{\varepsilon} \in H^1_0(\frac{1}{\varepsilon}\Omega_{\varepsilon})$ and satisfies
\begin{equation}
\label{PBsup}
\begin{cases}
\begin{aligned}
- \Delta v_{\varepsilon} & = \varepsilon^2 f(\varepsilon \cdot) \ \mathrm{in} \ \frac{1}{\varepsilon}\Omega_{\varepsilon} \\
v_{\varepsilon} & = 0 \ \mathrm{on} \ \partial \left( \frac{1}{\varepsilon} \Omega_{\varepsilon} \right).
\end{aligned}
\end{cases}
\end{equation}
Define
$\displaystyle\psi_{\varepsilon} := v_{\varepsilon} - \varepsilon^2 w f(\varepsilon \cdot) \in
H^1_0(\frac{1}{\varepsilon}\Omega_{\varepsilon}),$ and note that 
$$-\Delta \psi_{\varepsilon} = \varepsilon^3 \left[ 2 \nabla w \cdot \nabla f (\varepsilon \cdot) + \varepsilon w \Delta f (\varepsilon \cdot) \right] =: \varepsilon^3 h_{\varepsilon}.$$
Lemma \ref{lem14} and the fact that $f \in \mathcal{D}(\Omega)$ imply that $\|h_{\varepsilon} \|_{L^{\infty}(\frac{1}{\varepsilon} \Omega_{\varepsilon})} \leq C$ for all $0<\varepsilon <1$. Define
$$\psi_{\varepsilon}^{+} = \psi_{\varepsilon} + \varepsilon^3 \|h_{\varepsilon} \|_{L^{\infty}(\frac{1}{\varepsilon} \Omega_{\varepsilon})} \left( w + \|w\|_{L^{\infty}} \right).$$
Then $\psi_{\varepsilon}^{+}$ is a supersolution of Problem \eqref{PBsup}. Thus, by the weak maximum principle (see \cite{GT} Theorem~8.1, p.179), one gets that $\psi_{\varepsilon}^{+} \geq 0$ on $\frac{1}{\varepsilon} \Omega_{\varepsilon}$. Similarly,
$$\psi_{\varepsilon}^{-} = \psi_{\varepsilon} - \varepsilon^3 \|h_{\varepsilon} \|_{L^{\infty}(\frac{1}{\varepsilon} \Omega_{\varepsilon})} \left( w + \|w\|_{L^{\infty}} \right)$$
is a subsolution of \eqref{PBsup} and thus $\psi_{\varepsilon}^{-} \leq 0$ on $\frac{1}{\varepsilon} \Omega_{\varepsilon}$. Finally,
$$- \varepsilon^3 \|h_{\varepsilon} \|_{L^{\infty}(\frac{1}{\varepsilon} \Omega_{\varepsilon})} \left( w + \|w\|_{L^{\infty}} \right) \leq \psi_{\varepsilon} \leq \varepsilon^3 \|h_{\varepsilon} \|_{L^{\infty}(\frac{1}{\varepsilon} \Omega_{\varepsilon})} \left( w + \|w\|_{L^{\infty}} \right).$$ The bound $\|h_{\varepsilon} \|_{L^{\infty}(\frac{1}{\varepsilon} \Omega_{\varepsilon})} \leq C$ and Lemma \ref{lem14} imply $\| \psi_{\varepsilon} \|_{L^{\infty}(\frac{1}{\varepsilon}\Omega_{\varepsilon})} \leq C \varepsilon^3$. Rescaling back concludes the proof.
\end{proof}


\appendix

\section{Proof of technical lemmas}
\label{sec:appendix}

\begin{lemme}
\label{H1}
Let $(\mathcal{O}_k)_{k \in \mathbb{Z}^d}$ be a sequence of open sets satisfying Assumption \textbf{(A2)}. Then,
\begin{equation}
\label{hyp}
\sum_{k \in \mathbb{Z}^d} |\mathcal{O}_k \Delta \mathcal{O}_k^{\mathrm{per}}  | < + \infty,
\end{equation}
where $A \Delta B = (A \cup B) \setminus (A \cap B) = (A \setminus B) \cup (B \setminus A)$ stands for the symmetric subset difference.

Moreover, if
\begin{displaymath}
\mathcal{K} := \{ k \in \mathbb{Z}^d \ \ \mathrm{s.t} \ \ \mathcal{O}_k \cap \mathcal{O}_k^{\mathrm{per}} = \emptyset \},  
\end{displaymath}
then $|\mathcal{K}| < + \infty$.
\end{lemme}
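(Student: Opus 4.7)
My plan is to prove the two assertions separately, exploiting Assumption \textbf{(A2)} together with the $C^{1,\gamma}$ regularity of $\mathcal{O}_0^{\mathrm{per}}$ and the fact that any $\ell^1$ sequence of nonnegative reals tends to zero.

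For the summability \eqref{hyp}, I would first observe that \textbf{(A2)} forces the symmetric difference to sit inside a tubular neighbourhood of $\partial\mathcal{O}_k^{\mathrm{per}}$: from $\mathcal{O}_k^{\mathrm{per},-}(\alpha_k)\subset \mathcal{O}_k\subset \mathcal{O}_k^{\mathrm{per},+}(\alpha_k)$ one deduces that both $\mathcal{O}_k\setminus\mathcal{O}_k^{\mathrm{per}}$ and $\mathcal{O}_k^{\mathrm{per}}\setminus\mathcal{O}_k$ are contained in
\[
\mathcal{U}_k^{\mathrm{per}}(\alpha_k)=\mathcal{O}_k^{\mathrm{per},+}(\alpha_k)\setminus\overline{\mathcal{O}_k^{\mathrm{per},-}(\alpha_k)},
\]
so that $|\mathcal{O}_k\Delta\mathcal{O}_k^{\mathrm{per}}|\leq|\mathcal{U}_k^{\mathrm{per}}(\alpha_k)|$. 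By translation invariance $|\mathcal{U}_k^{\mathrm{per}}(\alpha_k)|=|\mathcal{U}_0^{\mathrm{per}}(\alpha_k)|$.

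Next I would bound this Minkowski content. Since $(\alpha_k)\in\ell^1$ we have $\alpha_k\to 0$, so for $|k|$ large enough $\alpha_k$ is smaller than any fixed positive constant. Because $\partial\mathcal{O}_0^{\mathrm{per}}$ is a compact $C^{1,\gamma}$ hypersurface, a standard tubular neighbourhood argument (or the Weyl tube formula / a direct parametrisation by the normal bundle near the boundary) yields a constant $C$ depending only on $\partial\mathcal{O}_0^{\mathrm{per}}$ such that
\begin{equation}
\label{Mesth}
|\mathcal{U}_0^{\mathrm{per}}(\alpha)|\leq C\,\alpha\qquad\text{for all sufficiently small }\alpha>0.
\end{equation}
For the finitely many remaining $k$ one uses the trivial bound $|\mathcal{U}_0^{\mathrm{per}}(\alpha_k)|\leq|B(0,\alpha_k+\mathrm{diam}\,\mathcal{O}_0^{\mathrm{per}})|$, which is finite. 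Summing gives
\[
\sum_{k\in\mathbb{Z}^d}|\mathcal{O}_k\Delta\mathcal{O}_k^{\mathrm{per}}|\;\leq\;\text{(finite part)}\;+\;C\sum_{k}\alpha_k\;<\;+\infty,
\]
which is \eqref{hyp}.

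For the finiteness of $\mathcal{K}$, the key observation is that if $k\in\mathcal{K}$ then $\mathcal{O}_k^{\mathrm{per},-}(\alpha_k)\subset\mathcal{O}_k\cap\mathcal{O}_k^{\mathrm{per}}=\emptyset$, so by \eqref{Omoins} the set $\{x\in\mathcal{O}_0^{\mathrm{per}}:\mathrm{dist}(x,\partial\mathcal{O}_0^{\mathrm{per}})>\alpha_k\}$ is empty. This forces $\alpha_k\geq\rho_0:=\sup_{x\in\mathcal{O}_0^{\mathrm{per}}}\mathrm{dist}(x,\partial\mathcal{O}_0^{\mathrm{per}})>0$ (the inradius of $\mathcal{O}_0^{\mathrm{per}}$, which is strictly positive since $\mathcal{O}_0^{\mathrm{per}}$ is a nonempty open set). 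Since $(\alpha_k)\in\ell^1(\mathbb{Z}^d)$, the terms $\alpha_k$ tend to $0$, hence only finitely many indices can satisfy $\alpha_k\geq\rho_0$. Therefore $|\mathcal{K}|<+\infty$.

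The only slightly delicate point in the plan is establishing the linear-in-$\alpha$ upper bound \eqref{Mesth} for the Minkowski content of $\partial\mathcal{O}_0^{\mathrm{per}}$; this is where the $C^{1,\gamma}$ regularity assumption is essential, and it will provide the right scaling to conclude absolute summability from $(\alpha_k)\in\ell^1$. Everything else is bookkeeping with the definitions \eqref{21}--\eqref{Omoins} and \eqref{HYP}.
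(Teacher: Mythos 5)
Your argument is correct. For the summability \eqref{hyp} it follows essentially the paper's route: both pieces of the symmetric difference are trapped inside $\mathcal{U}_k^{\mathrm{per}}(\alpha_k)$ via \eqref{HYP}, translation invariance reduces everything to $\mathcal{U}_0^{\mathrm{per}}(\alpha)$, and a linear-in-$\alpha$ bound on the measure of this tubular neighbourhood for small $\alpha$ gives the conclusion, the finitely many indices with large $\alpha_k$ being handled trivially. The paper gets the linear bound by citing Federer's theorem on Minkowski content rather than a tube formula; that citation is the cleaner way to justify the step you rightly flag as the delicate one, since for a boundary that is merely $C^{1,\gamma}$ the normal map need not be injective on a tube of uniform width, whereas the Minkowski-content limit (or a covering by local graphs) requires no such injectivity.

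The one genuine divergence is the finiteness of $\mathcal{K}$. The paper deduces it directly from \eqref{hyp}: if $\mathcal{O}_k\cap\mathcal{O}_k^{\mathrm{per}}=\emptyset$ then $|\mathcal{O}_k\Delta\mathcal{O}_k^{\mathrm{per}}|\geq|\mathcal{O}_k^{\mathrm{per}}\setminus\mathcal{O}_k|=|\mathcal{O}_0^{\mathrm{per}}|>0$, and a convergent series of nonnegative terms can have only finitely many terms above a fixed positive threshold. You instead argue geometrically: $k\in\mathcal{K}$ forces $\mathcal{O}_k^{\mathrm{per},-}(\alpha_k)=\emptyset$ by \eqref{Omoins}, hence $\alpha_k\geq\rho_0$ with $\rho_0>0$ the inradius of $\mathcal{O}_0^{\mathrm{per}}$, and $\alpha_k\to 0$ rules out all but finitely many such $k$. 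Both arguments are valid and of comparable length; yours does not need \eqref{hyp} at all (only that $(\alpha_k)\in\ell^1$ implies $\alpha_k\to 0$), while the paper's reuses the estimate it has just proved and avoids introducing the inradius. Either way the implicit assumption $\mathcal{O}_0^{\mathrm{per}}\neq\emptyset$ is harmless.
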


\begin{proof}
First note that, using \eqref{HYP},  
\begin{equation}
\label{lemme}
\mathcal{O}_k \setminus \mathcal{O}_k^{\mathrm{per}} \subset \mathcal{U}_k^{\mathrm{per}}(\alpha_k) \ \ \ \mathrm{and} \ \ \ \mathcal{O}_k^{\mathrm{per}} \setminus \mathcal{O}_k \subset \mathcal{U}_k^{\mathrm{per}}(\alpha_k).
\end{equation}
We now use \cite[Theorem 3.2.39]{federer} to control the measure of $\mathcal{U}_k^{\mathrm{per}}(\alpha_k)$: there exists $\overline{\alpha} > 0$ such that
\begin{equation}
\label{Mesth}
\forall \alpha < \overline{\alpha}, \ \left| \mathcal{U}_0^{\mathrm{per}}(\alpha) \right| \leq 2 C |\partial \mathcal{O}_0^{\mathrm{per}} | \alpha.
\end{equation} 
By translation invariance, the above assertion is true for $\mathcal{U}_0^{\mathrm{per}}(\alpha)$ replaced by $\mathcal{U}_k^{\mathrm{per}}(\alpha)$ : 
$$\forall k \in \mathbb{Z}^d, \ \forall \alpha < \overline{\alpha}, \ \left| \mathcal{U}_k^{\mathrm{per}}(\alpha) \right| \leq 2 C |\partial \mathcal{O}_0^{\mathrm{per}} | \alpha.$$
 For $k$ large enough such that $\alpha_k < \overline{\alpha}$, one thus have
$|\mathcal{U}_k^{\mathrm{per}}(\alpha_k)| \leq \widetilde{C} \alpha_k$ where $\widetilde{C}$ is a constant. This, together with \eqref{lemme}, proves the \eqref{hyp}.

The fact that $|\mathcal{K}| < + \infty$ is a direct consequence of \eqref{hyp} and of the fact that for all $k\in
\mathcal K$,
$\displaystyle \ |\mathcal{O}_k \Delta \mathcal{O}_k^{\mathrm{per}}| \geq |\mathcal{O}_k^{\mathrm{per}} \setminus \mathcal{O}_k| = | \mathcal{O}_0^{\mathrm{per}}|.$
\end{proof}


\begin{lemme}
\label{H3}
Let $(\mathcal{O}_k)_{k \in \mathbb{Z}^d}$ be a sequence of open sets satisfying Assumption \textbf{(A2)}. There exists $\rho > 0$ such that
$$
\forall k \in \mathcal{K}^c, \ \exists B_k \ \mathrm{s.t} \ |B_k| \geq \rho \ \mathrm{and} \ B_k \subset \mathcal{O}_k \cap \mathcal{O}_k^{\mathrm{per}},
$$
where $B_k$ denotes an open ball and $\mathcal{K}$ is defined in Lemma \ref{H1}.
\end{lemme}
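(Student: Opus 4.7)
The proof will proceed by splitting the indices into a cofinite family of ``well-behaved'' cells (where $\alpha_k$ is small enough that the reduction $\mathcal{O}_k^{\mathrm{per},-}(\alpha_k)$ is large), and a finite remainder where we argue case-by-case.

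\medskip

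The key observation is that $\mathcal{O}_0^{\mathrm{per}}$ is a nonempty open set, so it contains some open ball $B(x_0, r_0)$ with $r_0 > 0$. A simple triangle-inequality argument then shows that for any $\alpha < r_0/2$,
\[
  B(x_0, r_0/2) \subset \mathcal{O}_0^{\mathrm{per},-}(\alpha),
\]
because every $y \in B(x_0, r_0/2)$ satisfies $\mathrm{dist}(y,\partial \mathcal{O}_0^{\mathrm{per}}) \geq r_0 - r_0/2 = r_0/2 > \alpha$ (using \eqref{Omoins}). By translation invariance, $B(x_0+k, r_0/2) \subset \mathcal{O}_k^{\mathrm{per},-}(\alpha_k)$ whenever $\alpha_k < r_0/2$. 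Applying Assumption \textbf{(A2)} then gives $B(x_0+k, r_0/2) \subset \mathcal{O}_k$, and since this ball also sits in $\mathcal{O}_k^{\mathrm{per}}$, we obtain
\[
  B(x_0+k, r_0/2) \subset \mathcal{O}_k \cap \mathcal{O}_k^{\mathrm{per}}
\]
with the uniform volume $|B(0, r_0/2)|$. This handles all $k$ with $\alpha_k < r_0/2$ (whether or not $k \in \mathcal{K}^c$).

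\medskip

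To deal with the remaining indices, I note that since $(\alpha_k)_{k\in\mathbb{Z}^d} \in \ell^1(\mathbb{Z}^d)$, the set $\mathcal{F} := \{k \in \mathbb{Z}^d : \alpha_k \geq r_0/2\}$ is finite. Among these, pick any $k \in \mathcal{F} \cap \mathcal{K}^c$. By definition of $\mathcal{K}^c$, the set $\mathcal{O}_k \cap \mathcal{O}_k^{\mathrm{per}}$ is nonempty and open (as a finite intersection of open sets), so it contains some open ball $B_k'$ with $|B_k'| > 0$. Since $\mathcal{F} \cap \mathcal{K}^c$ is finite, the quantity
\[
  \rho := \min\Bigl( |B(0,r_0/2)|,\; \min_{k \in \mathcal{F} \cap \mathcal{K}^c} |B_k'| \Bigr)
\]
is strictly positive, and by construction, for every $k \in \mathcal{K}^c$ there is a ball $B_k \subset \mathcal{O}_k \cap \mathcal{O}_k^{\mathrm{per}}$ with $|B_k| \geq \rho$.

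\medskip

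There is no serious obstacle here: the whole argument rests on the fact that $(\alpha_k) \in \ell^1$ forces $\alpha_k \to 0$ (giving a cofinite family handled uniformly by the fixed ball inside $\mathcal{O}_0^{\mathrm{per}}$), and that the finitely many exceptional cells in $\mathcal{K}^c$ can each be handled individually by openness. The only mild subtlety is remembering that we need $k \in \mathcal{K}^c$ for the finitely many exceptional indices, as otherwise $\mathcal{O}_k \cap \mathcal{O}_k^{\mathrm{per}}$ could be empty.
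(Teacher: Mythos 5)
Your proof is correct and follows essentially the same route as the paper's: a fixed ball inside $\mathcal{O}_0^{\mathrm{per}}$, shrunk to keep a uniform margin from $\partial\mathcal{O}_0^{\mathrm{per}}$, is translated to each cell and placed inside $\mathcal{O}_k^{\mathrm{per},-}(\alpha_k)\subset\mathcal{O}_k$ via \eqref{Omoins} and \textbf{(A2)} for all but finitely many $k$, while the finitely many remaining indices of $\mathcal{K}^c$ are handled by openness of the nonempty intersection. The only cosmetic difference is that you parametrize the exceptional set as $\{k:\alpha_k\geq r_0/2\}$ (finite by $\ell^1$ summability) where the paper uses $\{|k|<k_0\}$; these are interchangeable.
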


\begin{proof}
Since $\mathcal{O}_0^{\mathrm{per}}$ is open, it contains a ball $B \subset \overline{B} \subset \mathcal{O}_0^{\mathrm{per}}$. One has $\delta := \mathrm{dist}(\overline{B},\partial \mathcal{O}_0^{\mathrm{per}}) > 0$.

By translation invariance, for all $k \in \mathbb{Z}^d$, $B_k := B + k$ satisfies 
$$B_k \subset \overline{B_k} \subset \mathcal{O}_k^{\mathrm{per}} \ \ \ \mathrm{and} \ \ \ \delta = \mathrm{dist}(\overline{B_k}, \partial \mathcal{O}_k^{\mathrm{per}}).$$

Since $(\alpha_k)_{k \in \mathbb{Z}^d} \in \ell^1(\mathbb{Z}^d)$, there exists $k_0$ such that for all $|k| \geq k_0$, $\alpha_k \leq \delta/2$. Equation \eqref{Omoins} implies $\overline{B_k} \subset \mathcal{O}_k^{\mathrm{per},-}(\alpha_k)$ for $|k| \geq k_0$. This proves that
 $$ \forall |k| \geq k_0, \ \ \overline{B_k} \subset \mathcal{O}_k^{\mathrm{per}} \cap \mathcal{O}_k.$$

If $|k| < k_0$ and $\mathcal{O}_k^{\mathrm{per}} \cap \mathcal{O}_k \neq \emptyset$, there exists a ball $B_k$ such that $$B_k \subset \mathcal{O}_k^{\mathrm{per}} \cap \mathcal{O}_k.$$

Defining $\displaystyle\rho =\min \left( \min\limits_{|k| < k_0} |B_k|, |B| \right) > 0$ concludes the proof.
\end{proof}

\begin{lemme}
\label{H2}
Let $(\mathcal{O}_k)_{k \in \mathbb{Z}^d}$ be a sequence of open sets satisfying Assumptions \textbf{(A1)-(A2)}. There exists $\delta_0 > 0$ such that $$\forall k \in \mathbb{Z}^d, \ \mathrm{dist}(\mathcal{O}_k, \partial Q_k) \geq \delta_0.$$
\end{lemme}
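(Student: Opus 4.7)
The plan is to split the index set into a finite "exceptional" part and a "tail" on which $\alpha_k$ is small, and bound $\mathrm{dist}(\mathcal O_k,\partial Q_k)$ uniformly on each.

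First, set $\delta_0^{\mathrm{per}} := \mathrm{dist}(\mathcal O_0^{\mathrm{per}}, \partial Q) > 0$, which is strictly positive because by construction $\mathcal O_0^{\mathrm{per}} \subset\subset Q$. By translation invariance, $\mathrm{dist}(\mathcal O_k^{\mathrm{per}}, \partial Q_k) = \delta_0^{\mathrm{per}}$ for every $k\in\ZZ^d$. From $(\alpha_k)\in\ell^1(\ZZ^d)$ we have in particular $\alpha_k\to 0$, so there exists $k_0$ such that $\alpha_k \leq \delta_0^{\mathrm{per}}/2$ for all $|k|\geq k_0$.

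For such $k$, Assumption \textbf{(A2)} gives $\mathcal O_k \subset \mathcal O_k^{\mathrm{per},+}(\alpha_k)$, i.e.\ every $x\in\mathcal O_k$ is at distance less than $\alpha_k$ from $\mathcal O_k^{\mathrm{per}}$ (by \eqref{21}). Combining with the triangle inequality yields
$$\mathrm{dist}(\mathcal O_k,\partial Q_k) \geq \mathrm{dist}(\mathcal O_k^{\mathrm{per}},\partial Q_k) - \alpha_k \geq \delta_0^{\mathrm{per}} - \alpha_k \geq \frac{\delta_0^{\mathrm{per}}}{2}.$$
This handles all but finitely many indices at once.

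For the finitely many $k$ with $|k|<k_0$, Assumption \textbf{(A1)} asserts $\mathcal O_k\subset\subset Q_k$, hence $\overline{\mathcal O_k}$ is a compact subset of the open set $Q_k$, so $\mathrm{dist}(\mathcal O_k,\partial Q_k) > 0$ individually. Setting
$$\delta_0 := \min\left(\frac{\delta_0^{\mathrm{per}}}{2},\ \min_{|k|<k_0}\mathrm{dist}(\mathcal O_k,\partial Q_k)\right) > 0$$
(the second minimum being a minimum of finitely many strictly positive numbers, hence strictly positive) concludes the proof. There is no real obstacle here; the only mildly delicate point is that Assumption \textbf{(A2)} alone does not give a uniform bound (because $\alpha_k$ can be arbitrarily large for small $|k|$), which is why Assumption \textbf{(A1)} is needed to cover the finite exceptional set.
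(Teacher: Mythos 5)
Your proof is correct and follows essentially the same route as the paper's: the same split into the tail $|k|\geq k_0$ (handled via $\mathcal O_k\subset\mathcal O_k^{\mathrm{per},+}(\alpha_k)$ and $\alpha_k\to 0$) and the finite exceptional set (handled via \textbf{(A1)}), with the same final minimum defining $\delta_0$. Your closing remark correctly identifies why both assumptions are needed, which the paper leaves implicit.
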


\begin{proof}
Recall that for all $k \in \mathbb{Z}^d$, $\mathcal{O}_k^{\mathrm{per}} \subset \subset Q_k$. Thus, by translation invariance, there exists a constant $\delta_0^{\mathrm{per}} > 0$ independent of $k$ such that 
$$\forall k \in \mathbb{Z}^d, \ \mathrm{dist}(\mathcal{O}_k^{\mathrm{per}}, \partial Q_k) = \delta_0^{\mathrm{per}}.$$ One has, using Assumption \textbf{(A2)} and in particular the inclusion $\mathcal{O}_k \subset \mathcal{O}^{\mathrm{per},+}_k(\alpha_k)$,
$$\delta_k := \mathrm{dist}(\mathcal{O}_k, \partial Q_k) \geq \delta_0^{\mathrm{per}} - \alpha_k \geq \delta_0^{\mathrm{per}}/2$$ for $k$ large enough, say $|k| \geq k_0$. 

Since for all $|k| < k_0$, Assumption \textbf{(A1)} gives
 $$\delta_k = \mathrm{dist}(\mathcal{O}_k, \partial Q_k) > 0,$$ the Lemma is proved by defining $\delta_0 := \min\left( \frac{\delta_0^{\mathrm{per}}}{2}, \min\limits_{|k| < k_0} \delta_k \right)$. 
\end{proof}

\section*{Acknowledgments}

We thank C. Le Bris for comments and suggestions that greatly improved the manuscript.

  \bibliographystyle{plain}
  \bibliography{article}

\end{document}